\documentclass[reqno,12pt,a4paper]{amsart}

\usepackage{tikz}
\usetikzlibrary{matrix,arrows,calc,snakes,patterns,decorations.markings}

\usepackage[mathscr]{eucal}
\usepackage{graphics,epic}
\usepackage{amsfonts}
\usepackage{amscd}
\usepackage{latexsym}
\usepackage{amsmath,amssymb, amsthm, stmaryrd, bm, bbm}
\usepackage[all,2cell]{xy}
\usepackage{mathrsfs}

\setcounter{tocdepth}{1}

\textwidth15.1cm \textheight22.7cm \headheight12pt
\oddsidemargin.4cm \evensidemargin.4cm \topmargin0cm

\newtheorem{theorem}{Theorem}[section]
\newtheorem*{theorem*}{Theorem}

\newtheorem{lemma}[theorem]{Lemma}
\newtheorem{proposition}[theorem]{Proposition}
\newtheorem{corollary}[theorem]{Corollary}

\newtheorem*{conjecture*}{Conjecture}

\newtheorem*{question*}{Question}
\theoremstyle{remark}
\newtheorem{remark}[theorem]{Remark}
\newtheorem{example}[theorem]{Example}
\theoremstyle{definition}



\newcommand{\MCM}{\opname{MCM}\nolimits}


\newcommand{\ie}{{\em i.e.~}\ }

\newcommand{\eg}{{\em e.g.~}\ }

\newcommand{\ul}[1]{\underline{#1}}

\newcommand{\opname}[1]{\operatorname{\mathsf{#1}}}

\renewcommand{\mod}{\opname{mod}\nolimits}

\newcommand{\proj}{\opname{proj}\nolimits}

\newcommand{\Mod}{\opname{Mod}\nolimits}

\newcommand{\Grmod}{\opname{Grmod}\nolimits}
\newcommand{\add}{\opname{add}\nolimits}

\renewcommand{\Im}{\opname{Im}\nolimits}

\newcommand{\rad}{\opname{rad}\nolimits}
\newcommand{\con}{\opname{cone}\nolimits}

\newcommand{\grproj}{\opname{grproj}}

\newcommand{\thick}{\opname{thick}\nolimits}
\newcommand{\Tria}{\opname{Tria}\nolimits}
\newcommand{\Loc}{\opname{Loc}\nolimits}
\newcommand{\per}{\opname{per}\nolimits}

\newcommand{\Z}{\mathbb{Z}}

\renewcommand{\P}{\mathbb{P}}

\newcommand{\ra}{\rightarrow}

\newcommand{\id}{\mathrm{id}}

%
%
\newcommand{\Hom}{\opname{Hom}}
\newcommand{\End}{\opname{End}}

\newcommand{\RHom}{\mathbb{R}\hspace{-2pt}\opname{Hom}}

\newcommand{\cHom}{\mathcal{H}\it{om}}
\newcommand{\cEnd}{\mathcal{E}\it{nd}}

\newcommand{\Ext}{\opname{Ext}}

\newcommand{\ten}{\otimes}
\newcommand{\lten}{\overset{\mathbb{L}}{\ten}}

\newcommand{\Tot}{\opname{Tot}}

%
%
\newcommand{\ca}{{\mathcal A}}

\newcommand{\cc}{{\mathcal C}}
\newcommand{\cd}{{\mathcal D}}
\newcommand{\ce}{{\mathcal E}}

\newcommand{\ch}{{\mathcal H}}

\newcommand{\cp}{{\mathcal P}}

\newcommand{\cs}{{\mathcal S}}
\newcommand{\ct}{{\mathcal T}}

\newcommand{\m}{\mathfrak{m}}
\newcommand{\del}{\partial}

\setcounter{page}{1}

\numberwithin{equation}{section}

\begin{document}

\title[Relative singularity categories II]{Relative singularity categories II: \\[1ex] DG models}

\author{Martin Kalck}
\thanks{M.K. was supported by the DFG grant Bu--1866/2--1 and ERSRC grant EP/L017962/1.}
\address{~~
}

\email{martin.maths@posteo.de}
\author{Dong Yang}
\thanks{D.Y. was supported by the National Science Foundation in China No. 11401297 and the DFG program SPP 1388 (YA297/1-1 and KO1281/9-1).}
\address{Dong Yang, Department of Mathematics, Nanjing University, Nanjing 210093, P. R. China}
\email{yangdong@nju.edu.cn}

\date{Last modified on \today}
\begin{abstract}
We study the relationship between singularity categories and relative singularity categories and discuss constructions of differential graded algebras of relative singularity categories. As consequences, we obtain structural results, which are known or generalise known results, on singularity categories of algebras with radical square zero, of non-commutative deformations of Kleinian singularities, of $SL_3(\mathbb{C})$-quotient singularities and of Gorenstein toric threefolds.\\
{\bf Keywords:} relative singularity category, singularity category, dg algebra, cluster category, quotient singularity\\
{\bf MSC 2010:} 14B05, 16E45, 18E30 
\end{abstract}

\maketitle

\tableofcontents

\section{Introduction}

This is the second paper of a series of papers studying the relationship between relative singularity categories and singularity categories.

\smallskip
Let $k$ be a commutative ring and let $A$ be a $k$-algebra which is right noetherian as a ring. Let $e\in A$ be an idempotent and let $R=eAe$. We define the \emph{singulaity category of $A$ relative to $e$} as the triangle quotient
\[
\Delta_e(A)=\ch^b(\proj A)/\thick(eA),
\]
where $\ch^b(\proj A)$ is the bounded homotopy category of finitely generated projective $A$-modules. The \emph{singularity category} of $R$ is defined as the triangle quotient
\[
\cd_{sg}(R)=\cd^b(\mod R)/\ch^b(\proj R),
\]
where $\cd^b(\mod R)$ is the bounded derived category of finitely generated $R$-modules. It turns out that $\cd_{sg}(R)$ is a canonical triangle quotient of $\Delta_e(A)$  (Corollary~\ref{cor:relative-singularity-category-vs-singularity-category}).
rhb
\begin{theorem}\label{thm:main-compare}
Assume that $A$ is flat over $k$ and that $A$ has finite global dimension. Then there is a triangle equivalence up to direct summands
\[
\cd_{sg}(R)\simeq\Delta_e(A)/\Delta_e(A)_{rhb},
\]
where $\Delta_e(A)_{rhb}$ is the full subcategory of $\Delta_e(A)$ consisting of objects $X$ such that for any $Y\in\Delta_e(A)$ the space $\Hom(Y,\Sigma^p X)$ vanishes for almost all $p\in\mathbb{Z}$.
\end{theorem}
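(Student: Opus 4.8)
The plan is to realise the quotient functor $Q\colon\Delta_e(A)\to\cd_{sg}(R)$ of Corollary~\ref{cor:relative-singularity-category-vs-singularity-category} explicitly and then to identify its kernel. Since $A$ is right noetherian and $\gldim A<\infty$ we have $\ch^b(\proj A)=\cd^b(\mod A)=\per A$, so $\Delta_e(A)=\per A/\thick(eA)$. The exact functor $j^{*}:=(-)e=\Hom_A(eA,-)\colon\cd^b(\mod A)\to\cd^b(\mod R)$ sends $eA$ to $eAe=R$ and restricts to an equivalence $\thick(eA)\xrightarrow{\ \sim\ }\per R$; composed with $\cd^b(\mod R)\to\cd_{sg}(R)$ it annihilates $\thick(eA)$ and hence descends to $Q$, which by the cited Corollary is a triangle quotient functor up to direct summands. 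Thus it suffices to prove that $\ker Q=\{\,\ol X\in\Delta_e(A):Xe\in\per R\,\}$ (the description being well defined, and valid, because $\per A=\cd^b(\mod A)$) coincides with $\Delta_e(A)_{rhb}$; the asserted equivalence then follows by Verdier localisation.

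For a fixed $\ol X$ the objects $Y$ with $\Hom_{\Delta_e(A)}(Y,\Sigma^p\ol X)=0$ for almost all $p$ form a thick subcategory, and $\Delta_e(A)=\thick(\ol A)$; hence $\ol X\in\Delta_e(A)_{rhb}$ iff $\Hom_{\Delta_e(A)}(\ol A,\Sigma^p\ol X)=0$ for almost all $p$. To compute this, pass to the compactly generated category $\cd(\Mod A)$. The localising subcategory $\Loc(eA)$ is smashing (generated by the compact object $eA$), with right orthogonal $\{M:Me=0\}$, and for $X\in\per A$ the associated colocalisation triangle is
\[
Xe\lten_R eA\longrightarrow X\longrightarrow\Cone\big(Xe\lten_R eA\to X\big)\longrightarrow ,
\]
because $Xe\lten_R eA=j_!j^{*}X$ (where $j_!:=-\lten_R eA$ is left adjoint to $j^{*}$, with $j^{*}j_!\cong\id$) lies in $\Loc(eA)$, while applying $(-)e$ to the cone gives $0$. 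Since $\per A/\thick(eA)\hookrightarrow\cd(\Mod A)/\Loc(eA)$ is fully faithful (Neeman--Thomason) and the target identifies with $\Loc(eA)^{\perp}$, we obtain
\[
\Hom_{\Delta_e(A)}(\ol A,\Sigma^p\ol X)\ \cong\ \Hom_{\cd(\Mod A)}\!\big(A,\Sigma^p\Cone(Xe\lten_R eA\to X)\big)\ =\ H^{p}\big(\Cone(Xe\lten_R eA\to X)\big).
\]
As $X\in\cd^b(\mod A)$ has bounded cohomology, this vanishes for almost all $p$ precisely when $Xe\lten_R eA$ has bounded cohomology.

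It remains to show that for $X\in\per A$ the complex $Xe\lten_R eA$ has bounded cohomology if and only if $Xe\in\per R$; this is where $\gldim A<\infty$ re-enters. If $Xe\in\per R$, then $Xe\lten_R eA=Xe\otimes_R eA$ is a bounded complex of finitely generated projective $A$-modules. Conversely, resolving $Xe\in\cd^b(\mod R)$ by a right-bounded complex of finitely generated projective $R$-modules and tensoring with $eA$ exhibits $Xe\lten_R eA$ as a right-bounded complex of finitely generated projective $A$-modules, all of whose cohomologies are finitely generated over $A$; so if in addition its cohomology is bounded, then $Xe\lten_R eA\in\cd^b(\mod A)=\per A$, i.e. $j_!(Xe)$ is compact in $\cd(\Mod A)$. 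Since $j_!$ is fully faithful and its right adjoint $j^{*}=(-)e$ preserves coproducts and admits a right adjoint $j_*$ with $j^{*}j_*\cong\id$, compactness of $j_!(Xe)$ forces compactness of $Xe$, that is, $Xe\in\per R$. Combining with the previous paragraph gives $\Delta_e(A)_{rhb}=\ker Q$, and the theorem follows.

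I expect the main obstacle, and the part requiring the most care to write out, to be the second paragraph: justifying the passage from $\Delta_e(A)$ to $\cd(\Mod A)/\Loc(eA)$, checking that $\Loc(eA)$ is smashing with the stated right orthogonal, identifying the colocalisation triangle with the displayed one, and invoking the Neeman--Thomason description of compact objects in a Verdier quotient. The reflection-of-compactness step in the last paragraph is short but is the conceptual heart, as it is exactly what converts ``bounded cohomology over $A$'' — available because $\gldim A<\infty$ — back into ``perfect over $R$''.
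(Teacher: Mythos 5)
The decisive step of your argument is assumed rather than proved, and the way you assume it is circular. You construct $Q\colon\Delta_e(A)\to\cd_{sg}(R)$ from $\Hom_A(eA,?)$ and then invoke ``the cited Corollary'' to know that $Q$ is a triangle quotient functor up to direct summands, so that the theorem reduces to computing $\ker Q$. But Corollary~\ref{cor:relative-singularity-category-vs-singularity-category} \emph{is} the body version of Theorem~\ref{thm:main-compare} (the introduction states the theorem precisely by pointing to that corollary), so it cannot be used in its own proof; and the property you borrow is exactly the hard content. One must show that the induced functor $\Delta_e(A)/\ker Q\to\cd_{sg}(R)$ is fully faithful and that every object of $\cd_{sg}(R)$ is a direct summand of an object in the image of $Q$; neither follows from merely observing that $?e$ kills $\thick(eA)$. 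In the paper this input is \cite[Proposition 3.3]{KalckYang16}, the equivalence $\bigl(\cd^b(\mod A)/\thick(eA)\bigr)/\thick(q(\mod A/AeA))\simeq\cd_{sg}(eAe)$ induced by $\Hom_A(eA,?)$, fed through Theorem~\ref{t:singularity-category-vs-relative-singularity-category} and the dg model of Proposition~\ref{p:recollement-from-projective-general-case}; nothing in your proposal replaces it. So, as written, there is a genuine gap.

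That said, your second and third paragraphs are essentially correct and genuinely different from the paper's handling of $\Delta_e(A)_{rhb}$: you identify it with $\{\ol X : Xe\in\per R\}$ by reducing to $Y=\ol A$, computing $\Hom_{\Delta_e(A)}(\ol A,\Sigma^p\ol X)$ as the cohomology of the localisation of $X$ at $\Loc(eA)$ (Bousfield localisation plus Neeman--Thomason), and then reflecting compactness along the fully faithful, coproduct-preserving $?\lten_R eA$ -- whereas the paper identifies $\Delta_e(A)_{rhb}$ with $\cd_{fg}(B)$ via Corollary~\ref{c:restriction-and-induction} and $\cd_{fg}(B)=\per(B)_{rhb}$. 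If you replace the circular citation by a direct appeal to \cite[Proposition 3.3]{KalckYang16} (using $\cd^b(\mod A)=\ch^b(\proj A)$, valid since $\gldim A<\infty$), then $\ker Q=\thick(q(\mod A/AeA))$ and your computation does close the argument, indeed without passing through $B$ at all; but that repair is needed before the proposal counts as a proof.
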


The category $\Delta_e(A)$ is triangle equivalent to the perfect derived category $\per(B)$ of differential graded modules over some nice differential graded $k$-algebra $B$ (Corollary~\ref{c:restriction-and-induction}). In terms of $B$, Theorem~\ref{thm:main-compare} has the following presentation (Theorem~\ref{t:singularity-category-vs-relative-singularity-category}). Notice that this description of $\cd_{sg}(R)$ in terms of $B$ resembles the construction due to Amiot~\cite{Amiot09} of the cluster category of a quiver with potential from the associated Ginzburg differential graded algebra.

\begin{theorem}\label{thm:main-compare-2}
Assume that $A$ is flat over $k$ and that $A$ has finite global dimension. Then there is a triangle equivalence up to direct summands
\[
\cd_{sg}(R)\simeq\per(B)/\cd_{fg}(B),
\]
where $\cd_{fg}(B)$ is the derived category of differential graded $B$-modules whose total cohomology is finitely generated over the $0$-th cohomology of $B$.
\end{theorem}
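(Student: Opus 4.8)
The plan is to obtain this statement from Theorem~\ref{thm:main-compare} by transporting it along the triangle equivalence $F\colon\Delta_e(A)\iso\per(B)$ of Corollary~\ref{c:restriction-and-induction}. Once $F$ is shown to identify the thick subcategory $\Delta_e(A)_{rhb}$ with $\cd_{fg}(B)$ — for which one needs in particular that $\cd_{fg}(B)\subseteq\per(B)$ — the two Verdier quotients $\Delta_e(A)/\Delta_e(A)_{rhb}$ and $\per(B)/\cd_{fg}(B)$ become canonically equivalent, and Theorem~\ref{thm:main-compare} gives the result, the qualifier ``up to direct summands'' being inherited unchanged. So the entire content is the identification of these two subcategories, and for this I would use the structural features of the dg model furnished by Corollary~\ref{c:restriction-and-induction}: that $B$ may be taken with $H^i(B)=0$ for $i>0$, that $H^0(B)$ is right noetherian, that every $H^i(B)$ is finitely generated over $H^0(B)$, and — this is where $\gldim A<\infty$ is used — that $\cd_{fg}(B)$ is contained in $\per(B)$, the analogue of the inclusion $\cd_{fd}(\Gamma)\subseteq\per(\Gamma)$ valid for the Ginzburg dg algebra $\Gamma$ of a quiver with potential.

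First I would give an intrinsic description of $F(\Delta_e(A)_{rhb})$ inside $\per(B)$. For $M\in\per(B)$ one has $\Hom_{\cd(B)}(B,\Sigma^p M)=H^p(M)$, and for fixed $M$ the class of objects $N$ with $\Hom_{\cd(B)}(N,\Sigma^p M)=0$ for almost all $p$ is a thick subcategory of $\per(B)$ (it is closed under shifts, summands, and, by the long exact sequence, cones). Since $B$ generates $\per(B)$, it follows that $F^{-1}(M)\in\Delta_e(A)_{rhb}$ if and only if $H^p(M)=0$ for almost all $p$. Next, because $M$ is perfect over $B$, each $H^p(M)$ is finitely generated over $H^0(B)$: the class of $X\in\cd(B)$ with this property is thick, and it contains $B$ by the finiteness assumption on the cohomology of $B$. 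Consequently the total cohomology $\bigoplus_p H^p(M)$, being a direct sum of $H^0(B)$-modules, is finitely generated over $H^0(B)$ precisely when only finitely many $H^p(M)$ are nonzero — indeed a finitely generated module over any ring cannot be an infinite direct sum of nonzero submodules, since a finite generating set meets only finitely many of the summands. Combining the two observations, $F^{-1}(M)\in\Delta_e(A)_{rhb}$ if and only if $M\in\cd_{fg}(B)$, so $F$ restricts to an equivalence $\Delta_e(A)_{rhb}\iso\cd_{fg}(B)$ as soon as the target is known to sit in $\per(B)$.

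It remains to establish $\cd_{fg}(B)\subseteq\per(B)$. Since $B$ is non-positive with $H^0(B)$ noetherian, every object of $\cd_{fg}(B)$ is obtained in finitely many triangles from shifts of finitely generated $H^0(B)$-modules viewed as dg $B$-modules along $B\to H^0(B)$, so it suffices that each such module is perfect over $B$; this is exactly the smoothness-type finiteness that $\gldim A<\infty$ (together with $k$-flatness of $A$) buys, and it should be read off from the explicit construction of $B$ in Corollary~\ref{c:restriction-and-induction}. Granting this, the equivalence $F$ descends to $\Delta_e(A)/\Delta_e(A)_{rhb}\simeq\per(B)/\cd_{fg}(B)$, and Theorem~\ref{thm:main-compare} completes the proof. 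The d\'evissage arguments above are routine; the real obstacle is the pair of structural inputs about $B$ — the finite generation of its cohomology over $H^0(B)$ and the inclusion $\cd_{fg}(B)\subseteq\per(B)$ — both of which genuinely depend on $\gldim A<\infty$ and must be extracted from the dg model rather than argued abstractly.
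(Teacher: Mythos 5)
Your reduction of Theorem~\ref{thm:main-compare-2} to Theorem~\ref{thm:main-compare} is circular relative to this paper. Theorem~\ref{thm:main-compare} is established only as Corollary~\ref{cor:relative-singularity-category-vs-singularity-category}, and the proof given there concludes by applying Theorem~\ref{t:singularity-category-vs-relative-singularity-category} --- which \emph{is} Theorem~\ref{thm:main-compare-2}, the statement you are asked to prove. So the substantive content, namely any comparison between $\cd_{sg}(eAe)$ and the relative singularity category $\Delta_e(A)=\ch^b(\proj A)/\thick(eA)$ at all, is never produced by your argument: it is imported wholesale through the black box Theorem~\ref{thm:main-compare}, for which no independent proof exists in the paper. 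The paper's actual route is via \cite[Proposition 3.3]{KalckYang16}, which gives a triangle equivalence between $\bigl(\cd^b(\mod A)/\thick(eA)\bigr)/\thick(q(\mod A/AeA))$ and $\cd_{sg}(eAe)$ induced by $\Hom_A(eA,?)$; since $\gldim A<\infty$ gives $\cd^b(\mod A)=\ch^b(\proj A)$, combining this with Corollary~\ref{c:restriction-and-induction}(a) and (c) (the equivalence up to direct summands $\Delta_e(A)\to\per(B)$, restricting to a genuine equivalence $\thick(q(\mod A/AeA))\to\cd_{fg}(B)$) yields the theorem. A non-circular proof must contain an argument of this kind, and your proposal contains none.

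What you do carry out correctly is the bridge $\Delta_e(A)_{rhb}\simeq\cd_{fg}(B)$: for the non-positive dg algebra $B$ with $H^0(B)\cong A/AeA$ right noetherian, with $H^i(B)$ finitely generated over $H^0(B)$ and $\cd_{fg}(B)\subseteq\per(B)$, your d\'evissage showing $\per(B)_{rhb}=\cd_{fg}(B)$ is sound. But this is precisely the step the paper uses in the \emph{opposite} direction, to deduce Theorem~\ref{thm:main-compare} from Theorem~\ref{thm:main-compare-2} in Corollary~\ref{cor:relative-singularity-category-vs-singularity-category}; by itself it cannot carry the implication the other way. Also, the two structural inputs you defer --- the inclusion $\cd_{fg}(B)\subseteq\per(B)$ and the finite generation of $H^i(B)$ over $H^0(B)$ --- are not something to be ``read off from the explicit construction of $B$'' inside your argument: they are Proposition~\ref{p:finitely-generated-are-perfect}(a),(b), which applies because $A$ is right noetherian and $\gldim A<\infty$ forces every finitely generated $A/AeA$-module to have finite projective dimension over $A$; cite it rather than leaving the claim open.
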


 We then provide a description of $B$ as a dg quiver algebra in the case when $A$ is quasi-isomorphic to a dg quiver algebra (Lemma~\ref{lem:non-complete-cofibrant-model} and Theorem~\ref{t:construction-with-noncomplete-cofibrant-model}). 
\begin{theorem} \label{thm:main-construction}

Assume that $k$ is a field and assume that there is a quasi-isomorphism $\rho \colon  \tilde{A}\rightarrow A$ of dg algebras, where $\tilde{A}=(kQ,d)$ is a dg quiver algebra, such that $e$ is the image under $\rho$
of the sum $\tilde{e}$ of trivial paths at a subset $I$ of vertices of $Q$. Then $B$ is quasi-equivalent to the dg quiver algebra $(kQ',d')$, where $Q'$ is the quiver obtained from $Q$ by removing the vertices in $I$ and $d'$ is obtained from $d$ by removing the summands involving all paths passing through vertices in $I$.
\end{theorem}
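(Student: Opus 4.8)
The plan is to reduce to the case $A=\tilde A=(kQ,d)$ by means of $\rho$, and then to compute $B$ from an explicit dg model of the relative singularity category. First I would note that restriction of scalars along the quasi-isomorphism $\rho$ is a triangle equivalence $\cd(A)\iso\cd(\tilde A)$ which carries $eA$ to $\tilde e\tilde A$, because $\rho(\tilde e)=e$; it therefore restricts to a triangle equivalence $\Delta_e(A)\simeq\Delta_{\tilde e}(\tilde A)$ and, more to the point, identifies the dg enhancements used in Corollary~\ref{c:restriction-and-induction} to produce $B$. Hence the dg algebra attached to $(A,e)$ is quasi-equivalent to the one attached to $(\tilde A,\tilde e)$, and it suffices to exhibit a quasi-equivalence $(kQ',d')\simeq B$ with $B$ computed from $(\tilde A,\tilde e)$.

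Next I would make $B$ concrete. Regard $\tilde A$ as a dg category $\cc$ with object set $Q_0$ and morphism complexes $\cc(v,w)=e_w(kQ)e_v$, the paths of $Q$ from $v$ to $w$ with the differential $d$; this is a dg enhancement of $\ch^b(\proj\tilde A)$ under which $e_v$ corresponds to $e_v\tilde A$. Let $\ci\subseteq\cc$ be the full dg subcategory on the objects $v\in I$. Then $\Delta_{\tilde e}(\tilde A)$ is (the $H^0$ of a pretriangulated hull of) the Drinfeld dg quotient $\cc/\ci$, and $B$ is quasi-equivalent to the dg endomorphism algebra in $\cc/\ci$ of the object $\bigoplus_{v\notin I}e_v$. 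Spelled out, the morphism complex of $B$ from $v$ to $w$ (with $v,w\notin I$) is spanned by zigzag words $p_0\eps_{i_1}p_1\cdots\eps_{i_n}p_n$ with $i_1,\dots,i_n\in I$ and $p_0,\dots,p_n$ composable paths of $Q$, where $\eps_i$ is the adjoined degree $-1$ morphism with $d\eps_i=e_i$; the $n=0$ part is $e_w(kQ)e_v$ itself, sitting inside $B$. Alternatively one may start from a cofibrant resolution of $\tilde A$ adapted to $\tilde e$, which is exactly what Lemma~\ref{lem:non-complete-cofibrant-model} provides; the two descriptions agree up to quasi-equivalence.

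Then I would construct a morphism of dg algebras $\varphi\colon(kQ',d')\to B$ and prove it is a quasi-isomorphism. Recall that $Q'$ has vertex set $Q_0\setminus I$ and, as arrows, those arrows of $Q$ with both endpoints outside $I$, while $d'$ deletes from $d$ all summands that are paths through $I$. The identity $(d')^2=0$ follows from the elementary remark that differentiating a path never changes its endpoints, so $d$ sends any path through $I$ to a sum of paths through $I$; reading off the $I$-avoiding component of the identity $d^2=0$ then gives $(d')^2=0$. For $\varphi$ I would send $e_v\mapsto e_v$ and an arrow $a$ of $Q'$ to $a+c_a$, where the correction $c_a$ is a zigzag word of strictly higher complexity, chosen inductively so that $d_B(a+c_a)=\varphi(d'a)$; the obstruction to extending $c_a$ at each stage is a cocycle of $B$ supported on paths through $I$, and it is a coboundary precisely because each object $e_i$ with $i\in I$ becomes contractible in $\cc/\ci$, the homotopy being built from $\eps_i$. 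To see that $\varphi$ is a quasi-isomorphism I would filter the source and target morphism complexes — for instance by the number of letters of the underlying word in the arrows of $Q$, or by the $\eps$-length — and check that on associated gradeds the comparison is a quasi-isomorphism, the contractions by the $\eps_i$ killing exactly the part of $B$ that traverses $I$.

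The hard part is this final quasi-isomorphism. As soon as $Q$ has an oriented cycle through a vertex of $I$, the morphism complexes of $B$ are infinite-dimensional, so the comparison must be organised so as to need no completion and no delicate convergence of spectral sequences — this is precisely the role of the ``non-complete'' cofibrant model of Lemma~\ref{lem:non-complete-cofibrant-model}: with the right choice of resolution, the inclusion of $(kQ',d')$ becomes a degreewise split quasi-isomorphism, and Theorem~\ref{t:construction-with-noncomplete-cofibrant-model} applies, Theorem~\ref{thm:main-construction} being its specialisation. I expect the genuine work to lie in (a) pinning down that model and (b) verifying that transport of structure introduces no higher $A_\infty$-operations on $(kQ',d')$ beyond the differential $d'$, so that the output is again an honest dg quiver algebra.
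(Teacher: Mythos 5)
Your first reduction (transporting the problem along $\rho$, so that $B$ may be computed from $(\tilde A,\tilde e)$, e.g.\ from Drinfeld's dg quotient) is sound and is exactly what the paper's Lemma~\ref{lem:B-in-homotopy-category-of-dg-algebras} makes precise, since $\rho^*(eA)\cong\tilde e\tilde A$ in $\cd(\tilde A)$. The gap is in the third step, which is where the whole content of the theorem sits: you never prove the quasi-isomorphism between $(kQ',d')$ and your explicit model of $B$. The inductive construction of the corrections $c_a$ and the filtration argument both rest on the claim that the ``part of $B$ that traverses $I$'' (words containing an $\eps_i$ or an intermediate vertex in $I$) is acyclic; this is asserted via ``$e_i$ becomes contractible in $\cc/\ci$'' but not derived from it, and it is precisely the statement to be proved. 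Worse, the concluding paragraph defers this hard point to Lemma~\ref{lem:non-complete-cofibrant-model} and Theorem~\ref{t:construction-with-noncomplete-cofibrant-model}: the former is irrelevant here (it produces a dg quiver resolution of an ordinary algebra $kQ/I$, i.e.\ the input datum $\rho$, and says nothing about deleting vertices or about $B$), and invoking the latter is circular, since Theorem~\ref{t:construction-with-noncomplete-cofibrant-model} together with the observation of Section~\ref{ss:deleting-a-vertex} that $\tilde A/\tilde A\tilde e\tilde A=(kQ',d')$ \emph{is} the statement under discussion. The worry about higher $A_\infty$-operations is a red herring: the target $(kQ',d')$ is given as an honest dg algebra and only a quasi-equivalence (zigzag or bimodule) is required, no transfer of structure.

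For comparison, the paper avoids any explicit comparison morphism. It proves (Lemma~\ref{l:recollement-from-projective-free-case}) that for a dg quiver algebra $\tilde A=(kQ,d)$ and $\tilde e$ a sum of trivial paths, the projection $\tilde A\to\tilde A/\tilde A\tilde e\tilde A$ realises the left half of the recollement of Proposition~\ref{p:recollement-from-projective-general-case}; the key computations are $(\tilde A/\tilde A\tilde e\tilde A)\tilde e=0$ and $\tilde A\tilde e\tilde A\cong\tilde A\tilde e\otimes_{\tilde e\tilde A\tilde e}\tilde e\tilde A$, both exploiting that $\tilde e$ is a sum of trivial paths in a path algebra. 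Combined with the uniqueness Lemma~\ref{lem:B-in-homotopy-category-of-dg-algebras} (applied with $s'=\rho$) this yields the quasi-equivalence $B\simeq\tilde A/\tilde A\tilde e\tilde A=(kQ',d')$. If you want to stay inside your Drinfeld picture, the workable comparison goes the other way: there is an evident dg algebra surjection from your zigzag-word model of $B$ onto $(kQ',d')$ sending every word meeting $I$ to zero, and its kernel can be contracted by a ``first occurrence of $I$'' homotopy built from the $\eps_i$, in the spirit of Lemma~\ref{lem:deleting-a-contractible-vertex}; but that homotopy has to be written out, and doing so is the actual proof you have omitted.
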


We remark that both categories $\per$ and $\cd_{fg}$ are preserved up to triangle equivalence if  $B$ is replaced by a quasi-equivalent differential graded algebra. 
We will also recall two general constructions of $B$ due to Nicol\'as--Saor\'in \cite{NicolasSaorin09} and due to Drinfeld~\cite{Drinfeld04}. 

\smallskip
In the final part of this paper, we collect some examples for which Theorems~\ref{thm:main-compare-2} and~\ref{thm:main-construction} apply to give structural results on $\cd_{sg}(R)$, which are known or generalise known results.  These results are listed below.

\smallskip

(1) Assume that $k$ is a field. Let $Q$ be a finite quiver. Let $R$ the associated algebra of radical square zero and let $L(Q^{op})$ be the Leavitt path algebra of the opposite quiver $Q^{op}$. The following result was first obtained as \cite[Theorem 6.1]{ChenYang15}.

\begin{theorem*}[{\ref{thm:rad^2=0-vs-Leavitt}}] 
The singularity category $\cd_{sg}(R)$ is triangle equivalent to the perfect derived category $\per(L(Q^{op}))$ of $L(Q^{op})$. Here we consider the graded algebra $L(Q^{op})$ as a differential graded algebra with trivial differential.
\end{theorem*}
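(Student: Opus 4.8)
The plan is to apply Theorem~\ref{thm:main-construction} to the algebra $R$ of radical square zero associated to a finite quiver $Q$. First I would recall the standard presentation: if $R$ has radical square zero with separated quiver data encoded by $Q$, then $R = kQ_0 \oplus kQ_1$ with $\rad^2 = 0$, i.e.\ $R \cong kQ/(\text{paths of length} \ge 2)$. The point is to realise $R$ as $eAe$ for a suitable hereditary-type algebra $A$ of finite global dimension, together with a dg quiver model. The natural candidate for $A$ is (a dg model of) the algebra whose quiver $\hat{Q}$ is obtained by doubling: for each vertex $i$ of $Q$ one adds a new vertex $i'$, and each arrow $a\colon i\to j$ of $Q$ becomes an arrow $i \to j'$, while one also adds arrows (or rather the relevant dg structure) linking $i'$ back to $i$. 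I would set $I$ to be the set of new (primed) vertices, so that removing $I$ from $\hat{Q}$ gives back $Q$ and the surviving differential on $kQ$ is trivial (since the only paths in the model pass through primed vertices). Then Theorem~\ref{thm:main-construction} identifies $B$ with $(kQ, 0)$ as a dg quiver algebra — but one must check this computation yields precisely the Leavitt path algebra structure, with its grading, rather than just the path algebra.

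The key observation linking to Leavitt path algebras is the identification, going back to Smith and to Chen--Yang, of $\Delta_e(A)$ (equivalently $\per(B)$) with the perfect derived category of the Leavitt path algebra: the relevant $A$ is a dg algebra whose degree-zero part handles $R$ and whose higher structure produces, after deleting the vertices $I$, exactly the relations $\sum_{a\colon i\to j} a a^* = \text{(vertex idempotent minus sinks)}$ and $a^* b = \delta_{ab}$ defining $L(Q^{op})$, placed in degree $0$ with zero differential. Concretely I would: (i) write down the explicit dg quiver algebra $\tilde A = (k\hat{Q}, d)$ with $\hat{Q}$ the doubled quiver, $d$ encoding the Leavitt relations as components of the differential supported on paths through $I$; (ii) verify $\tilde A$ has finite global dimension and is (quasi-isomorphic to) an honest algebra $A$ with $eAe \cong R$ where $e = \tilde e$ is the sum of trivial paths at the unprimed vertices; (iii) apply Theorem~\ref{thm:main-construction} to get $B \simeq (kQ', d')$ with $Q'$ the quiver on the unprimed vertices and $d'$ the part of $d$ not passing through $I$; (iv) observe $Q' = Q^{op}$-data assembled with the deleted relations reappearing, so that $(kQ', d')$ is exactly $L(Q^{op})$ with trivial differential. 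Finally, apply Theorem~\ref{thm:main-compare-2}: $\cd_{sg}(R) \simeq \per(B)/\cd_{fg}(B)$, and show $\cd_{fg}(B) = 0$ in this case because $H^0(B) = L(Q^{op})$ has the property that finitely generated modules are already perfect (Leavitt path algebras have global dimension behaviour making $\cd_{fg}$ trivial here), so the quotient is all of $\per(B) = \per(L(Q^{op}))$.

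The main obstacle, I expect, is step (iv): pinning down that the dg quiver algebra $B = (kQ', d')$ produced abstractly by Theorem~\ref{thm:main-construction} coincides \emph{on the nose} (up to quasi-equivalence) with the Leavitt path algebra $L(Q^{op})$, including getting the orientation ($Q$ versus $Q^{op}$) and the grading right. This requires choosing the dg model $\tilde A$ of $A$ very carefully so that the ``arrows passing through $I$'' that get deleted are precisely the ones carrying the CSP-type (Cuntz--Krieger) relations, and so that $d'$ genuinely vanishes — i.e.\ no residual higher $A_\infty$-type operations survive on the unprimed part. A secondary issue is verifying $\cd_{fg}(B) = 0$: one needs that every dg $B$-module with finite-dimensional total cohomology over $H^0(B) = L(Q^{op})$ is already in $\per(B)$, which should follow since $B$ is concentrated in degree zero and $L(Q^{op})$ has the relevant homological finiteness, but it deserves an explicit argument. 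Once the dictionary is set up correctly, the rest is assembling Theorems~\ref{thm:main-compare-2} and~\ref{thm:main-construction} and invoking that $\per$ and $\cd_{fg}$ are quasi-equivalence invariants, as remarked after Theorem~\ref{thm:main-construction}.
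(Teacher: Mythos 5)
Your first half follows the paper's route: take the Auslander-type algebra $A=\End_R(R\oplus S_1\oplus\ldots\oplus S_n)$, which has global dimension at most $2$, build the dg quiver model via Lemma~\ref{lem:non-complete-cofibrant-model}, and apply Theorem~\ref{t:construction-with-noncomplete-cofibrant-model} with $e$ the idempotent of the summand $R$ (note a slip in your bookkeeping: $I$ must be the set of vertices supporting $e$, i.e.\ the vertices indexing the projectives, and it is these that get deleted, so the surviving quiver sits on the vertices indexing the simples). The resulting $B$ is the graded path algebra $kQ$ with all arrows placed in degree $-1$ and zero differential --- not the Leavitt path algebra. Here lies the genuine gap: your step (iv) and the claim $\cd_{fg}(B)=0$ are false and cannot be repaired by a cleverer choice of model. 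All admissible $B$'s are quasi-equivalent to one another (Lemma~\ref{lem:B-in-homotopy-category-of-dg-algebras}) and, by Proposition~\ref{p:recollement-from-projective-general-case}, have cohomology concentrated in non-positive degrees with $H^0(B)\cong A/AeA=\underline{\End}_R(S_1\oplus\ldots\oplus S_n)$, a finite-dimensional algebra; whereas $L(Q^{op})$, viewed as a dg algebra with trivial differential, has nonzero cohomology in positive degrees as soon as $Q$ has an arrow. Moreover $\cd_{fg}(B)=\cd_{fd}(B)=\thick(S_i^B\mid i\in Q_0)\neq 0$ (Proposition~\ref{prop:standard-t-str}), so $\per(B)/\cd_{fg}(B)$ is a proper quotient of $\per(B)$; the relative singularity category is strictly larger than $\cd_{sg}(R)$ in this situation.

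What your argument is missing is the mechanism by which the Leavitt path algebra actually appears: it is not $B$ but a \emph{universal localisation} of $B$. The paper writes down the minimal graded projective resolutions $f_i$ of the simple $B$-modules, sets $f=\bigoplus_i f_i$, identifies the universal localisation $B_f$ with $L(Q^{op})$ (this is where the dual arrows and the Cuntz--Krieger relations are adjoined), and then invokes Theorem~\ref{thm:localisation-of-graded-alg} --- using that $B$ is graded hereditary and a Neeman--Thomason type localisation argument --- to obtain $\per(B)/\cd_{fd}(B)\simeq\per(B_f)=\per(L(Q^{op}))$ up to summands, which is an honest equivalence by idempotent completeness of $\cd_{sg}(R)$ (Corollary~\ref{cor:singularity-cat-is-AGK-cat-rad2=0}). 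So the quotient by $\cd_{fd}(B)$ is genuinely nontrivial and is computed by localisation theory; declaring it trivial because ``$B$ is already $L(Q^{op})$'' short-circuits exactly the step that the theorem is about.
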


(2) Assume that $k$ is an algebraically closed field of characteristic $0$.
Let $\tilde{Q}$ be an Euclidean quiver with vertex set $\{0,1,\ldots,n\}$ such that $0$ is an extending vertex, and let $Q$ be the Dynkin quiver obtained from $\tilde{Q}$ by deleting $0$. For $\tilde{\lambda}\in k^{n+1}$, let $R$ be the corresponding deformed Kleinian singularity in the sense of \cite{CrawleyBoeveyHolland98}. When $\tilde{\lambda}=0$, $R$ is (the coordinate ring of) the Kleinian singularity associated to $Q$ . Let $\lambda$ be obtained from $\tilde{\lambda}$ by removing the $0$-th entry. Let $Q_{\lambda}$ be the full subquiver of $Q$ consisting of vertices $i$ with $\lambda_i=0$ and write $Q_{\lambda}=Q^{(1)}\cup\ldots \cup Q^{(s)}$ as the disjoint union of Dynkin quivers.   The following theorem was first obtained as \cite[Theorem 4.4]{Crawford16}. 

\begin{theorem*}[{\ref{thm:sing-cat-of-deformed-preproj-alg}}] Assume that $\lambda$ is dominant in the sense of \cite{CrawleyBoeveyHolland98}. Then the singularity category $\cd_{sg}(R)$ is triangle equivalent to $\cd_{sg}(R^{(1)})\oplus\ldots\oplus\cd_{sg}(R^{(s)})$, where $R^{(i)}$ is the Kleinian singularity corresponding to the Dynkin quiver $Q^{(i)}$.
\end{theorem*}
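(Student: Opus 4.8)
The plan is to realise $R$ as a corner ring $eAe$ for an algebra $A$ of finite global dimension and then to apply Theorems~\ref{thm:main-compare-2} and~\ref{thm:main-construction}, reducing the statement to a dg-algebraic assertion about deformed preprojective algebras. By \cite{CrawleyBoeveyHolland98} one has $R\cong e_{0}\,\Pi^{\tilde\lambda}(\tilde Q)\,e_{0}$, where $\Pi^{\tilde\lambda}(\tilde Q)=k\overline{\tilde Q}/\big(\sum_{a}[a,a^{*}]-\sum_{i}\tilde\lambda_{i}e_{i}\big)$ is the deformed preprojective algebra of the double $\overline{\tilde Q}$ of the Euclidean quiver $\tilde Q$ (the sum running over the arrows $a$ of $\tilde Q$) and $e_{0}$ is the idempotent at the extending vertex. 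Put $A:=\Pi^{\tilde\lambda}(\tilde Q)$ and $e:=e_{0}$. One first records that $A$ is right noetherian, that it is flat over the field $k$ (automatic), and that the dominance of $\lambda$ forces $\gldim A<\infty$; these facts are in \cite{CrawleyBoeveyHolland98}. Theorem~\ref{thm:main-compare-2} then gives a triangle equivalence (up to direct summands, which is harmless as all categories in sight are idempotent-complete) $\cd_{sg}(R)\simeq\per(B)/\cd_{fg}(B)$ for the dg algebra $B$ associated with the pair $(A,e)$.

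Next one computes $B$ via Theorem~\ref{thm:main-construction}, for which one needs a dg quiver algebra quasi-isomorphic to $A$ in which $\tilde e$ is the sum of the trivial paths over $I=\{0\}$. I would use the \emph{derived deformed preprojective algebra} $\Gamma^{\tilde\lambda}(\tilde Q)$: the path algebra of the graded quiver whose arrows are those of $\overline{\tilde Q}$ (in degree $0$) together with a loop $t_{i}$ (in degree $-1$) at each vertex $i$, with differential vanishing on the degree-$0$ arrows and $d\,t_{i}=e_{i}\big(\sum_{a}[a,a^{*}]-\tilde\lambda_{i}e_{i}\big)e_{i}$. This dg algebra is cofibrant (build it from $k$ by attaching first the degree-$0$ arrows, with zero differential, then the loops $t_{i}$, whose differential lands in the already-constructed subalgebra); its zeroth cohomology is $A$, and, since $\tilde Q$ is not Dynkin, it has no higher cohomology, so $\Gamma^{\tilde\lambda}(\tilde Q)\rightarrow A$ is a quasi-isomorphism. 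Applying Theorem~\ref{thm:main-construction} with $I=\{0\}$ deletes the vertex $0$ and all paths through it; the only effect on the differentials $d\,t_{i}$ is to erase the summands routed through $0$, and one is left with $B\simeq\Gamma^{\lambda}(Q)$, the derived deformed preprojective algebra of the \emph{Dynkin} quiver $Q$ with parameter $\lambda$. In particular $H^{0}(B)=\Pi^{\lambda}(Q)$, a finite-dimensional algebra.

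The heart of the proof is to split $\per(B)/\cd_{fg}(B)$ along the connected components $Q^{(1)},\dots,Q^{(s)}$ of $Q_{\lambda}$. Here I would invoke the representation theory of \cite{CrawleyBoeveyHolland98}: the simple $\Pi^{\lambda}(Q)$-modules are controlled by the positive roots $\alpha$ of $Q$ with $\lambda\cdot\alpha=0$, and dominance forces such $\alpha$ to be supported on $Q_{\lambda}$, so that the simple $\Pi^{\lambda}(Q)$-modules are exactly the $S_{i}$ with $i\in Q_{\lambda}$. Since the regular module is finite-dimensional, it follows that $e_{i}\Pi^{\lambda}(Q)=0$ for every vertex $i$ with $\lambda_{i}\neq0$; feeding this into the graded module structure of $H^{*}(B)$ over $H^{0}(B)=\Pi^{\lambda}(Q)$ --- and using that the derived preprojective algebra is bimodule $2$-Calabi--Yau, so that its negative cohomology is recovered from $H^{0}(B)$ by applying powers of the inverse Serre functor --- one obtains $e_{i}H^{*}(B)=0$ for $\lambda_{i}\neq0$ and $e_{i}H^{*}(B)e_{i'}=0$ whenever $i,i'$ lie in different components of $Q_{\lambda}$. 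Consequently the objects $e_{i}B$ with $\lambda_{i}\neq0$ are acyclic, and the families $\{e_{i}B : i\in Q^{(j)}\}$, $j=1,\dots,s$, are pairwise $\Hom$-orthogonal in all degrees; hence $\per(B)=\bigoplus_{j}\cc_{j}$ with $\cc_{j}=\thick(e_{i}B : i\in Q^{(j)})$, this decomposition restricts to $\cd_{fg}(B)$, and $\cc_{j}$ is triangle equivalent to $\per(\Gamma^{0}(Q^{(j)}))$, matching $\cd_{fg}(B)\cap\cc_{j}$ with $\cd_{fg}(\Gamma^{0}(Q^{(j)}))$ --- indeed the graded endomorphism dg algebra of the generators of $\cc_{j}$ is the $Q^{(j)}$-block of $H^{*}(B)$, which by the above and Calabi--Yau duality coincides with $H^{*}(\Gamma^{0}(Q^{(j)}))$. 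Therefore $\cd_{sg}(R)\simeq\bigoplus_{j=1}^{s}\per(\Gamma^{0}(Q^{(j)}))/\cd_{fg}(\Gamma^{0}(Q^{(j)}))$.

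Finally one identifies each summand with $\cd_{sg}(R^{(j)})$ by running the first two steps in the Kleinian case $\tilde\lambda=0$, $\tilde Q=\widetilde{Q^{(j)}}$: there $A^{(j)}=\Pi(\widetilde{Q^{(j)}})$ is the preprojective algebra of the non-Dynkin quiver $\widetilde{Q^{(j)}}$ (equivalently the skew group algebra $k[x,y]\rtimes G_{j}$), which has global dimension $2$, and $R^{(j)}=e_{0}A^{(j)}e_{0}$ is the Kleinian singularity of $Q^{(j)}$; Theorems~\ref{thm:main-compare-2} and~\ref{thm:main-construction} then yield $\cd_{sg}(R^{(j)})\simeq\per(\Gamma^{0}(Q^{(j)}))/\cd_{fg}(\Gamma^{0}(Q^{(j)}))$, and combining this with the previous display completes the proof. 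I expect the main obstacle to be the third step: upgrading the ring-level decomposition $\Pi^{\lambda}(Q)\cong\bigoplus_{j}\Pi(Q^{(j)})$ of \cite{CrawleyBoeveyHolland98} to a decomposition of the triangulated category $\per(B)/\cd_{fg}(B)$. This requires control of the entire unbounded negative cohomology of the deformed derived preprojective algebra of a Dynkin quiver and a verification that Calabi--Yau rigidity makes the vertices $i$ with $\lambda_{i}\neq0$ homologically invisible; checking that the $Q^{(j)}$-blocks of the cohomology are genuinely those of the undeformed $\Gamma^{0}(Q^{(j)})$ is the delicate point.
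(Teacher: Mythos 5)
Your opening moves match the paper's: realise $R=e_0\Pi^{\tilde\lambda}(\tilde Q)e_0$, invoke Theorem~\ref{t:singularity-category-vs-relative-singularity-category} to get $\cd_{sg}(R)\simeq\per(B)/\cd_{fg}(B)$, and compute $B$ via the quasi-isomorphism $\mathbf{\Pi}^{\tilde\lambda}(\tilde Q)\to\Pi^{\tilde\lambda}(\tilde Q)$ (Lemma~\ref{lem:acyclicity-of-derived-preprojective-algebra}) together with Theorem~\ref{t:construction-with-noncomplete-cofibrant-model} and Lemma~\ref{lem:deleting-a-vetex-preprojective-algebra}, so that $B\simeq\mathbf{\Pi}^{\lambda'}(Q')$, the deformed derived preprojective algebra of the Dynkin quiver. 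The problem is your third step, which you yourself flag as delicate. You propose to compute the blocks of the graded algebra $H^*(B)$ using a bimodule $2$-Calabi--Yau property of $\mathbf{\Pi}^{\lambda'}(Q')$ ("negative cohomology recovered from $H^0$ by powers of the inverse Serre functor") and then to deduce $\thick(e_iB : i\in Q^{(j)})\simeq\per(\mathbf{\Pi}(Q^{(j)}))$ because the graded endomorphism algebras agree. Two things break here. First, the $2$-CY property of the \emph{deformed} completion with $\lambda\neq 0$ is not free: by Keller's correction \cite{Keller11c} deformed Calabi--Yau completions are only known to be Calabi--Yau under an extra lifting hypothesis on the deformation class, and nothing of the sort is established (or needed) in this setting. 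Second, and more seriously, an isomorphism of graded cohomology algebras $H^*(e_{Q^{(j)}}Be_{Q^{(j)}})\cong H^*(\mathbf{\Pi}(Q^{(j)}))$ does not give a triangle equivalence of perfect derived categories: $\per$ of a dg algebra is an invariant of the quasi-equivalence class, not of the cohomology algebra, so without producing an actual dg (or $A_\infty$) quasi-isomorphism your final identification of $\cc_j$ with $\per(\mathbf{\Pi}(Q^{(j)}))$ is unjustified. These dg algebras are far from formal territory a priori (for Dynkin quivers $\mathbf{\Pi}(Q^{(j)})$ has cohomology in all negative degrees), so this is a genuine gap, not a routine verification.

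The paper closes exactly this gap at the dg level rather than the cohomology level. Dominance gives, via \cite[Lemma 7.1(1)]{CrawleyBoeveyHolland98}, that the surjection $\Pi^{\lambda'}(Q')\to\Pi(Q'_{\lambda'})$ is an algebra \emph{isomorphism}; hence for every vertex $i$ with $\lambda_i\neq 0$ the idempotent $e_i$ vanishes in $H^0$, i.e.\ $e_i=d(x_i)$ for some $x_i$, and the contractible-vertex Lemma~\ref{lem:deleting-a-contractible-vertex} (an explicit homotopy) shows that the surjective dg algebra map $\mathbf{\Pi}^{\lambda'}(Q')\to\mathbf{\Pi}(Q'_{\lambda'})$ is a quasi-isomorphism (Lemma~\ref{lem:reduction-for-dominant-weight}). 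Since $Q'_{\lambda'}$ is the disjoint union of the $Q^{(j)}$, the dg algebra $\mathbf{\Pi}(Q'_{\lambda'})$ decomposes on the nose as a product of the $\mathbf{\Pi}(Q^{(j)})$, and $\per/\cd_{fd}$ decomposes accordingly; no Calabi--Yau duality, no computation of $H^*(B)$, and no formality issue arise. Your final step (running the same machine for $\tilde\lambda=0$ to identify $\per(\mathbf{\Pi}(Q^{(j)}))/\cd_{fd}$ with $\cd_{sg}(R^{(j)})$) agrees with the paper; you should also record, rather than wave at, the idempotent completeness of $\cd_{sg}(R)$ and of the $\cd_{sg}(R^{(j)})$ (the paper cites \cite[Corollary 3.7]{Crawford16}, respectively Theorem~\ref{thm:idempotent-completeness-and-Hom-finiteness-of-singularity-category}), since it is what upgrades "equivalence up to direct summands" to an equivalence.
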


(3) Assume $k=\mathbb{C}$. Let $G\subset SL_3(\mathbb{C})$ be a finite subgroup, which naturally acts on $\mathbb{C}[x,y,z]$. Let $R=\mathbb{C}[x,y,z]^G$ be the ring of $G$-invariant polynomials. If $R$ has isolated singularity, then the `complete' version of the following theorem was obtained in \cite{deVolcseyVandenBergh16}; under suitable assumptions, including that $R$ has isolated singularity, the following theorem is very close to \cite[Corollary 5.3]{AmiotIyamaReiten15}.

\begin{theorem*}[{\ref{thm:singularity-category-of-quotient-singularity}}]  
The singularity category $\cd_{sg}(R)$ is triangle equivalent to the small cluster category of some quiver with potential.
\end{theorem*}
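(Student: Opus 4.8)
The plan is to realise $R=\mathbb{C}[x,y,z]^G$ as the idempotent subalgebra $eAe$ of a $\mathbb{C}$-algebra $A$ of finite global dimension and then feed the pair $(A,e)$ into Theorems~\ref{thm:main-compare-2} and~\ref{thm:main-construction}. For $A$ I would take the skew group algebra $A=\mathbb{C}[x,y,z]\# G$ (equivalently $A\cong\End_R(\mathbb{C}[x,y,z])$, a non-commutative crepant resolution of $R$). Then $A$ is free, hence flat, as a module over $k=\mathbb{C}$; it is two-sided noetherian; and $\gldim A=3<\infty$, since $\mathbb{C}[x,y,z]$ is regular of dimension $3$ and $|G|$ is invertible in $\mathbb{C}$. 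Taking $e\in A$ to be the idempotent attached to the trivial representation, it is classical that $eAe\cong\mathbb{C}[x,y,z]^G=R$, so Theorem~\ref{thm:main-compare-2} applies and yields a triangle equivalence up to direct summands
\[
\cd_{sg}(R)\simeq\per(B)/\cd_{fg}(B),
\]
where $B$ is as in Corollary~\ref{c:restriction-and-induction}, so that $\per(B)\simeq\Delta_e(A)$.

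The next step is to identify $B$ as a Ginzburg dg algebra via Theorem~\ref{thm:main-construction}. Since $G\subset SL_3(\mathbb{C})$, Ginzburg's theorem says $A=\mathbb{C}[x,y,z]\# G$ is a $3$-Calabi--Yau algebra, and it is isomorphic to the Jacobi algebra $\mathrm{Jac}(Q,W)$ of the McKay quiver $Q$ of $G$ equipped with its canonical degree-$3$ potential $W$, the trivial representation corresponding to one distinguished vertex, which I call $0$. The associated Ginzburg dg algebra $\Gamma=\Gamma(Q,W)$ is a dg quiver algebra $(\mathbb{C}\ol{Q},d)$ concentrated in non-positive degrees, and the $3$-Calabi--Yau property of $\mathrm{Jac}(Q,W)$ is precisely the assertion that the projection $\rho\colon\Gamma\to H^0(\Gamma)=A$ is a quasi-isomorphism; moreover $\rho$ sends the trivial path at $0$ to $e$. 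Thus Theorem~\ref{thm:main-construction} applies with $\tilde A=\Gamma$ and $I=\{0\}$, and $B$ is quasi-equivalent to the dg quiver algebra $(\mathbb{C}Q',d')$ obtained from $(\mathbb{C}\ol{Q},d)$ by deleting the vertex $0$ and all monomials of $d$ passing through $0$.

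It remains to recognise $(\mathbb{C}Q',d')$ as itself a Ginzburg dg algebra. Deleting the vertex $0$ from the quiver with potential $(Q,W)$ -- that is, erasing all arrows incident to $0$ and all cyclic monomials of $W$ running through $0$ -- produces a quiver with potential $(Q_0,W_0)$, and inspecting the Ginzburg differential ($d a^{*}=\partial_a W$ on the reversed arrows and $d t_i=e_i(\sum_a[a,a^{*}])e_i$ on the loops) shows that discarding the summands through $0$ turns $d$ into exactly the Ginzburg differential of $(Q_0,W_0)$; hence $(\mathbb{C}Q',d')\simeq\Gamma(Q_0,W_0)$. Combining the equivalences above gives
\[
\cd_{sg}(R)\simeq\per\bigl(\Gamma(Q_0,W_0)\bigr)\big/\cd_{fg}\bigl(\Gamma(Q_0,W_0)\bigr),
\]
and the right-hand side is, by definition, the small cluster category of $(Q_0,W_0)$ (with idempotent completions understood; when $R$ has an isolated singularity $\cd_{fg}$ coincides with its finite-dimensional analogue, recovering the results of de Volcsey--Van den Bergh and of Amiot--Iyama--Reiten).

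The step I expect to be the main obstacle is the identification in the second and third paragraphs: first, proving that $A=\mathbb{C}[x,y,z]\# G$ is quasi-isomorphic \emph{as a dg algebra} to the Ginzburg dg algebra of the McKay quiver with potential -- equivalently, that $\Gamma(Q,W)$ has vanishing cohomology in negative degrees -- which relies on Ginzburg's Calabi--Yau theorem together with a careful comparison of the completed and non-completed path-algebra conventions, since the relative singularity category machinery here is built on the non-completed cofibrant models of Lemma~\ref{lem:non-complete-cofibrant-model}; and second, checking that the vertex-deletion operation of Theorem~\ref{thm:main-construction} commutes with forming Ginzburg dg algebras, so that the output is genuinely of Ginzburg type and not merely an abstract dg quiver algebra. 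A secondary, bookkeeping-level issue is to match the finiteness condition $\cd_{fg}$ used throughout with the convention fixed in the definition of ``small cluster category'' and to propagate the up-to-direct-summands caveat consistently.
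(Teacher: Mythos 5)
Your route is the paper's own: realise $R$ as $eAe$ for a noncommutative resolution $A$ attached to the McKay quiver with its cubic potential, apply Theorem~\ref{thm:main-compare-2} (i.e.\ Theorem~\ref{t:singularity-category-vs-relative-singularity-category}) using $\gldim A=3$, use formality of the Ginzburg dg algebra to feed Theorem~\ref{thm:main-construction}, and observe that deleting the vertex $0$ from $\Gamma(Q,W)$ gives $\Gamma(Q_G,W_G)$, so that $\cd_{sg}(R)\simeq \per(\Gamma(Q_G,W_G))/\cd_{fg}(\Gamma(Q_G,W_G))=\cc^s(Q_G,W_G)$ up to direct summands (an honest equivalence when $\cd_{sg}(R)$ is idempotent complete). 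Your handling of the summand/idempotent-completion caveat and of the isolated-singularity case matches the paper.

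There is, however, one step that fails as stated. For non-abelian $G$ the skew group algebra $\mathbb{C}[x,y,z]\# G$ is \emph{not} isomorphic to the Jacobian algebra $J(Q,W)$ of the McKay quiver; it is only Morita equivalent to it, since $\mathbb{C}[x,y,z]\cong\bigoplus_i M_i^{\oplus\dim\rho_i}$ carries multiplicities while $J(Q,W)$, a quotient of a path algebra, is basic. This is not cosmetic: Theorem~\ref{thm:main-construction} requires a quasi-isomorphism of dg algebras $\rho\colon\Gamma(Q,W)\to A$, which would force $H^0(\Gamma(Q,W))=J(Q,W)\cong A$, and no such quasi-isomorphism exists for the skew group algebra when $G$ is non-abelian. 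The repair is exactly the paper's choice of $A$: take $A=J(Q,W)\cong\End_R(M)$, where $M$ is the basic maximal Cohen--Macaulay generator having $R$ as a direct summand with $e=e_0=\mathrm{id}_R$ (Ginzburg's Theorem 4.4.6, quoted in the paper); this $A$ still has global dimension $3$ and $eAe\cong R$, and the rest of your argument goes through verbatim. A smaller imprecision: formality of $\Gamma(Q,W)$ is not ``precisely'' the bimodule $3$-Calabi--Yau property of $J(Q,W)$ in the non-complete setting; the implication actually used (Theorem~\ref{thm:Jacobian-CY==>Ginzburg-formal}) requires $(Q,W)$ to be positively graded, which you should invoke explicitly --- it holds here because $W$ is a linear combination of $3$-cycles, so grading all arrows in degree $1$ works.
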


(4) Assume $k=\mathbb{C}$. Let $R$ be the coordinate ring of a Gorenstein affine toric threefold. Under suitable assumptions, including that $R$ has isolated singularity, the following result is very close to \cite[Theorem 6.3]{AmiotIyamaReiten15}.

\begin{theorem*}[{\ref{thm:singularity-category-of-toric-threefold}}]  The singularity category $\cd_{sg}(R)$ is triangle equivalent to the small cluster category of some quiver with potential.
\end{theorem*}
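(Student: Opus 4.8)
The plan is to reduce the statement to the general dg-model description of $\cd_{sg}(R)$ provided by Theorems~\ref{thm:main-compare-2} and~\ref{thm:main-construction}, applied to a noncommutative crepant resolution of $R$. First I would invoke the known existence of such a resolution: under the suitable assumptions, a Gorenstein affine toric threefold $R$ admits a noncommutative crepant resolution $A=\End_R(M)$ with $R$ a direct summand of $M$, where $A$ is the Jacobian algebra $\mathrm{Jac}(Q,W)$ of a quiver with potential coming from a consistent brane tiling (work of Broomhead and Van den Bergh; see \cite{AmiotIyamaReiten15} and the references therein). In particular $A$ is module-finite over the Noetherian ring $R$, hence Noetherian, $\gldim A<\infty$, $A$ is flat over $k=\mathbb{C}$, and the idempotent $e\in A$ cutting out the summand $R$ is the trivial path at a single vertex $0$ of $Q$, so that $R=eAe$. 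Moreover $A$ is bimodule $3$-Calabi--Yau, so the (complete) Ginzburg dg algebra $\tilde A=\Gamma(Q,W)$ is a cofibrant resolution $\tilde A\xrightarrow{\sim}A$ of dg algebras, with $\tilde e$ the trivial path at $0$; this places us in the hypotheses of Theorem~\ref{thm:main-construction}.

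Next I would feed this into the machinery. Since $A$ is flat over $\mathbb{C}$ with finite global dimension, Theorem~\ref{thm:main-compare-2} yields a triangle equivalence up to direct summands $\cd_{sg}(R)\simeq\per(B)/\cd_{fg}(B)$, where $\Delta_e(A)\simeq\per(B)$. By Theorem~\ref{thm:main-construction} (in its complete form, or via the non-complete variant of Theorem~\ref{t:construction-with-noncomplete-cofibrant-model}), $B$ is quasi-equivalent to the dg quiver algebra $(kQ',d')$ obtained from $(kQ,d)=\Gamma(Q,W)$ by deleting the vertex $0$ and discarding the summands of the differential running through $0$. The crucial point is then to recognise $(kQ',d')$ concretely: I would show that it is quasi-equivalent to the (complete) Ginzburg dg algebra $\Gamma(Q',W')$ of the quiver with potential $(Q',W')$ obtained by \emph{restricting} $(Q,W)$ at the vertex $0$ in the usual sense --- delete $0$, add an arrow $[\beta\alpha]$ for each length-two path $\beta\alpha$ through $0$, and substitute these into $W$. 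This is the dg enhancement of the Derksen--Weyman--Zelevinsky / Amiot--Iyama--Reiten reduction of a quiver with potential at a vertex, and amounts to tracking the three layers of generators of a Ginzburg dg algebra (arrows, their duals in degree $-1$, loops in degree $-2$) and the Calabi--Yau differential through the deletion.

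Finally I would identify the quotient as a cluster category. The suitable assumptions --- in particular that $R$ has an isolated singularity --- force $\mathrm{Jac}(Q',W')$ to be finite-dimensional, i.e. $(Q',W')$ is Jacobi-finite; concretely this is the translation of the Hom-finiteness of the stable category $\underline{\mathrm{CM}}(R)$ of maximal Cohen--Macaulay $R$-modules. For a Jacobi-finite quiver with potential the zeroth cohomology of $B\simeq\Gamma(Q',W')$ is finite-dimensional, so $\cd_{fg}(B)=\cd_{fd}(B)$, and hence $\per(B)/\cd_{fg}(B)$ is precisely Amiot's generalised cluster category of $(Q',W')$ \cite{Amiot09}, i.e. the small cluster category of a quiver with potential. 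Combined with the first step this gives the claim; as in Theorem~\ref{thm:singularity-category-of-quotient-singularity} the ``up to direct summands'' ambiguity is harmless, since the cluster category, being Hom-finite and Krull--Schmidt, is idempotent-complete. Indeed the whole argument runs in parallel with that of Theorem~\ref{thm:singularity-category-of-quotient-singularity}, replacing the McKay/NCCR input for $SL_3(\mathbb{C})$-quotient singularities by the toric one.

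The main obstacle is the middle step: proving that the purely formal operation of Theorem~\ref{thm:main-construction} --- deleting a vertex of a Ginzburg dg algebra together with all paths through it --- produces, up to quasi-equivalence, the Ginzburg dg algebra of the restricted quiver with potential, and matching this operation with the restriction of brane tilings used on the toric side. Secondary points requiring care are the comparison between complete and non-complete Ginzburg dg algebras (so that the relevant version of Theorem~\ref{thm:main-construction} applies cleanly and the resulting cluster category agrees with the one of \cite{Amiot09}), and the verification, from the isolated-singularity hypothesis, that the restricted potential $(Q',W')$ is Jacobi-finite.
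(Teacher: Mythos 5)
Your overall skeleton is the paper's: take Broomhead's dimer-model presentation of a noncommutative resolution, $A=J(Q,W)$ with $e_iJ(Q,W)e_i\cong R$ and $J(Q,W)$ bimodule $3$-Calabi--Yau \cite{Broomhead12}, use the positive grading and Theorem~\ref{thm:Jacobian-CY==>Ginzburg-formal} to see that $\Gamma(Q,W)\to J(Q,W)$ is a quasi-isomorphism, and then feed this into Theorems~\ref{t:singularity-category-vs-relative-singularity-category} and~\ref{t:construction-with-noncomplete-cofibrant-model}. However, the step you single out as the main obstacle is a misstep rather than an obstacle. Theorem~\ref{t:construction-with-noncomplete-cofibrant-model} gives $B=\Gamma(Q,W)/\Gamma(Q,W)e_i\Gamma(Q,W)$, and this quotient \emph{is} the Ginzburg dg algebra $\Gamma(Q',W')$ of the quiver with potential obtained by simply \emph{deleting} the vertex $i$ (delete the vertex, all arrows, dual arrows and the loop at $i$, and all cycles of $W$ through $i$); this is immediate because taking cyclic derivatives is compatible with discarding the summands of the differential involving paths through $i$. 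Your proposed target, the DWZ/AIR-style restriction which adjoins composite arrows $[\beta\alpha]$ for paths through $i$ and substitutes them into $W$, is a different quiver with potential, and its Ginzburg dg algebra is in general not quasi-equivalent to $B$: already in degree $0$, $H^0(B)=J(Q,W)/\langle e_i\rangle$ kills every path through $i$, whereas the composite arrows are genuinely new generators of the restricted Jacobian algebra. So the comparison you plan to prove would fail (and is not needed); the paper's proof of Theorem~\ref{thm:singularity-category-of-toric-threefold} takes $(Q',W')$ to be the vertex-deleted quiver with potential, exactly as in Theorem~\ref{thm:singularity-category-of-quotient-singularity}.

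Two further gaps. First, Jacobi-finiteness of $(Q',W')$ is not needed for the statement: the small cluster category is by definition $\per(\Gamma(Q',W'))/\cd_{fg}(\Gamma(Q',W'))$, which is precisely what Theorem~\ref{t:singularity-category-vs-relative-singularity-category} delivers; and your argument for it does not work, because in the toric setting $e_iJ(Q,W)$ is only known to be reflexive, not maximal Cohen--Macaulay, so finite-dimensionality of $J(Q',W')$ cannot be read off from Hom-finiteness of $\underline{\MCM}(R)$ --- indeed the paper explicitly leaves this point open in the remark following Corollary~\ref{cor:singularity-category-for-toric-threefolds-with-isolated-singularity}, so your identification of the quotient with Amiot's category $\per/\cd_{fd}$ is unsupported. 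Second, your removal of the ``up to direct summands'' ambiguity is backwards: idempotent completeness (or Krull--Schmidtness) of the \emph{target} does not upgrade an equivalence up to summands to an equivalence, since a non-idempotent-complete category can be equivalent up to summands to an idempotent complete one. What is needed, and what the paper uses, is idempotent completeness of $\cd_{sg}(R)$ itself, which for an isolated toric singularity follows from the graded structure via Theorem~\ref{thm:idempotent-completeness-and-Hom-finiteness-of-singularity-category} (this is how Corollary~\ref{cor:singularity-category-for-toric-threefolds-with-isolated-singularity} is deduced).
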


\subsubsection*{Notation}
Throughout let $k$ be a commutative ring.  We use $\ten$ to denote the tensor product over $k$. We compose morphisms in the same way as we compose functions, that is, $gf$ means $f$ followed by $g$. All functors between $k$-categories will be required $k$-linear.

For a $k$-algebra $A$ denote by $\Mod A$ the category of (right) $A$-modules, by $\mod A$ the category of finitely generated $A$-modules, and by $\proj A$ the category of finitely generated projective $A$-modules. Fix an injective generator $E$ of $\Mod k$ (we take $E=k$ if $k$ is a field) and let $D=\Hom_k(?,E)$.

Let $\ct$ be an additive $k$-category. It is said to be \emph{idempotent complete} if every idempotent morphism of $\ct$ has a kernel in $\ct$, and \emph{Hom-finite} if for any two objects $X$ and $Y$ the $k$-module $\Hom_\ct(X,Y)$ is of finite length. Let $\cs$ be a set of objects of $\ct$. Denote by $[\cs]$ the full subcategory of $\ct$ consisting of objects isomorphic to objects in $\cs$, and by $\add(\cs)=\add_\ct(\cs)$ the smallest full subcategory of $\ct$ containing $\cs$ and closed under taking direct summands and finite direct sums. We call $\cs$ a set of additive generators of $\add(\cs)$.

Let $\ct$ be a triangulated $k$-category and $\cs$ a set of objects of $\ct$. Denote by $\thick(\cs)=\thick_\ct(\cs)$ the smallest triangulated subcategory of $\ct$ containing $\cs$ and closed under taking direct summands and finite direct sums and by $\Loc(\cs)=\Loc_\ct(\cs)$ the smallest triangulated subcategory of $\ct$ containing $\cs$ and closed under taking direct summands and all existing direct sums in $\ct$. We call $\thick(\cs)$ the thick subcategory generated by $\cs$ and call $\cs$ a set of classical generators of $\thick(\cs)$. For two sets $\cs$ and $\cs'$ of objects of $\ct$, denote by $\cs*\cs'$ the full subcategory of $\ct$ consisting of objects $X$ such that there is a triangle $S\to X\to S'\to\Sigma S$ with $S\in\cs$ and $S'\in\cs'$.

\medskip
\noindent
\emph{Acknowledgement}. The authors thank Bernhard Keller and Michael Wemyss for answering their questions.

\section{DG quiver algebras}\label{s:cofibrant-model}

In this section we mainly recall some results on differential graded (=dg) algebras whose underlying graded algebra is the path algebra of a graded quiver, especially when an (ordinary) algebra is quasi-isomorphic to such a dg algebra.

\medskip

A \emph{dg $k$-algebra} $A$ is a $\mathbb{Z}$-graded $k$-algebra $A=\bigoplus_{p\in\mathbb{Z}}A^p$ endowed with a differential $d$ of degree $1$ such that the graded Leibniz rule holds
\[
d(ab)=d(a)b+(-1)^p ad(b)
\]
for all $a\in A^p$ ($p\in\mathbb{Z}$) and $b\in A$.

\smallskip
We assume that $k$ is a field in the rest of this section. 

\subsection{DG quiver algebras}\label{ss:dg-quiver-algebra}
Let $Q$ be a quiver. As usual, we denote by $Q_0$ the set of vertices of $Q$ and by $Q_1$ the set of arrows of $Q$. If both $Q_0$ and $Q_1$ are finite sets, we say that $Q$ is a finite quiver. For an arrow $\alpha$, we denote by $s(\alpha)$ and $t(\alpha)$ the source and the target of $\alpha$, respectively. A non-trivial path of $Q$ is a sequence $\alpha_1\cdots\alpha_l$ of arrows such that $s(\alpha_i)=t(\alpha_{i+1})$ for all $1\leq i\leq l-1$. For a non-trivial path $\alpha_1\cdots\alpha_l$ we define its source $s(\alpha_1\cdots\alpha_l)$ as $s(\alpha_l)$ and its target $t(\alpha_1\cdots\alpha_l)$ as $t(\alpha_1)$. For each vertex $i$ of $Q$, there is a trivial path $e_i$ with $s(e_i)=t(e_i)=i$. Assume that $Q_0$ is finite. The path algebra $kQ$ of $Q$ is the $k$-algebra which has  basis all paths of $Q$ and whose multiplication is given by concatenation of paths, \ie for two paths $p$ and $q$ of $Q$,
\[
p\cdot q=\begin{cases} pq & \text{if } s(p)=t(q),\\ 0 & \text{otherwise.}\end{cases}
\]

Let $Q$ be a graded quiver, \ie a quiver such that each arrow $\alpha$ of $Q$ is assigned with an integer $|\alpha|$. Assume that $Q_0$ is finite. Then the path algebra $kQ$ of $Q$ is naturally graded with the degree of a non-trivial path $\alpha_1\cdots\alpha_l$ being $|\alpha_1|+\ldots+|\alpha_l|$ and the degree of any trivial path being $0$. We will often call it the \emph{graded path algebra} of $Q$. 

\smallskip

Let $r\in\mathbb{N}$ and $K$ be the direct product of $r$ copies of $k$ (with standard basis $e_1,\ldots,e_r$). Let $V$ be a graded $K$-$K$-bimodule. Then the tensor algebra $T_K V:=\bigoplus_{p\geq 0} V^{\ten_K p}$ is isomorphic to the graded path algebra $kQ$ of the graded quiver $Q$ which has vertex set $\{1,\ldots,r\}$ and which has $\dim_k e_jV^m e_i$ arrows of degree $m$ from $i$ to $j$. 

\medskip
Following \cite{Oppermann17}, we call a dg $k$-algebra a \emph{dg quiver algebra}  if it is of the form $A=(kQ,d)$, where $Q$ is a
graded quiver with finitely many vertices and
the differential $d$ takes all
trivial paths to $0$.  By the graded Leibniz rule, the differential $d$ is determined by its value on arrows and it takes an arrow $\alpha$ to a linear combination of paths with source $s(\alpha)$ and target $t(\alpha)$.

\subsection{Deleting a vertex} \label{ss:deleting-a-vertex}

Let $A=(kQ,d)$ be a dg quiver algebra  and $i$ a vertex of $Q$. Then $A'=A/AeA$ is again a dg quiver algebra. Its quiver is obtained from $Q$ by deleting the vertex $i$ and its differential is obtained from $d$ by removing all the summands involving paths passing through $i$.

\begin{lemma}\label{lem:deleting-a-contractible-vertex} Keep the notation as above.
Assume that there is $x\in e_ikQe_i$ such that $d(x)=e_i$. Then the natural surjective dg algebra homomorphism $A\to A'$ is a quasi-isomorphism.
\end{lemma}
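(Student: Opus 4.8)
The plan is to show that the surjective dg algebra map $\pi\colon A\to A'=A/Ae_iA$ is a quasi-isomorphism by proving that its kernel, the dg ideal $Ae_iA$, is acyclic. This subspace is a dg ideal precisely because $d(e_i)=0$, so there is a short exact sequence of complexes $0\to Ae_iA\to A\to A'\to 0$, and the long exact cohomology sequence reduces everything to the assertion $H^{*}(Ae_iA)=0$.

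To exploit the hypothesis I would first observe that $\Lambda:=e_iAe_i$ is an \emph{acyclic} dg algebra: its unit is $e_i=d(x)$ with $x$ of degree $-1$, so any cocycle $z\in\Lambda$ satisfies $d(xz)=d(x)z-x\,d(z)=e_iz=z$; the same one-line computation performed inside a unital dg $\Lambda$-module shows that \emph{every} unital dg $\Lambda$-module is acyclic. Setting $f:=1-e_i$, this applies to $Ae_i$, $e_iA$ and, by restriction, to the Peirce pieces $e_iAf$ and $fAe_i$, which are therefore acyclic. Since $d(e_i)=d(f)=0$, the Peirce decomposition $A=e_iAe_i\oplus e_iAf\oplus fAe_i\oplus fAf$ and the analogous one for $A'$ are decompositions of complexes, with $e_iA'e_i=e_iA'f=fA'e_i=0$ and $fA'f=fAf/fAe_iAf$. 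Combining these, $\pi$ is a quasi-isomorphism as soon as the dg ideal $fAe_iAf$ of $fAf$ is acyclic. I would then identify $fAe_iAf$ with $fAe_i\otimes_{\Lambda}e_iAf$ via the multiplication map: the unique factorization of a path through $i$ as (a path issuing from $i$ that never returns to $i$)$\cdot$(a path ending at $i$) realizes $fAe_i$ as a graded‑free right $\Lambda$-module and $e_iAf$ as a graded‑free left $\Lambda$-module, and carries the obvious bases bijectively onto the basis of $fAe_iAf$ given by paths through $i$ with source and target $\neq i$.

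The substantial step — and where I expect the real difficulty to lie — is to deduce that $fAe_i\otimes_{\Lambda}e_iAf$ is acyclic. Because $\Lambda$ is acyclic the derived tensor product $fAe_i\otimes^{\mathbf{L}}_{\Lambda}e_iAf$ vanishes, so it is enough to know that one of the two factors is K‑flat over $\Lambda$; for a graded‑free module this holds as soon as it is semifree, which is automatic whenever the grading of $\Lambda$ (equivalently, of the closed paths at $i$) is bounded. For the general statement one works at the level of chains: the contracting homotopies of $Ae_i$ and of $e_iA$ are given by (signed) right, respectively left, multiplication by $x$, and transporting them to $fAe_i\otimes_{\Lambda}e_iAf$ produces a degree $-1$ operator $h$ with $dh+hd=\id+E$, the error $E$ being assembled from exactly those summands of $d$ that pass through $i$. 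The key point is that the differential of a dg quiver algebra can lower the number of occurrences of $i$ in a path only by using the constant term $e_i$ in $d$ of a loop at $i$, and no such loop occurs inside the reduced paths indexing $fAe_i$ and $e_iAf$; hence $E$ strictly raises the number of occurrences of $i$, and a homological–perturbation argument, convergent by this grading, upgrades $h$ to a genuine contracting homotopy. Either way $Ae_iA$ is acyclic, and the lemma follows.
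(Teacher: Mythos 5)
Your reduction is correct and nicely organised, and it is genuinely different from the paper's argument: since $d(e_i)=0$ the Peirce pieces are subcomplexes, $\Lambda=e_iAe_i$, $e_iAf$ and $fAe_i$ are acyclic because they are unital dg modules over the acyclic dg algebra $\Lambda$ (contracting homotopy given by multiplication with $x$), and the multiplication map $fAe_i\otimes_\Lambda e_iAf\to fAe_iAf$ is an isomorphism of complexes by unique factorisation of a path through $i$ at its passage through $i$ nearest the target. The paper instead writes down one explicit $k$-linear homotopy on the whole kernel $kQe_ikQ$ (insert $x$, with a sign, at that same passage) and checks $dh+hd=\id$ by direct computation. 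However, the step you yourself flag as the substantial one is exactly where your proposal stops being a proof, and it is the whole content of the lemma.

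Argument (a) does not cover the stated generality: $fAe_i$ is free as a \emph{graded} right $\Lambda$-module, but graded-free does not imply K-flat or semifree (the differential of a reduced path may involve other reduced paths multiplied by nontrivial closed paths at $i$, with no well-founded ordering), and the boundedness hypothesis under which you claim semifreeness is not part of the lemma, since arrows of $Q$ may sit in arbitrary degrees. Argument (b) is where the real gap lies: the identity $dh+hd=\id+E$, with $E$ strictly raising the number of passages through $i$, is correct, but the perturbation $h'=h\,(\id+E)^{-1}=h\sum_{n\geq 0}(-E)^n$ requires $E$ to be locally nilpotent, or the module to be complete along the filtration by number of passages through $i$; neither holds, because that filtration is unbounded above and $kQ$ is not complete along it. Concretely, take $Q$ with vertices $i,j$, loops $x$ (degree $-1$, $d(x)=e_i$) and $y$ (degree $1$, $d(y)=-y^2$) at $i$, and arrows $\gamma\colon i\to j$, $\delta\colon j\to i$ of degree $0$ with $d(\gamma)=\gamma y$, $d(\delta)=0$: on $p=\gamma\delta$ one finds $E^n(p)=\gamma\,(yx+xy)^n\,\delta\neq 0$ for every $n$, and $1+yx+xy$ is not invertible in $k\langle x,y\rangle$, so the series you invoke simply does not exist (passing to a completion would change the statement). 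Thus ``convergent by this grading'' is unsupported, and closing the gap requires a further idea --- a filtration that the induced differential actually preserves, or a direct verification; note that your error operator $E$ consists of precisely the delicate terms that the paper's explicit computation has to dispose of, so the hard point of the lemma has not been crossed.
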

\begin{proof} 
It suffices to show that the kernel $V:=kQ e_i kQ$ is contractible. Observe that $V$ has a basis consisting of all paths of $Q$ passing through $i$. Define $h\colon V\to V$ as the unique $k$-linear map taking a path $\alpha_1\cdots\alpha_l$ to $(-1)^{|\alpha_1|+\ldots+|\alpha_r|}\alpha_1\cdots\alpha_rx\alpha_{r+1}\cdots\alpha_l$, where $r$ is the minimal integer such that the source of $\alpha_r$ is $i$. Then 
\begin{align*}
(d_V\circ h&+h\circ d_V)(\alpha_1\cdots\alpha_l)=d_V(h(\alpha_1\cdots\alpha_l))+h(d_V(\alpha_1\cdots\alpha_l))\\
&=d_V((-1)^{|\alpha_1|+\ldots+|\alpha_r|}\alpha_1\cdots\alpha_rx\alpha_{r+1}\cdots\alpha_l)\\
&\qquad+h(\sum_{j=1}^l(-1)^{|\alpha_1|+\ldots+|\alpha_{j-1}|}\alpha_1\cdots\alpha_{j-1}d(\alpha_j)\alpha_{j+1}\cdots\alpha_l)\\
&=(-1)^{|\alpha_1|+\ldots+|\alpha_r|}(\sum_{j=1}^r(-1)^{|\alpha_1|+\ldots+|\alpha_{j-1}|}\alpha_1\cdots\alpha_{j-1}d(\alpha_j)\alpha_{j+1}\cdots\alpha_rx\alpha_{r+1}\cdots\alpha_l\\
&\qquad+(-1)^{|\alpha_1|+\ldots+|\alpha_r|}\alpha_1\cdots\alpha_rd(x)\alpha_{r+1}\cdots\alpha_l\\
\end{align*}
\begin{align*}
&\qquad+\sum_{j=r+1}^l(-1)^{|\alpha_1|+\ldots+|\alpha_r|+1+|\alpha_{r+1}+\ldots+\alpha_{j-1}}\alpha_1\cdots\alpha_rx\alpha_{r+1}\cdots\alpha_{j-1}d(\alpha_j)\alpha_{j+1}\cdots\alpha_l)\\
&+(\sum_{j=1}^r(-1)^{|\alpha_1|+\ldots+|\alpha_{j-1}|}(-1)^{|\alpha_1|+\ldots+|\alpha_r|+1}\alpha_1\cdots\alpha_{j-1}d(\alpha_j)\alpha_{j+1}\cdots\alpha_rx\alpha_{r+1}\cdots\alpha_l\\
&\qquad+\sum_{j=r+1}^l(-1)^{|\alpha_1|+\ldots+|\alpha_{j-1}|}(-1)^{|\alpha_1|+\ldots+|\alpha_r|}\alpha_1\cdots\alpha_rx\alpha_{r+1}\cdots\alpha_{j-1}d(\alpha_j)\alpha_{j+1}\cdots\alpha_l)\\
&=\alpha_1\cdots\alpha_l.
\end{align*}
Therefore $\id_V$ is null-homotopic, and thus $V$ is contractible.
\end{proof}

\subsection{Resolutions of algebras}\label{ss:cofibrant-model}

Let $A=kQ/I$ be a $k$-algebra, where $Q$ is a finite quiver and $I$ is an ideal contained in the square of the ideal generated by arrows. For example, any finite-dimensional $k$-algebra is of this form. Let $R$ be a system of relations for $I$ in the sense of \cite[Section 1.2]{Bongartz83} and we assume that $R$ is finite. 
The following result is from the proof of~\cite[Theorem 6.10]{Keller11} and~\cite{Keller11c} (see also \cite[Construction 2.6 and Remark 2.9]{Oppermann17}). 

\begin{lemma}\label{lem:non-complete-cofibrant-model}
Keep the notation in the preceding paragraph. Then there is a quasi-isomorphism $\rho\colon \tilde{A}\to A$ of dg algebras, where $\tilde{A}=(k\tilde{Q},d)$ is a dg quiver algebra, such that $\tilde{Q}$ is a graded quiver which has the same vertices as $Q$ and whose arrows are concentrated in non-positive degrees such that the arrows of degree $0$ are precisely the arrows of $Q$, the arrows of degree $-1$ are of the form $\rho_r\colon s(r)\to t(r)$ for $r\in R$ and the differential $d$ takes $\rho_r$ to $r$ for $r\in R$.

Assume in addition that $A$ has global dimension $2$ and that $R$ consists of a set of representatives of $I/(I\m+\m I)$, where $\m$ is the ideal of $kQ$ generated by arrows.  Then $\tilde{Q}$ is concentrated in degrees $0$ and $-1$.
\end{lemma}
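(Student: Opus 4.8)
\emph{Strategy.} The heart of the matter is that $\tilde{A}=(k\tilde{Q},d)$ is a \emph{minimal} cofibrant model of $A$: the differential $d$ maps every arrow of $\tilde{Q}$ into the square of the ideal $\m\subseteq k\tilde{Q}$ generated by the arrows. Indeed $d$ kills the degree-$0$ arrows, one has $d(\rho_r)=r\in I\subseteq\m^{2}$ by the standing hypothesis that $I\subseteq\m^{2}$, and in the construction underlying Lemma~\ref{lem:non-complete-cofibrant-model} the arrows of degree $-p$ with $p\geq 2$ are adjoined one homological step at a time in order to kill the degree-$(-p+1)$ cohomology of the piece already built, the value of $d$ on each new arrow being a cocycle representative of an element of a minimal $kQ$-bimodule generating set of that cohomology; for $p\geq 2$ such a representative lies in $\m^{2}$. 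Since we assume moreover that $R$ represents a $k$-basis of $I/(\m I+I\m)$, there is no redundancy in degree $-1$ either, and $\tilde{A}$ is \emph{the} minimal cofibrant model of $A$.

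\emph{Using minimality.} For a minimal model the underlying graded quiver is read off from the Yoneda algebra of $A$: writing $S_i$ for the simple $A$-module at the vertex $i$ and $K=\prod_{i\in Q_0}ke_i$, the number of arrows of $\tilde{Q}$ of degree $-p$ between $i$ and $j$ equals $\dim_k\Ext^{p+1}_A(S_i,S_j)$ (the source/target convention and the variance are immaterial here). One way to see this: the quasi-isomorphism $\tilde{A}\xrightarrow{\sim}A$ identifies $K\lten_{\tilde{A}}K$ with $K\lten_A K$; minimality of $\tilde{A}$ forces the internal differential of the former to vanish on the graded arrow space $V$, so that $V$ sits inside $\operatorname{Tor}^{\tilde{A}}_{1}(K,K)$ with its internal grading intact; matching the bar and internal gradings against the fact that $\operatorname{Tor}^{A}_{n}(K,K)$ is concentrated in internal degree $0$ recovers the degree-$(-p)$ part of $V$ between $i$ and $j$ as $D\,\Ext^{p+1}_A(S_i,S_j)$. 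See \cite{Keller11}. Now $\gldim A=2$ means that $\Ext^{n}_A(M,N)=0$ for all $A$-modules $M,N$ and all $n\geq3$; hence $\Ext^{p+1}_A(S_i,S_j)=0$ whenever $p\geq2$, so $\tilde{Q}$ has no arrows in degrees $\leq-2$. Together with the first part of the lemma this shows $\tilde{Q}$ is concentrated in degrees $0$ and $-1$.

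\emph{Main obstacle, and an alternative.} The one genuinely technical point is the middle paragraph — pinning down the degree-$(-p)$ part of the minimal model as $D\,\Ext^{p+1}_A(S_i,S_j)$, which demands careful bookkeeping of the homological and internal gradings through $\tilde{A}\simeq A$ and, upstream, the claim that the construction in Lemma~\ref{lem:non-complete-cofibrant-model} can be run minimally. An alternative that keeps matters elementary is to prove directly that, under $\gldim A=2$ with $R$ minimal, the dg subalgebra $\tilde{A}^{[0,-1]}=(k\tilde{Q}^{[0,-1]},d)$ supported on the arrows of degrees $0$ and $-1$ is already quasi-isomorphic to $A$; then the inductive construction halts after the degree-$(-1)$ step and no arrows of lower degree are ever created. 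For this one identifies the cohomology of $\tilde{A}^{[0,-1]}$, viewed as a complex of $kQ$-bimodules, with the homology of the truncated complex $0\to A\otimes_K kR\otimes_K A\to A\otimes_K kQ_1\otimes_K A\to A\otimes_K A\to A\to 0$, the beginning of the minimal $A$-bimodule resolution of $A$; this complex is exact exactly when that resolution has length $\leq2$, which follows from $\gldim A=2$ since the length of the minimal bimodule resolution equals $\gldim A$ (here $K$ is separable over $k$). In either approach the global-dimension hypothesis is precisely what collapses $\tilde{Q}$ into degrees $0$ and $-1$.
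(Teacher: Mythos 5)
The paper itself gives no proof of this lemma: it is imported wholesale from the proof of \cite[Theorem 6.10]{Keller11}, its correction \cite{Keller11c}, and \cite[Construction 2.6 and Remark 2.9]{Oppermann17}, and your sketch runs along the same lines as those sources (inductive adjunction of generators, minimality, counting generators by $\Ext^{p+1}_A(S_i,S_j)$). But as written your argument has gaps at exactly the points those references exist to settle. Your main route stands or falls with the sentence ``for $p\geq 2$ such a representative lies in $\m^{2}$'', which is asserted without proof and is not automatic: a cocycle of degree $-p+1$ may contain a linear term (a bare arrow of degree $-p+1$), and then the generator adjoined to kill its class has differential outside $\m^{2}$; removing such terms is the change-of-generators step in the existence proof of minimal models, which you neither carry out nor reduce to a quotable statement. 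Moreover the hypothesis on $R$ enters in the wrong place in your write-up: its role is not merely ``no redundancy in degree $-1$'', it is exactly what makes the first critical step minimal. Indeed, at the stage where degree $-2$ generators are adjoined, every degree $-1$ element is of the form $z=\sum_{r}c_{r}\rho_{r}+z'$ with $z'\in\m^{2}$, and $dz=0$ forces $\sum_{r}c_{r}r\in I\m+\m I$, hence $c_{r}=0$ for all $r$ because $R$ represents a basis of $I/(I\m+\m I)$; so every degree $-1$ cocycle lies in $\m^{2}$ and the degree $-2$ generators are automatically minimal. Without an argument of this kind for all $p$ (or an appeal to the general existence of minimal models over the semisimple base $K=\prod_{i\in Q_0}ke_i$, together with a check that such a model can be normalised so that its degree-$0$ arrows are literally $Q_1$ and $d\rho_r=r$), the $\Ext$-counting paragraph has nothing to feed on.

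Your ``elementary alternative'' has the same difficulty in a different guise. The cohomology of $\tilde{A}^{[0,-1]}$ lives in all non-positive degrees, so it cannot be ``identified'' with the homology of a three-term complex; what you actually need is the implication: if $0\to A\ten_K kR\ten_K A\to A\ten_K kQ_1\ten_K A\to A\ten_K A\to A\to 0$ is exact, then $\tilde{A}^{[0,-1]}\to A$ is a quasi-isomorphism. Proving that implication (say by a filtration or comparison argument on the weight decomposition of $\tilde{A}^{[0,-1]}$ by the number of $\rho$-letters) is the substantive content, not bookkeeping, and it is precisely what \cite{Keller11c} and \cite[Construction 2.6]{Oppermann17} supply. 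Finally, ``the length of the minimal bimodule resolution equals $\gldim A$'' is standard for finite-dimensional algebras with separable semisimple quotient, but the lemma allows infinite-dimensional $kQ/I$ (the paper applies it to preprojective algebras of non-Dynkin quivers); the robust formulation of what you need is $\Tor^{A}_{3}(K,K)=0$ together with the minimality of $R$, which the global-dimension hypothesis does give.
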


\begin{remark}\label{rem:cofibrant-model-for-Koszul-algebra}
Assume that all elements of $R$ are homogeneous with respect to path lengths. Then we can make $A$ a graded algebra by putting all arrows of $Q$ in degree $1$. If as such a graded algebra $A$ is Koszul (see \cite[Definition 1.2.1]{BeilinsonGinzburgSoergel96}), then the following equality holds by \cite[Theorem 2.10.1]{BeilinsonGinzburgSoergel96}
\begin{align}
\label{eq:minimal-relations}
\dim_k \Ext^2_A(S_j,S_i)=\# e_jRe_i,
\end{align}
for any pair of vertices $(i,j)$ of $Q$, where $S_i=A/(1-e_i)$. It follows that $R$ consists of a set of representatives of $I/(I\m +\m I)\cong \bigoplus_{i,j\in Q_0}D\Ext^2_A(S_j,S_i)$.
\end{remark}

\begin{example}\label{ex:s(2,2)}
Let $A=kQ/(R)$, where $Q$ is the quiver
\[
\xymatrix@C=3pc{
1\ar@<.5ex>[r]^{\alpha}&2\ar@<.5ex>[l]^{\beta}
}
\]
and $R=\{\alpha\beta\}$. The global dimension of $A$ is $2$. Applying Lemma~\ref{lem:non-complete-cofibrant-model}, we obtain the dg quiver algebra $(k\tilde{Q},d)$, where $\tilde{Q}$ is the quiver
\[
\xymatrix@C=3pc{
1\ar@<.5ex>[r]^{\alpha}&2\ar@<.5ex>[l]^{\beta}\ar@{-->}@(ur,dr)^\gamma
}
\]
with $\deg(\alpha)=\deg(\beta)=0$ and $\deg(\gamma)=-1$. The differential $d$ takes $\gamma$ to $\alpha\beta$ and takes $\alpha$ and $\beta$ to $0$.
\end{example}

\section{Derived categories of dg algebras}

In this section, we follow \cite{Keller94,Keller06d} to recall derived categories of dg algebras.

\subsection{Derived categories}
Let $A$ be a dg $k$-algebra. We view $A$ as a dg category with one object whose endomorphism algebra is $A$ and apply results on dg categories to $A$ without further remark.

A \emph{dg $A$-module} is a (right) graded module over the graded algebra $A$ endowed with a differential $d_M$ of degree $1$ such that the graded Leibniz rule holds
\[
d_M(ma)=d_M(m)a+(-1)^p md(a)
\]
for all $m\in M^p$ ($p\in\mathbb{Z}$) and $a\in A$.
For two (right) dg $A$-modules $M$ and $N$, define the complex $\cHom_A(M,N)$ componentwise as
\begin{eqnarray*}\cHom_A^p(M,N)=\left.\left\{f\in\prod_{q\in\mathbb{Z}}\Hom_k(M^q,N^{p+q}) \, \right|
\, f(ma)=f(m)a\right\},\end{eqnarray*} with
differential given by 
\[
d(f)=d_N\circ f-(-1)^p f\circ d_M\]
for $f\in\cHom_A^p(M,N)$. For example, $\cHom_A(A,N)=N$. The complex $\cEnd_A(M):=\cHom_A(M,M)$ with the
composition of maps as multiplication is a dg $k$-algebra. The \emph{dg category of dg $A$-modules} $\cc_{dg}(A)$ has dg $A$-modules as objects and has $\cHom_A(M,N)$ as morphisms between dg $A$-modules $M$ and $N$. The \emph{homotopy category of dg $A$-modules} $\ch(A)$ is the homotopy category of $\cc_{dg}(A)$, \ie the objects of $\ch(A)$ are the same as $\cc_{dg}(A)$, and the morphism space in $\ch(A)$ between two dg $A$-modules $M$ and $N$ is defined as $\Hom_{\ch(A)}(M,N)=H^0\cHom_A(M,N)$. The \emph{derived category of dg $A$-modules} $\cd(A)$ is the triangle quotient of $\ch(A)$ by the subcategory of acyclic dg $A$-modules, \ie those dg $A$-modules with trivial cohomologies. Denote by $\pi=\pi_A\colon \ch(A)\to \cd(A)$ the projection functor.

\subsection{$\ch$-projective and $\ch$-injective dg modules} Let $A$ be a dg $k$-algebra. 
A dg $A$-module $M$ is said to be \emph{$\ch$-projective} (respectively, \emph{$\ch$-injective}) if $\cHom_A(M,?)$ (respectively, $\cHom_A(?,M)$) preserves acyclicity. It is easy to see that $A_A$ is $\ch$-projective and $D({}_AA)$ is $\ch$-injective. Let $M$ be a dg $A$-module. By \cite[Theorem 3.1]{Keller94}, there is a quasi-isomorphism $\mathbf{p}M\to M$ of dg $A$-modules with $\mathbf{p}M$ being $\ch$-projective. This extends to a triangle functor $\mathbf{p}=\mathbf{p}_A\colon \cd(A)\to \ch(A)$, which is also denoted by $\mathbf{p}$, which is left adjoint to $\pi$. By \cite[Theorem 3.2]{Keller94}, there is a quasi-isomorphism $M\to \mathbf{i}M$ of dg $A$-modules with $\mathbf{i}M$ being $\ch$-injective. This extends to a triangle functor $\mathbf{i}=\mathbf{i}_A\colon \cd(A)\to \ch(A)$, which right adjoint to $\pi$. In particular, there are canonical isomorphisms
\begin{align*}
\Hom_{\cd(A)}(M,N)&\cong\Hom_{\ch(A)}(M,\mathbf{i}N)=H^0\cHom_A(M,\mathbf{i}N)\\
&\cong\Hom_{\ch(A)}(\mathbf{p}M,N)=H^0\cHom_A(\mathbf{p}M,N).
\end{align*}

\subsection{Subcategories} Let $A$ be a dg $k$-algebra. 
Let $\per(A)=\thick_{\cd(A)}(A_A)$ be the thick subcategory of $\cd(A)$ generated by $A_A$. When $A$ is a $k$-algebra (viewed as a dg algebra concentrated in degree $0$), a dg $A$-module is exactly a complex of $A$-modules, and $\cd(A)=\cd(\Mod A)$, the derived category of the abelian category $\Mod A$. In this case, $\per(A)$ is triangle equivalent to $
\ch^b(\proj A)$, the homotopy category of bounded complexes of finitely generated projective $A$-modules. When $k$ is a field, let $\cd_{fd}(A)$ be the full subcategory of $\cd(A)$ consisting of dg $A$-modules $M$ whose total cohomology $H^*(M)$ is finite-dimensional over $k$. If $A$ is a finite-dimensional algebra over the field $k$, then the canonical functor $\cd^b(\mod A)\rightarrow \cd_{fd}(A)$ is a triangle equivalence.

\begin{lemma}\label{lem:derived-equivalence-restricts-to-per-and-dfd}
Let $B$ be another dg $k$-algebra and let $F\colon \cd(B)\to\cd(A)$ be a triangle equivalence. Then $F$ restricts to a triangle equivalence $\per(B)\to\per(A)$. If $k$ is a field, then $F$ restricts to a triangle equivalence $\cd_{fd}(B)\to\cd_{fd}(A)$.
\end{lemma}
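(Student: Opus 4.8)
The plan is to give intrinsic descriptions of $\per$ and $\cd_{fd}$ inside the ambient derived categories, so that any triangle equivalence automatically respects them. For $\per$, I would invoke that $\cd(B)$ and $\cd(A)$ are compactly generated triangulated categories with compact generators $B_B$ and $A_A$, and that the subcategory of compact objects of $\cd(A)$ is exactly $\per(A)=\thick_{\cd(A)}(A_A)$, and likewise for $B$ (see \cite{Keller94}, together with the standard fact that the compact objects of a compactly generated triangulated category form the thick subcategory generated by a compact generator). Being an equivalence of categories, $F$ preserves all set-indexed coproducts, hence sends compact objects to compact objects: for compact $X\in\cd(B)$ and a family $(Y_i)$ in $\cd(A)$ one has $\Hom_{\cd(A)}(FX,\bigoplus_i Y_i)\cong\Hom_{\cd(B)}(X,\bigoplus_i F^{-1}Y_i)\cong\bigoplus_i\Hom_{\cd(B)}(X,F^{-1}Y_i)\cong\bigoplus_i\Hom_{\cd(A)}(FX,Y_i)$. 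Applying the same to $F^{-1}$, the equivalence $F$ restricts to a triangle equivalence between the subcategories of compact objects, that is, to $\per(B)\to\per(A)$. (One could argue more directly that $F(B_B)$ is compact, hence lies in $\per(A)$, so $F(\per(B))\subseteq\per(A)$, and symmetrically for $F^{-1}$.)

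For the second statement, assume $k$ is a field. The key point is the intrinsic characterisation: for a dg $B$-module $M$ one has $M\in\cd_{fd}(B)$ if and only if $\bigoplus_{p\in\Z}\Hom_{\cd(B)}(P,\Sigma^{p}M)$ is finite-dimensional over $k$ for every $P\in\per(B)$. Indeed, taking $P=B_B$ and using $\Hom_{\cd(B)}(B_B,\Sigma^{p}M)=H^{p}(M)$ recovers precisely the condition that $H^{*}(M)$ is finite-dimensional; conversely, any $P\in\per(B)$ is a direct summand of an object obtained from finitely many shifts of $B_B$ by finitely many triangles, so the long exact sequences show $\bigoplus_p\Hom_{\cd(B)}(P,\Sigma^pM)$ is built by finitely many extensions from subspaces of finite direct sums of shifted copies of $H^{*}(M)$, hence is finite-dimensional. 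The same holds with $B$ replaced by $A$. Since $F$ induces $k$-linear isomorphisms $\Hom_{\cd(B)}(P,\Sigma^pM)\cong\Hom_{\cd(A)}(FP,\Sigma^pFM)$ and, by the first part, restricts to an equivalence $\per(B)\to\per(A)$, the finite-dimensionality condition holds for $M$ in $\cd(B)$ iff it holds for $FM$ in $\cd(A)$; that is, $M\in\cd_{fd}(B)$ iff $FM\in\cd_{fd}(A)$. Hence $F$ restricts to a fully faithful functor $\cd_{fd}(B)\to\cd_{fd}(A)$, which is essentially surjective by the same argument applied to $F^{-1}$, and therefore a triangle equivalence.

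All the ingredients are standard. The only points that require a little care are the intrinsic identification $\per(A)=\cd(A)^{c}$ together with the compact generation of $\cd(A)$, and the elementary bookkeeping that an object of $\per$ is a direct summand of a finite iterated extension of shifts of the free module (needed for the ``conversely'' direction in the characterisation of $\cd_{fd}$). I do not anticipate any genuine obstacle.
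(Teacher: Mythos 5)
Your proposal is correct and follows essentially the same route as the paper: identify $\per$ with the compact objects (so any triangle equivalence restricts there), and characterise $\cd_{fd}$ intrinsically by finite-dimensionality of $\bigoplus_{p}\Hom(P,\Sigma^{p}M)$ for all perfect $P$, proved by d\'evissage. You merely spell out the standard details (preservation of coproducts by an equivalence, the d\'evissage bookkeeping) that the paper leaves implicit.
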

\begin{proof}
It is known that a triangle equivalence restricts to a triangle equivalence of the full subcategory of compact objects. The first statement then follows from the fact that $\per(A)$ is the full subcategory of $\cd(A)$ of compact objects, see \cite[Remark 5.3(a)]{Keller94}.

Assume that $k$ is a field. It follows by d\'evissage that a dg $A$-module $M$ belongs to $\cd_{fd}(A)$ if and only if $\bigoplus_{p\in\mathbb{Z}}\Hom_{\cd(A)}(P,\Sigma^p M)$ is finite-dimensional over $k$ for any $P\in\per(A)$. The second statement follows from this fact together with the first statement.
\end{proof}

For a dg $A$-module $M$, each cohomology $H^p(M)$ admits an $H^0(A)$-module structure. Let $\cd_{fg}(A)$ denote the full subcategory of $\cd(A)$ consisting of dg $A$-modules $M$ such that $H^*(M)$ is finitely generated over $H^0(A)$. If $H^0(A)$ is right noetherian as a ring, then this is a triangulated subcategory of $\cd(A)$. If $A$ is a $k$-algebra and is right noetherian as a ring, then the canonical functor $\cd^b(\mod A)\rightarrow\cd_{fg}(A)$ is a triangle equivalence. 
When $k$ is a field and $H^0(A)$ is finite-dimensional over $k$, $\cd_{fg}(A)$ is the same as $\cd_{fd}(A)$.

\subsection{Standard functors} Let $A$ and $B$ be dg $k$-algebras.
Let $M$ be a dg $B$-$A$-bimodule (\ie a dg $B^{op}\ten A$-module). Then we have an adjoint pair of dg functors 
\[
\begin{xy}
\SelectTips{cm}{}
\xymatrix{\cc_{dg}(B)\ar@<.7ex>[rr]^{?\ten_B M}&&\cc_{dg}(A)\ar@<.7ex>[ll]^{\cHom_A(M,?)}.}
\end{xy}
\]
If $M$ is $\ch$-projective over $A$, then $?\ten_B M$ preserves $\ch$-projectivity.
This adjoint pair of dg functors induces an adjoint pair of triangle functors
\[
\begin{xy}
\SelectTips{cm}{}
\xymatrix{\ch(B)\ar@<.7ex>[rr]^{?\ten_B M}&&\ch(A)\ar@<.7ex>[ll]^{\cHom_A(M,?)}.}
\end{xy}
\]
If $M$ is $\ch$-projective over $A$, then $\cHom_A(M,?)$ preserves acyclicity, so it induces a triangle functor $\cd(A)\to\cd(B)$, still denoted by $\cHom_A(M,?)$. If $M$ is $\ch$-projective over $B^{op}$, then $?\ten_B M$ preserves acyclicity, so it induces a triangle functor $\cd(B)\to\cd(A)$, still denoted by $?\ten_B M$. In general, we obtain adjoint derived functors
\[
\begin{xy}
\SelectTips{cm}{}
\xymatrix{\cd(B)\ar@<.7ex>[rr]^{?\lten_B M}&&\cd(A)\ar@<.7ex>[ll]^{\RHom_A(M,?)}}
\end{xy}
\]
as the following compositions
\[
\begin{xy}
\SelectTips{cm}{}
\xymatrix{\cd(B)\ar@<.7ex>[rr]^{\mathbf{p}_B}&&\ch(B)\ar@<.7ex>[rr]^{?\ten_B M}\ar@<.7ex>[ll]^{\pi_B}&&\ch(A)\ar@<.7ex>[ll]^{\cHom_A(M,?)}\ar@<.7ex>[rr]^{\pi_A}&&\cd(A).\ar@<.7ex>[ll]^{\mathbf{i}_A}
}
\end{xy}
\]
If $M$ is $\ch$-projective over $A$, then $\RHom_A(M,?)$ is isomorphic to $\cHom_A(M,?)$. If $M$ is $\ch$-projective over $B^{op}$, then $?\lten_B M$ is isomorphic to $?\ten_B M$.

Let $M$ be an $\ch$-projective or $\ch$-injective dg $A$-module and put $B=\cEnd_A(M)$. Then $M$ becomes a dg $B$-$A$-bimodule. 
By \cite[Lemma 4.2]{Keller94}, the adjoint pair 
\[
\begin{xy}
\SelectTips{cm}{}
\xymatrix{\cd(B)\ar@<.7ex>[rr]^{?\lten_B M}&&\cd(A)\ar@<.7ex>[ll]^{\RHom_A(M,?)}}
\end{xy}
\]
restricts to triangle equivalences
\[
\begin{xy}
\SelectTips{cm}{}
\xymatrix{\per(B)\ar@<.7ex>[rr]^(0.4){?\lten_B M}&&\thick_{\cd(A)}(M)\ar@<.7ex>[ll]^(0.55){\RHom_A(M,?)},}
\end{xy}
\]
and in the case when $M\in\per(A)$, to triangle equivalences
\[
\begin{xy}
\SelectTips{cm}{}
\xymatrix{\cd(B)\ar@<.7ex>[rr]^(0.4){?\lten_B M}&&\Loc_{\cd(A)}(M)\ar@<.7ex>[ll]^(0.55){\RHom_A(M,?)}.}
\end{xy}
\]

\subsection{Quasi-equivalences}\label{ss:quasi-equiv}
Let $A$ and $B$ be two dg $k$-algebras. A \emph{quasi-functor} $X\colon  B\to A$ is a dg $B$-$A$-bimodule $X$ such that $?\lten_B X\colon \cd(B)\to \cd(A)$ restricts to a functor $[B_B]\to[A_A]$, equivalently, there is an element $x\in Z^0(X)$ such that the map $A\to X, ~a\to xa$, is a quasi-isomorphism of dg $A$-modules. 
It is a \emph{quasi-equivalence} if in addition $?\lten_B X\colon \cd(B)\to \cd(A)$ is a triangle equivalence, equivalently, there is an element $x\in Z^0(X)$ such that the map $A\to X,~a\to xa$, is a quasi-isomorphism of dg $A$-modules and the map $B\to X,~a\to bx$, is a quasi-isomorphism of dg $B^{op}$-modules. See \cite[Sections 7.1 and 7.2]{Keller94} for more equivalent conditions.

Typical examples of quasi-equivalence arise from different resolutions of the same dg module. Let $M$ be a dg $A$-module and let $M'$ and $M''$ be two $\ch$-projective or $\ch$-injective resolutions of $M$ over $A$. Put $B'=\cEnd_A(M')$ and $B''=\cEnd_A(M'')$. If $M''$ is $\ch$-projective or $M'$ is $\ch$-injective, then $X=\cHom_A(M'',M')$ is a quasi-equivalence from $B'$ to $B''$. We can take $x$ to be any fixed quasi-isomorphism $M''\to M'$.

\subsection{Non-positive dg algebras: the standard t-structure and finiteness}\label{ss:standard-t-co-t-str}
A dg $k$-algebra $A$ is said to be \emph{non-positive} if  the degree $p$ component $A^p$ vanishes for all $p>0$. See \cite{KellerNicolas11,BruestleYang13,KoenigYang14,SuYang16} for previous study on non-positive dg algebras.

\smallskip
Let $A$ be a non-positive dg $k$-algebra. The canonical projection $A\rightarrow H^0(A)$ is a homomorphism of
dg algebras. We view a module over $H^0(A)$ as a dg module over $A$ via this homomorphism.
This defines a natural functor $\Mod H^0(A)\rightarrow \cd(A)$. We will identify $\Mod H^0(A)$ with its essential image in $\cd(A)$.

Let $\cd^{\leq 0}$ (respectively, $\cd^{\geq 0}$) be the full subcategory of $\cd(A)$ consisting of those dg modules whose cohomologies are concentrated in non-positive (respectively, non-negative) degrees. Then $(\cd^{\leq 0},\cd^{\geq 0})$ is a $t$-structure on
$\cd(A)$. The functor $H^0\colon \cd(A)\to \Mod H^0(A)$ restricts to an equivalence from the heart to $\Mod H^0(A)$. The following result is a generalisation of  \cite[Propositions 2.1]{KalckYang16}. The proofs in \emph{loc.cit.} can be adapted.
The equality $\cp^{\leq 0}=\cp_{\leq 0}$ in (b) follows from \cite[Corollary 2.4]{KalckYang16}. Recall that the category $\cd_{fg}(A)$ is a triangulated subcategory of $\cd(A)$ provided that $H^0(A)$ is right noetherian as a ring.

\begin{proposition}\label{prop:standard-t-str}
Let $A$ be a non-positive dg $k$-algebra. Assume that $H^0(A)$ is right noetherian as a ring. 
\begin{itemize}
\item[(a)] 
 Let $\cd_{fg}^{\leq 0}=\cd^{\leq 0}\cap \cd_{fg}(A)$ (respectively, $\cd_{fg}^{\geq 0}=\cd^{
 \geq 0}\cap  \cd_{fd}(A)$). Then $(\cd_{fg}^{\leq 0},\cd_{fg}^{\geq 0})$ is a $t$-structure on
$\cd_{fg}(A)$ and the functor $H^0$ restricts to an equivalence from its heart to $\mod H^0(A)$. The truncation functors are the standard truncations $\sigma^{\leq 0}$ and $\sigma^{\geq 0}$. 
Moreover, this is a bounded $t$-structure, \ie $\cd_{fg}(A)=\thick(\mod H^0(A))$.
\item[(b)] Assume further that $\cd_{fg}(A)\subseteq\per(A)$. Let $\cp^{\leq 0}=\cd^{\leq 0}\cap \per(A)$ (respectively, $\cp^{\geq 0}=\cd^{\geq 0}\cap \per(A)$). Then $\cp^{\leq 0}=\cp_{\leq 0}$, $\cp^{\geq 0}=\cd_{fg}^{\geq 0}$, and $(\cp^{\leq 0},\cp^{\geq 0})$ is a $t$-structure on
$\per(A)$ which has the same heart as $(\cd_{fg}^{\leq 0},\cd_{fg}^{\geq 0})$.
\end{itemize}
\end{proposition}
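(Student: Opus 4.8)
\emph{Plan.} I would obtain both $t$-structures by restricting the standard $t$-structure $(\cd^{\leq 0},\cd^{\geq 0})$ of $\cd(A)$ recalled above, following the proof of \cite[Proposition~2.1]{KalckYang16} but with ``finite-dimensional over $k$'' systematically replaced by ``finitely generated over $H^0(A)$''. Right noetherianness of $H^0(A)$ is used to propagate finite generation through sub-objects, and non-positivity of $A$ (already built into the ambient statement $H^0\colon$ heart $\simeq\Mod H^0(A)$) controls the cohomology of the standard truncations.

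\emph{Part (a).} First I would check that $\cd_{fg}(A)$ is stable under the standard truncation functors $\sigma^{\leq 0}$ and $\sigma^{\geq 1}$. For $M\in\cd_{fg}(A)$, since $A$ is non-positive the subspace $\bigoplus_{p\leq 0}H^p(M)$ is a graded $H^*(A)$-submodule of $H^*(M)$ and $\bigoplus_{p\geq 1}H^p(M)$ is a graded quotient; both are then finitely generated over the noetherian ring $H^0(A)$, and they are the cohomologies of $\sigma^{\leq 0}M$ and $\sigma^{\geq 1}M$. Hence the standard truncation triangle $\sigma^{\leq 0}M\to M\to\sigma^{\geq 1}M\to\Sigma\sigma^{\leq 0}M$ lies in $\cd_{fg}(A)$, and together with the orthogonality $\Hom(\cd^{\leq 0},\cd^{\geq 1})=0$ inherited from the ambient $t$-structure this shows $(\cd_{fg}^{\leq 0},\cd_{fg}^{\geq 0})$ is a $t$-structure on $\cd_{fg}(A)$ with truncation functors $\sigma^{\leq 0},\sigma^{\geq 0}$. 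Its heart is $\cd^{\leq 0}\cap\cd^{\geq 0}\cap\cd_{fg}(A)$, on which $H^0$ restricts the ambient equivalence to an equivalence onto $\mod H^0(A)$. Finally, every object of $\cd_{fg}(A)$ has bounded cohomology, so iterating the truncation triangles (d\'evissage) places it in $\thick(\mod H^0(A))$; the reverse inclusion being obvious, $\cd_{fg}(A)=\thick(\mod H^0(A))$ and the $t$-structure is bounded.

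\emph{Part (b).} The equality $\cp^{\leq 0}=\cp_{\leq 0}$ is \cite[Corollary~2.4]{KalckYang16}, whose proof uses only non-positivity of $A$. The key finiteness input is the following: for $M\in\per(A)$, if $n$ is the largest integer with $H^n(M)\neq 0$ — which exists because $\per(A)\subseteq\bigcup_m\cd^{\leq m}$ — then $H^n(M)$ is finitely generated over $H^0(A)$. I would prove this by representing $M$ by a finite semi-free dg $A$-module $P$ concentrated in degrees $\leq n$, obtained from an arbitrary finite semi-free model by successively cancelling top-degree generators against the differential (possible since $H^{>n}(P)=0$ forces $d$ to be surjective in the top degree). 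Then $P^n$ is a finitely generated free $A^0$-module, and $H^n(M)=P^n/d(P^{n-1})$ is a finitely generated $A^0$-module on which $d(A^{-1})$ acts as zero (because $x\cdot d(a)=\pm d(xa)$ for $x\in P^n$, $a\in A^{-1}$), hence a finitely generated $H^0(A)$-module.

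Granting this, the rest is formal. Given $M\in\per(A)$ with $H^{>n}(M)=0$, one has $\sigma^{\geq n}M\cong\Sigma^{-n}H^n(M)\in\cd_{fg}(A)\subseteq\per(A)$ by hypothesis, so the triangle $\sigma^{\leq n-1}M\to M\to\sigma^{\geq n}M\to$ gives $\sigma^{\leq n-1}M\in\per(A)$; iterating down to degree $0$ yields $\sigma^{\leq 0}M\in\per(A)$, whence also $\sigma^{\geq 1}M\in\per(A)$. These are the truncation triangles for $(\cp^{\leq 0},\cp^{\geq 1})$, and with the inherited orthogonality this proves $(\cp^{\leq 0},\cp^{\geq 0})$ is a $t$-structure on $\per(A)$. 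If moreover $M\in\cd^{\geq 0}$, the intermediate truncations stay in $\cd^{\geq 0}$, so the same peeling exhibits every $H^p(M)$ as the top cohomology of a perfect complex, hence finitely generated over $H^0(A)$; thus $M\in\cd_{fg}(A)$, and with the trivial inclusion $\cd_{fg}^{\geq 0}\subseteq\cp^{\geq 0}$ this gives $\cp^{\geq 0}=\cd_{fg}^{\geq 0}$. Finally the heart of $(\cp^{\leq 0},\cp^{\geq 0})$ is $\cd^{\leq 0}\cap\cd^{\geq 0}\cap\per(A)$; an object there equals its own $H^0$, which is finitely generated by the same argument, so it lies in $\cd_{fg}(A)$ and this heart coincides with that of $(\cd_{fg}^{\leq 0},\cd_{fg}^{\geq 0})$. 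The one genuinely non-formal step — and the only place non-positivity enters beyond part (a) — is the finiteness lemma: producing a finite semi-free model of a perfect dg module concentrated in the degrees carrying its cohomology. This is a minimal-model / simplification-of-cofibrant-objects argument over a non-positive dg algebra, routine in spirit but the point requiring care.
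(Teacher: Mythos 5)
Your overall route is exactly the one the paper intends: restrict the standard $t$-structure, adapt the arguments of \cite[Proposition 2.1]{KalckYang16} with ``finite-dimensional over $k$'' replaced by ``finitely generated over $H^0(A)$'', and quote \cite[Corollary 2.4]{KalckYang16} for $\cp^{\leq 0}=\cp_{\leq 0}$. Part (a) is fine, and so is the formal ``peeling'' reduction in part (b), which correctly isolates the one real input: for $M\in\per(A)$ with top nonvanishing cohomology in degree $n$, the module $H^n(M)$ is finitely generated over $H^0(A)$. The genuine gap is in your proof of this lemma. First, a perfect dg $A$-module is in general only a direct summand in $\cd(A)$ of a finite semi-free dg module, not quasi-isomorphic to one (already for $A$ an ordinary noetherian ring, a finitely generated projective module that is not stably free, e.g.\ a nonprincipal ideal of a Dedekind domain, is perfect but admits no finite free resolution); and adding a complementary summand destroys the control on the top cohomological degree, so ``represent $M$ by a finite semi-free model'' is not available. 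Second, even for an honestly finite semi-free $P$ with $H^{>n}(P)=0$, the step ``successively cancelling top-degree generators'' so that all generators end up in degrees $\leq n$ is precisely where the difficulty sits: it is a stable-freeness/simplification statement, not an automatic consequence of surjectivity of $d$ in the top degree, and you assert it without argument. Your computation that $P^n$ is a finitely generated free $A^0$-module uses exactly this unproved reduction: if a generator of degree $d_j>n$ survives, $P^n$ acquires a summand isomorphic to $A^{n-d_j}$, which need not be finitely generated over $A^0$.

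The lemma is true, and the clean repair uses the ingredient you already cite. After shifting, $\Sigma^{n}M$ has cohomology concentrated in degrees $\leq 0$ with $H^0(\Sigma^n M)=H^n(M)$, so by \cite[Corollary 2.4]{KalckYang16} it lies in $\cp_{\leq 0}$, the smallest full subcategory of $\per(A)$ containing $A$ and closed under $\Sigma$, extensions and direct summands. The generators $\Sigma^i A$, $i\geq 0$, satisfy $H^0(\Sigma^i A)=H^i(A)$, which is $H^0(A)$ for $i=0$ and $0$ for $i>0$ by non-positivity; for a triangle $X\to Y\to Z\to\Sigma X$ the exact sequence $H^0(X)\to H^0(Y)\to H^0(Z)$ together with right noetherianness of $H^0(A)$ (submodules of finitely generated modules are finitely generated) shows that the class of objects with finitely generated $H^0$ is closed under extensions, and it is clearly closed under direct summands. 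Hence $H^n(M)$ is finitely generated over $H^0(A)$, and the rest of your argument goes through unchanged; this is also the spirit of the adaptation of \cite[Proposition 2.1]{KalckYang16} that the paper invokes. If you prefer your model-theoretic argument, you must both add a summand to reach a finite semi-free module and then justify the degree reduction (e.g.\ by splitting off contractible summands after stabilising), which is exactly the ``point requiring care'' you flagged but did not supply.
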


The following result is a generalisation of  \cite[Propositions 2.5]{KalckYang16}. The proof in \emph{loc.cit.} can be adapted.

 \begin{proposition}\label{prop:hom-finiteness}
 Let $A$ be a non-positive dg $k$-algebra. Assume that $H^0(A)$ is right noetherian as a ring and that $\cd_{fg}(A)\subseteq\per(A)$.
Then $H^p(A)$ is finitely generated over $H^0(A)$ for any $p\in\mathbb{Z}$.
\end{proposition}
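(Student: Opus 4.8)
The plan is to exploit the bounded $t$-structure from Proposition~\ref{prop:standard-t-str}(a) together with the inclusion $\cd_{fg}(A)\subseteq\per(A)$, which allows us to compare the $t$-structure on $\cd_{fg}(A)$ with the $t$-structure $(\cp^{\leq 0},\cp^{\geq 0})$ on $\per(A)$ from part (b). First I would recall that $A$ itself, viewed as a dg module over itself, lies in $\per(A)$ and in $\cd^{\leq 0}$ (since $A$ is non-positive, its cohomology sits in non-positive degrees), hence $A\in\cp^{\leq 0}=\cp_{\leq 0}$. The goal is to show each $H^p(A)$ is finitely generated over $H^0(A)$; since $A$ is non-positive, the only interesting case is $p\leq 0$, and $H^0(A)$ is finitely generated over itself, so the task is really about $p<0$.

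The key step is a downward induction on $p$ using the standard truncation triangles. For $A$ itself, the truncation triangle $\sigma^{\leq -1}A\to A\to H^0(A)\to\Sigma\sigma^{\leq -1}A$ realized inside $\per(A)$ (using that $A\in\cp_{\leq 0}$ and the $t$-structure of part (b)) shows that $\sigma^{\leq -1}A$, being the cone of a morphism in $\per(A)$, again lies in $\per(A)$, and clearly in $\cd^{\leq -1}$. Then $H^{-1}(\sigma^{\leq -1}A)=H^{-1}(A)$ is the degree-$(-1)$ cohomology, i.e.\ the $0$-th cohomology of $\Sigma^{-1}\sigma^{\leq -1}A$; applying the equivalence $H^0\colon$ heart $\to\mod H^0(A)$ from Proposition~\ref{prop:standard-t-str}(a) to the object $\tau^{\geq -1}\sigma^{\leq-1}A$ (the heart component) — once we know that object lies in $\cd_{fg}(A)$ — gives finite generation of $H^{-1}(A)$. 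Iterating, each truncation $\sigma^{\leq p}A$ stays in $\per(A)$ and its top cohomology $H^p(A)$ is extracted as an object of $\mod H^0(A)$ via the heart equivalence. The engine that makes this work is that $\per(A)\cap\cd^{\leq p}$ is closed under the truncation, which follows from $\cp^{\leq 0}=\cp_{\leq 0}$, i.e.\ the $t$-structure of part (b) restricts the standard truncations to $\per(A)$.

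I expect the main obstacle to be the bookkeeping that identifies $H^p(A)$, for each fixed $p$, as a \emph{finitely generated} $H^0(A)$-module rather than merely an $H^0(A)$-module. Concretely, one must argue that the relevant truncated objects lie in $\cd_{fg}(A)$ so that Proposition~\ref{prop:standard-t-str}(a)'s equivalence from the heart to $\mod H^0(A)$ (rather than $\Mod H^0(A)$) applies; this is where one genuinely uses $\cd_{fg}(A)\subseteq\per(A)$ to conclude that a perfect complex whose cohomology is bounded and whose pieces have been shown finitely generated actually sits in $\cd_{fg}(A)$, and then invokes boundedness of the $t$-structure, $\cd_{fg}(A)=\thick(\mod H^0(A))$, to close the induction. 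Since the statement asserts only adaptability of the proof of \cite[Proposition 2.5]{KalckYang16}, the remaining details — checking compatibility of truncations and finiteness of the heart pieces — are routine and I would refer to \emph{loc.cit.}
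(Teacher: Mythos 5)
Your overall skeleton --- downward induction via the standard truncations of $A$, keeping $\sigma^{\leq p}A$ inside $\per(A)$ and reading off $H^p(A)$ from a heart object --- is the right shape for the adaptation of \cite[Proposition 2.5]{KalckYang16} that the paper has in mind. But the step that actually produces \emph{finite generation} of $H^p(A)$ is never supplied, and your sketch of how to close it is circular: you propose to place the truncated objects in $\cd_{fg}(A)$ by arguing that ``a perfect complex whose cohomology is bounded and whose pieces have been shown finitely generated sits in $\cd_{fg}(A)$'', but at stage $p$ the piece $H^p(A)$ is exactly the one not yet shown to be finitely generated, and the object you need in $\cd_{fg}(A)$ (the heart component of $\sigma^{\leq p}A$, whose only cohomology is $H^p(A)$) lies in $\cd_{fg}(A)$ if and only if $H^p(A)$ is finitely generated. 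Note also that the hypothesis $\cd_{fg}(A)\subseteq\per(A)$ points the wrong way for this purpose: knowing an object is perfect does not by itself place it in $\cd_{fg}(A)$.

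The missing ingredient is precisely the finiteness content of Proposition~\ref{prop:standard-t-str}(b), which you never invoke: the equality $\cp^{\geq 0}=\cd_{fg}^{\geq 0}$, equivalently the statement that the heart of $(\cp^{\leq 0},\cp^{\geq 0})$ coincides with the heart of the $t$-structure on $\cd_{fg}(A)$, i.e.\ with $\mod H^0(A)$ (or, in the spirit of \cite{KalckYang16}, the equality $\cp^{\leq 0}=\cp_{\leq 0}$ combined with a long-exact-sequence argument over the noetherian ring $H^0(A)$ showing that $H^0$ of any object of $\cp_{\leq 0}$ is finitely generated). Also, the fact that the standard truncations of a perfect object stay perfect does not follow from ``$\cp^{\leq 0}=\cp_{\leq 0}$'' as you assert; it follows from (b) because $(\cd^{\leq 0}\cap\per(A),\cd^{\geq 0}\cap\per(A))$ is a $t$-structure on $\per(A)$ and truncation triangles are unique (or, inductively, from $H^0(A)\in\mod H^0(A)\subseteq\cd_{fg}(A)\subseteq\per(A)$). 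With both points in place the proof closes immediately: $\sigma^{\geq p}A\in\cd^{\geq p}\cap\per(A)=\Sigma^{-p}\cp^{\geq 0}=\Sigma^{-p}\cd_{fg}^{\geq 0}\subseteq\cd_{fg}(A)$, hence $H^p(A)=H^p(\sigma^{\geq p}A)$ is finitely generated over $H^0(A)$. Without some form of this step your induction does not get started at $p=-1$.
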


\section{Universal localisation of graded hereditary algebras}\label{s:universal-localisation-of-graded-algebras}

In this section we recall the definition of recollement of triangualted categories and show that universal localisation of graded hereditary algebras induces localisation of derived categories of dg modules.

\subsection{Recollement}\label{ss:recollement}

A \emph{recollement} of triangulated $k$-categories in the sense of \cite{BeilinsonBernsteinDeligne82} is a diagram of $k$-linear triangle functors
\begin{align*}
\begin{xy}
\SelectTips{cm}{}
\xymatrix{\ct'\ar[rr]|{\, \, i_*=i_! \, \,}&&\ct\ar[rr]|{\, \,  j^!=j^* \, \, }\ar@/^15pt/[ll]|{\, \, i^! \, \,}\ar@/_15pt/[ll]|{\, \, i^*\, \, }&&\ct''\ar@/^15pt/[ll]|{\, \, j_* \, \,}\ar@/_15pt/[ll]|{\, \, j_! \, \,}}
\end{xy}
\end{align*}
such that 
\begin{enumerate}
\item $(i^*,i_*)$,\,$(i_!,i^!)$,\,$(j_!,j^!)$ ,\,$(j^*,j_*)$
are adjoint pairs;

\item  $i_*,\,j_*,\,j_!$  are full embeddings;

\item  $i^!\circ j_*=0$ (and thus also $j^!\circ i_!=0$ and
$i^*\circ j_!=0$);

\item  for each $X\in \ct$ there are triangles
\[
\xymatrix@R=0.4pc{
i_! i^!(X)\ar[r]& X\ar[r]& j_* j^* (X)\ar[r]& \Sigma i_!i^!(X)\\
j_! j^! (X)\ar[r]& X\ar[r]& i_* i^*(X)\ar[r]& \Sigma j_!j^!(X).
}
\]
\end{enumerate}

We will need the following facts:
\begin{itemize}
\item[(a)] Let $f\colon A\to B$ be a homomorphism of dg $k$-algebras. Then there is an adjoint triple of triangle functors
\begin{align*}
\begin{xy}
\SelectTips{cm}{}
\xymatrix{\cd(B)\ar[rr]|{\, \, f^* \, \,}&&\cd(A)\ar@/^15pt/[ll]|{\, \, f_\star \, \,}\ar@/_15pt/[ll]|{\, \, f_*\, \, }}
\end{xy},
\end{align*}
where $f^*=?\lten_B B=\RHom_B(B,?)$, $f_*=?\lten_A  B$ and $f_\star=\RHom_A(B,?)$, with $B$ considered a left dg $A$-module by $a\cdot b=f(a)b$ and a right dg $A$-module by $b\cdot a=bf(a)$. If $f$ is a \emph{homological epimorphism} of dg algebras \cite[Section 4]{NicolasSaorin09}, that is, if $f^*$ is fully faithful, then this diagram is the left half of a recollement, see \cite[Section 4]{NicolasSaorin09}.
\item[(b)] Let $A$ be a dg $k$-algebra and $M\in\per(A)$. Then there is an adjoint triple of triangle functors
\begin{align*}
\begin{xy}
\SelectTips{cm}{}
\xymatrix{
\cd(A)\ar[rr]&&\Loc(M)\ar@/^15pt/[ll]\ar@/_15pt/[ll]\\
\mbox{}
}
\end{xy},
\end{align*}
where the top functor is the inclusion. This diagram is the right half of a recollement, see for example \cite[Propositions 4.4.14 and 4.4.16]{Nicolas07t}.
\end{itemize}

\subsection{Universal localisation of graded hereditary algebras}
Let $A$ be a graded $k$-algebra. Let $\Grmod A$ denote the category of graded $A$-modules. A complex of graded $A$-modules can be considered as a bicomplex and taking total complexes defines a triangle functor $\Tot\colon \cd(\Grmod A)\to \cd(A)$, see \cite[Section 4]{KalckYang16a}.

Let $f\colon P^{-1}\to P^{0}$ be a morphism in $\grproj A$, the category of finitely generated graded projective $A$-modules

\begin{theorem}\label{thm:localisation-of-graded-alg}
There is a graded $k$-algebra $A_f$ together with a graded-algebra homomorphism $\varphi\colon A\to A_f$ such that $f\ten_A A_f$ is an isomorphism in $\grproj A_f$ and that $\varphi$ is universal with respect to this property. Moreover, if $A$ is right graded hereditary, then $A_f$ is also right graded hereditary.

Assume that $A$ is right graded hereditary. Then $\varphi$ induces a triangle equivalence up to direct summands
\[
\per(A)/\thick(\con(f))\longrightarrow \per(A_f).
\]
In other words, $\varphi$ induces a triangle equivalence
\begin{align*}
\left(\per(A)/\thick(\con(f))\right)^\omega{\longrightarrow}\per(A_f), 
\end{align*}
where $(-)^\omega$ denotes the idempotent completion (see \cite{BalmerSchlichting01}).
\end{theorem}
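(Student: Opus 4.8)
The plan is to construct $A_f$ explicitly via generators and relations, verify the universal property directly, establish preservation of graded hereditariness by a dimension-shift argument, and then obtain the triangle equivalence by combining the recollement machinery from Section~\ref{ss:recollement} with Theorem~\ref{thm:localisation-of-graded-alg}'s own first part applied at the level of dg modules.

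**Construction of $A_f$ and the universal property.** First I would write $P^{-1}$ and $P^{0}$ as direct sums of shifted copies of $A$, so that $f$ is given by a matrix over $A$ with homogeneous entries. Following Schofield's construction of universal localisation (adapted to the graded setting, \confer the treatment in the ungraded case), $A_f$ is the graded $k$-algebra obtained from $A$ by freely adjoining the entries of a would-be inverse matrix $g\colon P^{0}\to P^{-1}$ subject to the relations $gf=\id_{P^{-1}}$ and $fg=\id_{P^{0}}$, with degrees of the new generators forced by homogeneity. The homomorphism $\varphi\colon A\to A_f$ is the obvious one. The universal property is then formal: any graded-algebra homomorphism $A\to C$ inverting $f\ten_A C$ produces a choice of inverse matrix, hence a homomorphism $A_f\to C$ factoring $\varphi$, and uniqueness follows because $A_f$ is generated over $A$ by the entries of $g$, which must map to the entries of the (unique) inverse. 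This step is essentially bookkeeping.

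**Preservation of graded hereditariness.** Here I would use that $A$ is right graded hereditary iff $\gldim$ of $\Grmod A$ is at most $1$, equivalently every graded submodule of a graded projective is graded projective. The cleanest route is via the recollement/localisation statement one is about to prove, so there is a mild circularity to avoid; instead I would argue directly. The key point: $A_f$ is a \emph{homological epimorphism} target in the graded sense, and for such a ring epimorphism from a hereditary ring the target is again hereditary because $\Tor^{A}_{\geq 1}(A_f, A_f)$ controls the discrepancy and vanishes. Concretely, $\cEnd$-style computations show $A_f\lten_A A_f \cong A_f$, and then a short exact sequence argument (resolve a graded $A_f$-module by graded projectives over $A$, apply $?\ten_A A_f$, use that $\varphi$ is a homological epimorphism) gives projective dimension at most $1$ over $A_f$.

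**The triangle equivalence.** This is where I expect the real work. The composite $A\to A_f$ induces, by fact~(a) of Section~\ref{ss:recollement}, an adjoint triple on derived categories of dg modules; the task is to show $\varphi^*\colon \cd(A_f)\to\cd(A)$ is fully faithful, i.e. $\varphi$ is a homological epimorphism of (ungraded, via $\Tot$) dg algebras. I would prove this by transferring the graded statement: since $A_f$ is the universal localisation, $A_f\lten_A A_f\to A_f$ is an isomorphism already in $\cd(\Grmod A)$ — this is essentially Schofield's theorem that universal localisation is a homological (even stably flat in favourable cases) epimorphism, and in the hereditary case stable flatness is automatic because the obstruction lives in $\Tor^{A}_{2}$ which vanishes. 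Applying $\Tot$ and using that $\Tot$ is a triangle functor compatible with tensor products transports this to $\cd(A)$. Then fact~(a) gives the left half of a recollement with $\ct' = \Ker(\varphi^*)^{\perp}$-side, and one identifies the kernel of the localisation functor $\cd(A)\to\cd(A_f)$ with $\Loc_{\cd(A)}(\con(f))$: indeed $\con(f)\ten_A A_f \simeq 0$ since $f$ becomes invertible, and conversely any object killed by $?\lten_A A_f$ lies in the localising subcategory generated by $\con(f)$ because over the hereditary algebra $A$ the complex $\con(f)$ is a two-term complex of graded projectives and these generate the requisite kernel. Passing to compact objects — using Lemma~\ref{lem:derived-equivalence-restricts-to-per-and-dfd}-style restriction of equivalences to $\per$, together with Thomason's localisation theorem which accounts for the ``up to direct summands'' / idempotent completion — yields the triangle equivalence $(\per(A)/\thick(\con(f)))^\omega\to\per(A_f)$.

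**Main obstacle.** The delicate point is the identification of $\Ker(?\lten_A A_f)$ with $\Loc(\con(f))$ and the control of the idempotent-completion discrepancy. The first requires knowing that $A_f$ is not just a homological epimorphism but that the localisation $\cd(A)\to\cd(A_f)$ is precisely the Verdier quotient by $\Loc(\con(f))$ — this uses graded hereditariness essentially, to guarantee that $\con(f)$, being a two-term perfect complex, generates everything that dies. The second, the appearance of $(-)^\omega$, is unavoidable: the Verdier quotient of triangulated categories with split idempotents need not have split idempotents, so one only gets an equivalence after idempotent completion, exactly as in Thomason--Neeman--Keller localisation theory; this is why the statement is phrased ``up to direct summands.''
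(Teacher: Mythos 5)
Your overall architecture overlaps substantially with the paper's: both routes rest on the recollement facts of Section~\ref{ss:recollement}, a transfer from the graded-module world to $\cd(A)$ via $\Tot$, and the Neeman--Thomason localisation theorem to pass to compact objects and account for the idempotent completion. The difference is where the real content is placed, and that is exactly where your proposal has a genuine gap. You build the bottom-level localisation ``by hand'': first $\varphi$ is a homological epimorphism of dg algebras (fine in principle, via $\Tor_1$-vanishing for universal localisation plus graded hereditariness killing higher $\Tor$, transported by $\Tot$), and then you must identify $\Ker(?\lten_A A_f)\subseteq\cd(A)$ with $\Loc_{\cd(A)}(\con(f))$. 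For this converse inclusion you offer only the phrase that $\con(f)$ ``is a two-term complex of graded projectives and these generate the requisite kernel.'' That is an assertion, not an argument, and it is precisely the nontrivial point. Note two concrete difficulties: (i) objects of $\cd(A)$ are dg modules over the graded algebra $A$ (with zero differential), and these do \emph{not} split into their cohomologies the way complexes over the hereditary abelian category $\Grmod A$ do, so the ``hereditary, hence everything decomposes'' heuristic is not available in $\cd(A)$ without a separate formality-type argument; (ii) even after reducing to a single graded module $M$ with $M\lten_A A_f=0$ and $\Hom_{\cd}(\con(f),\Sigma^p M)=0$ for all $p$, concluding $M=0$ requires identifying the perpendicular category of $\con(f)$ with (graded) $A_f$-modules --- a Geigle--Lenzing/Schofield-type theorem for universal localisations of hereditary rings, which you neither prove nor cite.

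The paper avoids exactly this by importing the whole recollement at the level of $\cd(\Grmod A)$, namely a graded version of \cite[Corollary 3.8]{ChenXi12} (third term $\Loc(\con(f)\langle p\rangle\mid p\in\mathbb{Z})$), and then verifying the four recollement axioms for the bottom row by transporting along $\Tot$, using its density on both $\cd(\Grmod A)$ and $\cd(\Grmod A_f)$ (\cite[Theorem 5.1(e)]{KalckYang16a}); in particular full faithfulness of $\varphi^*\colon\cd(A_f)\to\cd(A)$, the orthogonality condition (3), and the gluing triangles (4) are all obtained from the graded recollement plus density, not re-proved at the dg level. To repair your proof you should either run the same transfer (prove or cite the graded recollement, then use density of $\Tot$ for all four axioms, not just for $A_f\lten_AA_f\simeq A_f$), or supply the missing kernel identification directly, e.g.\ by proving the perpendicular-category statement for graded hereditary $A$ and then addressing point (i) above. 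The remaining ingredients of your proposal (construction of $A_f$, universal property, preservation of graded hereditariness via stable flatness, and the final passage to compacts) are acceptable sketches and match what the paper delegates to \cite[Proposition 3.1]{ChenYang15} and \cite[Theorem 2.1]{Neeman92a}.
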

\begin{proof}
The first statement follows from \cite[Proposition 3.1]{ChenYang15}. 

Assume that $A$ is right graded hereditary. Then there is a commutative diagram
\begin{align*}
\begin{xy}
\SelectTips{cm}{}
\xymatrix@R=3pc@C=2pc{\cd(\Grmod A_f)\ar[d]^{\Tot}\ar[rr]|{\, \, \varphi^* \, \,}&&\cd(\Grmod A)\ar[d]^{\Tot}\ar[rr]\ar@/^15pt/[ll]|{\, \, \varphi_\star \, \,}\ar@/_15pt/[ll]|{\, \, \varphi_*\, \, }&&\Loc(\con(f)\langle p\rangle|p\in\mathbb{Z})\ar[d]^{\Tot}\ar@/^15pt/[ll]\ar@/_15pt/[ll]\\
\cd(A_f)\ar[rr]|{\, \, \varphi^*\, \,}&&\cd(A)\ar[rr]|{\, \, j^*\,\,}\ar@/^15pt/[ll]|{\, \, \varphi_\star \, \,}\ar@/_15pt/[ll]|{\, \, \varphi_* \, \, }&&\Loc(\con(f)),\ar@/^15pt/[ll]\ar@/_15pt/[ll]}.
\end{xy}
\end{align*}
where the right upper functor in each row is the inclusion.
By a graded-algebra version of \cite[Corollary 3.8]{ChenXi12}, the top row is a recollement. We claim that the bottom row is also a recollement. Then the desired equivalence follows from \cite[Theorem 2.1]{Neeman92a}. Now we verify the four conditions in the definition of a recollement.  By \cite[Theorem 5.1(e)]{KalckYang16a}, both $\Tot\colon \cd(\Grmod A_f)\to\cd(A_f)$ and $\Tot\colon \cd(\Grmod A)\to \cd(A)$ are dense.

(1) and (2) follow from Section~\ref{ss:recollement} (a) and (b), except the condition that $\varphi^*\colon \cd(A_f)\to\cd(A)$ is fully faithful, which follows from \cite[Theorem 5.1(e)]{KalckYang16a} since $\varphi^*\colon \cd(\Grmod A_f)\to\cd(\Grmod A)$ is fully faithful.

(3) By the commutativity of the diagram and the fact that the top row is a recollement, we have $j^*\circ\varphi^*\circ\Tot=0$, which implies that $j^*\circ\varphi^*=0$, since $\Tot\colon \cd(\Grmod A_f)\to\cd(A_f)$ is dense.

(4) For $M\in\cd(A)$, there is $\tilde{M}\in\cd(\Grmod A)$ such that $\Tot(\tilde{M})\cong M$. We form the two triangles associated to $\tilde{M}$ in the top row and apply $\Tot$ to obtain the desired triangles associated to $M$ in the bottom row.
\end{proof}

\section{Singularity categories}
In this section we collect some results on the idempotent completeness and Hom-finiteness of singularity categories and show that singularity categories naturally appear  in the context of algebraic triangulated categories.

\smallskip

Let $R$ be a $k$-algebra which is right noetherian as a ring. Following \cite{Buchweitz87,Orlov04}, we define the \emph{singularity category} of $R$  as
\[
\cd_{sg}(R):=\cd^b(\mod R)/\ch^b(\proj R).
\]
By \cite[Theorem 3.8]{Beligiannis00} (or the dual of \cite[Exemple 2.3]{KellerVossieck87}), $\cd_{sg}(R)$ is triangle equivalent to the stabilisation of the additive quotient of $\mod R$ by $\proj R$. In general $\cd_{sg}(R)$ is neither idempotent complete nor Hom-finite.

\subsection{Relation to algebraic triangulated categories}

Let $\ce$ be a Frobenius $k$-category and denote by $\proj \ce\subseteq \ce$ the full subcategory of projective objects. We refer to Keller's overview article for definitions and unexplained terminology \cite{Keller96}.Denote by $\ul{\ce}$ the stable category of $\ce$, which has the same objects as $\ce$ and morphism spaces are defined as $\ul{\Hom}_{\ce}(X, Y)=\Hom_{\ce}(X, Y)/\cp(X, Y)$. Here $\cp(X, Y)$ consists of morphisms factoring through $\proj \ce$.  

\smallskip
Recall that a noetherian ring $R$ is called an \emph{Iwanaga--Gorenstein ring} if ${\rm inj.dim}_RR<\infty$ and ${\rm inj.dim}R_R<\infty$. Let
\[\MCM(R):=\{X\in\mod R\mid\Ext^i_R(X,R)=0\ \forall i>0\}\]
be the category of \emph{maximal Cohen--Macaulay $R$-modules}, which is a Frobenius $k$-category with $\proj \MCM(R)=\proj R$. 

\begin{theorem}[{\cite[Theorem 4.4.1(2)]{Buchweitz87}}]\label{thm:Buchweitz'-theorem}
Let $R$ be an Iwanaga--Gorenstein ring. Then there is a natural triangle equivalence
\[
\underline{\MCM}(R)\stackrel{\simeq}{\longrightarrow} \cd_{sg}(R).
\]
\end{theorem}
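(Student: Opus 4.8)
The plan is to realise $\cd_{sg}(R)$ through the Frobenius exact structure carried by $\MCM(R)$, following Buchweitz; set $d:=\max\{{\rm inj.dim}\,{}_RR,\,{\rm inj.dim}\,R_R\}<\infty$. First recall that $\MCM(R)$, with conflations the short exact sequences whose three terms all lie in $\MCM(R)$, is a Frobenius category with $\proj\MCM(R)=\proj R$: any $X\in\MCM(R)$ embeds into a finitely generated free module $P$ with cokernel again in $\MCM(R)$ (a dimension shift using $\Ext^{>0}_R(X,R)=0$), and $\Ext^1_R(-,R)$ vanishes on $\MCM(R)$, so $\proj R$ is injective --- and dually projective --- in $\MCM(R)$; crucially, ${\rm inj.dim}\,R<\infty$ ensures that such an embedding iterates into an exact coresolution $0\to X\to P^0\to P^1\to\cdots$ with all cocycles in $\MCM(R)$, so that the cosyzygy $\Sigma X:=\cok(X\hookrightarrow P)$ is well defined. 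Hence $\underline{\MCM}(R)$ is triangulated, with suspension $\Sigma$ and distinguished triangles induced from conflations.

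Next I would construct the comparison functor: the composite $\MCM(R)\hookrightarrow\mod R\hookrightarrow\cd^b(\mod R)\to\cd_{sg}(R)$ sends $\proj R$ to $0$, hence factors through a functor $F\colon\underline{\MCM}(R)\to\cd_{sg}(R)$. This $F$ is a triangle functor: a chosen conflation $0\to X\to P\to\Sigma X\to 0$ with $P\in\proj R$ gives a triangle in $\cd^b(\mod R)$ whose middle term becomes zero in $\cd_{sg}(R)$, producing a natural isomorphism that intertwines the suspension $\Sigma$ of $\underline{\MCM}(R)$ with that of $\cd_{sg}(R)$; applying the same observation to an arbitrary conflation shows that $F$ takes the triangles of $\underline{\MCM}(R)$ to triangles of $\cd_{sg}(R)$, the compatibility of connecting morphisms being a routine diagram chase.

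For density I would use high syzygies: for any $M\in\mod R$ one has $\Omega^dM\in\MCM(R)$ (a dimension shift, now using $\Ext^{>d}_R(M,R)=0$), and the short exact sequences defining the syzygies, whose middle terms are projective, yield $M\cong(\Omega^dM)[d]$ in $\cd_{sg}(R)$; thus every module is, in $\cd_{sg}(R)$, a shift of an object of the image of $F$. Since $\cd^b(\mod R)=\tria_{\cd^b(\mod R)}(\mod R)$ and hence $\cd_{sg}(R)=\tria(\mod R)$, while $F$ is a triangle functor commuting with suspension, once $F$ is known to be full its essential image is a triangulated subcategory of $\cd_{sg}(R)$ containing every module, so it is all of $\cd_{sg}(R)$.

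The remaining point, and the substantial one, is full faithfulness, which by d\'evissage reduces to showing that for $X,Y\in\MCM(R)$ the canonical map $\underline{\Hom}_{\MCM(R)}(X,Y)\to\Hom_{\cd_{sg}(R)}(X,Y)$ is bijective. I would organise this through complete (totally acyclic) resolutions: splicing a projective resolution of $X$ with the coresolution from the first step produces an acyclic complex of projectives $P_X^\bullet$ with $Z^0(P_X^\bullet)\cong X$ on which $\Hom_R(-,R)$ stays acyclic, and morphisms in $\cd_{sg}(R)$ between maximal Cohen--Macaulay modules can then be read off from such resolutions. Concretely one shows that every roof $X\xleftarrow{s}W\to Y$ with $\con(s)\in\per(R)$ can be straightened to an honest homomorphism $X\to Y$, so that $\Hom_R(X,Y)\twoheadrightarrow\Hom_{\cd_{sg}(R)}(X,Y)$, and that a homomorphism $X\to Y$ vanishes in $\cd_{sg}(R)$ precisely when it factors through a projective; both reductions rest on replacing the perfect complexes that intervene in the relevant cones and factorisations by a single projective module, which is legitimate because $\Ext^{>0}_R(X,-)$ kills projectives while $X$ and $Y$ have finite injective dimension. (Alternatively, one may feed this into the description of $\cd_{sg}(R)$ as the stabilisation of $\mod R/\proj R$ recalled above: because $\Omega$ restricts to an autoequivalence of $\underline{\MCM}(R)$ with quasi-inverse $\Sigma$, the colimit computing $\Hom_{\cd_{sg}(R)}(X,Y)$ stabilises already at its initial term.) \emph{This step is the main obstacle}: the homological bookkeeping that turns perfect complexes into projective modules and identifies the resulting $\Hom$-groups with stable ones --- together with the naturality needed to promote the bijection to a natural triangle equivalence --- is exactly where the Gorenstein hypothesis is indispensable and must be carried out with care.
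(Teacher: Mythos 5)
Your overall strategy is the classical one (and essentially Buchweitz's original route): equip $\MCM(R)$ with its Frobenius structure, stabilise, map into $\cd_{sg}(R)$, get density from $\Omega^d$, and prove full faithfulness by comparing stable Homs with Homs in the Verdier quotient. Note that the paper itself does not reprove this statement — it cites Buchweitz — and where it needs such a comparison in greater generality (Proposition~\ref{prop:Buchweitz}) it argues differently: via Lemma~\ref{lem:StFrob}, i.e.\ the Keller--Vossieck-type equivalence $\ul{\ce}\simeq \ch^{-,b}(\proj \ce)/\ch^{b}(\proj \ce)$, which encodes the whole Hom-comparison once and for all at the level of homotopy categories of complexes of projectives, so that full faithfulness never has to be checked by manipulating roofs in $\cd_{sg}(R)$; for Gorenstein $R$ density is then exactly your syzygy argument.

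The genuine gap is that the one step carrying the content of the theorem is not actually proved. You assert that every roof $X\leftarrow W\to Y$ with perfect cone can be straightened to a module homomorphism and that a homomorphism becomes zero in $\cd_{sg}(R)$ iff it factors through a projective, and then you explicitly defer the ``homological bookkeeping''. But this is precisely the nontrivial comparison $\underline{\Hom}_R(X,Y)\cong\Hom_{\cd_{sg}(R)}(X,Y)$ for $X,Y\in\MCM(R)$; without it you have an outline, not a proof, and your density and triangulated-image arguments are themselves conditional on fullness. To close it one needs an actual mechanism — e.g.\ the calculus of left fractions after observing that every object of $\cd_{sg}(R)$ is a shift of a module and every morphism of modules in $\cd_{sg}(R)$ is represented by a fraction $s^{-1}f$ with $\con(s)$ a bounded complex of projectives, combined with $\Ext^{>0}_R(X,P)=0$ for $P$ projective and $X\in\MCM(R)$ to kill the interfering perfect complexes; or, as in the paper's Proposition~\ref{prop:Buchweitz}, the passage through $\ch^{-,b}(\proj R)/\ch^{b}(\proj R)$. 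A smaller but real gloss of the same kind occurs earlier: the claim that every $X\in\MCM(R)$ embeds into a finitely generated projective with cokernel again in $\MCM(R)$ does not follow from a ``dimension shift using $\Ext^{>0}_R(X,R)=0$'' alone; one needs the duality argument (reflexivity of $X$ and $\Ext^{>0}_{R^{op}}(X^{*},R)=0$, or an equivalent cosyzygy construction using ${\rm inj.dim}\,R<\infty$) to produce the embedding and to see that the cokernel is maximal Cohen--Macaulay, and this is where ``Iwanaga--Gorenstein'' first genuinely enters.
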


This theorem admits the following generalisation. Let $\ce$ be an idempotent complete Frobenius $k$-category.

\begin{proposition}\label{prop:Buchweitz}
Assume that $\proj \ce= \add P$ for some $P \in \proj \ce$.  If moreover $R=\End_{\ce}(P)$ is right noetherian as a ring, then there is a fully faithful triangle functor
$$\ul{\widetilde{\P}}\colon  \ul{\ce} \longrightarrow  \cd_{sg}(R).$$
\end{proposition}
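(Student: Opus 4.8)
The plan is to mimic the proof of Buchweitz's theorem (Theorem~\ref{thm:Buchweitz'-theorem}), using the functor $\Hom_{\ce}(P,?)$ to transport the Frobenius structure on $\ce$ to a module category over $R=\End_{\ce}(P)$. First I would form the additive functor
\[
G=\Hom_{\ce}(P,?)\colon \ce\longrightarrow \Mod R,
\]
which is fully faithful on $\add P=\proj\ce$ and sends projectives of $\ce$ to finitely generated projective $R$-modules; since $\proj\ce=\add P$, its essential image on projectives is exactly $\proj R$. For an arbitrary $X\in\ce$, pick a conflation $P_1\to P_0\to X$ with $P_i\in\proj\ce$; applying $G$ gives an exact sequence $G(P_1)\to G(P_0)\to G(X)$, so $G(X)$ has a finite projective presentation over $R$ and hence lies in $\mod R$ (here one uses that $R$ is right noetherian to stay inside $\mod R$ under further resolution). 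Dually, each $X$ also sits in a conflation $X\to P^0\to P^1$, which after applying $G$ exhibits $G(X)$ as a second syzygy-type module; more precisely one checks that $G(X)\in\MCM(R)$ in the sense that $\Ext^i_R(G(X),R)=0$ for $i>0$, by using that conflations in $\ce$ stay exact after $\Hom_{\ce}(?,P)$ (as $P$ is projective–injective in $\ce$) together with the identification $\Hom_{\ce}(P,P)=R$.

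Next I would observe that $G$ carries conflations in $\ce$ to short exact sequences in $\mod R$ with terms in $\MCM(R)$: projectivity–injectivity of $P$ makes $\Hom_{\ce}(P,?)$ exact on conflations, and the image sequences are admissible (i.e.\ $\MCM(R)$-conflations) because the outer terms lie in $\MCM(R)$ and $\MCM(R)$ is closed under extensions. Thus $G$ induces an exact functor of Frobenius categories $\ce\to\MCM(R)$ taking $\proj\ce$ into $\proj R=\proj\MCM(R)$, hence a triangle functor on stable categories
\[
\ul{G}\colon \ul{\ce}\longrightarrow \ul{\MCM}(R)\stackrel{\simeq}{\longrightarrow}\cd_{sg}(R),
\]
the last equivalence being Buchweitz's theorem applied to the (necessarily Iwanaga--Gorenstein, or at least $\MCM$-supporting) ring $R$; in the generality here one should instead argue directly that $\ul{\MCM}(R)$ embeds into $\cd_{sg}(R)$ without assuming $R$ Gorenstein, which is where the tilde and the word ``fully faithful'' in the statement come from. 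Finally, to see $\ul{G}$ is fully faithful, I would compute $\ul{\Hom}_{\ce}(X,Y)$ and $\ul{\Hom}_{R}(G(X),G(Y))$ by dimension-shifting: replacing $X$ by a conflation $X\to P^0\to \Sigma X$, morphisms in the stable category of $\ce$ are governed by $\Hom_{\ce}$ modulo factorization through $\add P$, and $G$ being fully faithful on $\add P$ plus exact on the relevant conflations forces the comparison map on stable Hom-spaces to be bijective. The idempotent completeness hypothesis on $\ce$ is used to guarantee that the conflations and syzygies behave well and that $\ul{\ce}$ is the ``correct'' stable category to compare with.

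The main obstacle I expect is establishing that $\ul{G}$ is \emph{fully faithful} rather than merely a well-defined triangle functor: one must control the morphisms that become invertible or zero in the passage to the stable categories on both sides, and in particular verify that a morphism $G(X)\to G(Y)$ factoring through $\proj R$ lifts to a morphism $X\to Y$ factoring through $\proj\ce$. This requires the lifting property coming from $G$ being full on $\proj\ce=\add P$ together with the projective–injectivity of $P$ in $\ce$; assembling these into a clean dimension-shift argument is the technical heart of the proof. A secondary point to be careful about is staying inside $\mod R$ throughout, which is exactly what the right-noetherianity of $R$ buys us, and identifying the essential image of $\ul{G}$ precisely enough to obtain the fully faithful embedding into $\cd_{sg}(R)$ in this non-Gorenstein generality.
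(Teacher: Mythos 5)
Your opening moves are sound: $G=\Hom_{\ce}(P,?)$ lands in $\mod R$, is fully faithful on $\add P$, and the projectivization/dimension-shift argument you sketch does show that $\ul{\ce}\to\underline{\mod}\,R$ is fully faithful. The genuine gap is in the final comparison with $\cd_{sg}(R)$, which is exactly the point this proposition is designed to handle. You route the argument through $\MCM(R)$ and Buchweitz's theorem, but $R$ is not assumed (and, as the Remark following the proposition stresses, need not be) Iwanaga--Gorenstein. For a general right noetherian $R$, the subcategory of modules with $\Ext^{>0}_R(-,R)=0$ need not be a Frobenius category (there is no reason for enough injectives, i.e.\ for coresolutions by projectives to exist), so "an exact functor of Frobenius categories $\ce\to\MCM(R)$" is not available; and your fallback claim that $\ul{\MCM}(R)$ embeds into $\cd_{sg}(R)$ "without assuming $R$ Gorenstein" is not a theorem: for non-Gorenstein rings, one-sided $\Ext$-vanishing against $R$ does not guarantee that stable Hom computes Hom in the singularity category. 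What your strategy actually needs is that the image of $G$ consists of Gorenstein-projective (totally reflexive) modules -- this is true, and provable by applying $G$ to a complete $\proj\ce$-resolution of $X$ and using the injectivity of $P$ in $\ce$ to check acyclicity after $\Hom_R(-,R)$ -- followed by the general form of Buchweitz's embedding of the stable category of Gorenstein-projectives into $\cd_{sg}(R)$. You only verify the weaker vanishing $\Ext^{>0}_R(G(X),R)=0$, so as written the last step does not go through.

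For comparison, the paper's proof avoids module-theoretic considerations entirely: Lemma~\ref{lem:StFrob} identifies $\ul{\ce}$ with $\ch^{-,b}(\proj\ce)/\ch^b(\proj\ce)$, the additive equivalence $\Hom_{\ce}(P,?)\colon\proj\ce\to\proj R$ induces an equivalence $\ch^-(\proj\ce)\to\ch^-(\proj R)$ restricting to a fully faithful functor $\ch^{-,b}(\proj\ce)\to\ch^{-,b}(\proj R)$, and composing with $\ch^{-,b}(\proj R)/\ch^b(\proj R)\simeq\cd_{sg}(R)$ yields the embedding. The mismatch between the exact structures on $\proj\ce\subseteq\ce$ and $\proj R\subseteq\mod R$ -- the very phenomenon that derails an $\MCM$-based argument -- then only costs density, not full faithfulness, which is precisely the content of the Remark after the proposition.
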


We need some preparation. Taking projective resolutions yields a functor $\P\colon  \ce \ra \ch^{-,b}(\proj \ce) $, where for an exact category $\ca$ we denote by $\ch^{-, b}(\proj \ca)\subseteq \ch^-(\proj \ca)$ the full triangulated subcategory consisting of those complexes of projective objects $P^\bullet$, which are acyclic in small enough degrees (\ie there exists $N \in \Z$ such that there are conflations
$
\begin{xy}\SelectTips{cm}{}
\xymatrix{
Z^n(P^{\bullet}) \ar@{>->}[r]^(.65){i_{n}} & P^n \ar@{->>}[r]^(.35){p_{n}} & Z^{n+1}(P^\bullet) }
\end{xy}
$  in $\ca$ and such that $d^n_{P^\bullet}=i^{n+1}p^n$ for all $n<N$). The following lemma is dual to \cite[Exemple 2.3]{KellerVossieck87}.

\begin{lemma}\label{lem:StFrob}
The functor $\P$ induces an equivalence of triangulated categories $$\ul{\P}\colon  \ul{\ce} \longrightarrow \ch^{-, b}(\proj \ce)/\ch^b(\proj \ce).$$
\end{lemma}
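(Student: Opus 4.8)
The plan is to use the exact-category analogue of the standard fact that taking projective resolutions identifies the stable category of a Frobenius category with an appropriate Verdier quotient of the homotopy category of projectives. First I would recall that for a Frobenius exact $k$-category $\ce$, every object $X$ admits a (deleted) projective resolution $\cp(X)\in\ch^{-,b}(\proj\ce)$, well-defined up to homotopy, and that this assignment is functorial: a morphism $f\colon X\to Y$ lifts to a chain map $\cp(f)\colon \cp(X)\to\cp(Y)$, unique up to homotopy, giving an additive functor $\P\colon\ce\to\ch^{-,b}(\proj\ce)$. I would then check that $\P$ sends projective objects (= injective objects, since $\ce$ is Frobenius) to complexes concentrated in degree $0$, hence to objects of $\ch^b(\proj\ce)$, so that the composite $\ce\xrightarrow{\P}\ch^{-,b}(\proj\ce)\to\ch^{-,b}(\proj\ce)/\ch^b(\proj\ce)$ kills $\proj\ce$ and factors through $\ul\ce$, yielding $\ul\P$.

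Next I would verify that $\ul\P$ is a triangle functor. The triangulated structure on $\ul\ce$ comes from the Frobenius structure: the suspension $\Sigma X$ is the cokernel of an inflation $X\rightarrowtail I$ with $I$ injective, and triangles come from conflations. The key compatibility is that a conflation $X\rightarrowtail Y\twoheadrightarrow Z$ in $\ce$ gives rise, via the horseshoe lemma, to a degreewise-split short exact sequence of projective resolutions $\cp(X)\rightarrowtail\cp(Y)\twoheadrightarrow\cp(Z)$, whose associated triangle in $\ch^{-,b}(\proj\ce)$ maps to the standard triangle attached to the conflation; one also checks $\ul\P(\Sigma X)\cong\Sigma\ul\P(X)$ because the cone of $\cp(X)\to 0$ is $\cp(X)[1]$, which is a projective resolution of $\Sigma X$ up to the contractible complex coming from $I$. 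These are the routine (if slightly fiddly) sign-and-horseshoe verifications that I would only sketch.

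Then I would prove essential surjectivity and full faithfulness. For essential surjectivity, take $P^\bullet\in\ch^{-,b}(\proj\ce)$; by definition it is acyclic in degrees $<N$, and truncating appropriately one finds a conflation $Z^N(P^\bullet)\rightarrowtail P^N\twoheadrightarrow\cdots$ exhibiting $P^\bullet$, modulo a bounded complex, as a shift of the projective resolution of the cycle object $Z^N(P^\bullet)\in\ce$ (using that $\ce$, being Frobenius with enough projectives, has $\proj\ce$ contravariantly finite so syzygies exist). Hence $[P^\bullet]$ is in the image of $\ul\P$ up to shift, and since $\ul\P$ commutes with $\Sigma$ we are done. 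For full faithfulness I would use the adjunction/calculus of fractions description of the quotient: a morphism $\cp(X)\to\cp(Y)$ in the quotient is a roof $\cp(X)\leftarrow Q^\bullet\to\cp(Y)$ with the backward arrow having bounded cone; pushing $Q^\bullet$ down to its relevant cycle object and using uniqueness of projective resolutions up to homotopy, I would show every such roof is equivalent to an honest chain map $\cp(f)$, and that $\cp(f)$ is null in the quotient iff $f$ factors through $\proj\ce$, i.e. iff $f=0$ in $\ul\ce$.

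The main obstacle I anticipate is precisely this last point — identifying $\Hom$ in the Verdier quotient $\ch^{-,b}(\proj\ce)/\ch^b(\proj\ce)$ with $\ul\Hom_\ce$. In the module case ($\ce=\MCM(R)$ or $\ce=\Mod R$) one has strong tools (injectives are projectives, truncation functors, the bounded complexes form a nice localizing-type subcategory); in the general exact-category setting one must be careful that $\ch^b(\proj\ce)$ is a thick subcategory and that the calculus of left fractions applies, i.e. that bounded quasi-isomorphisms (in the exact sense) admit a suitable Ore condition. I would handle this by invoking the dual of \cite{KellerVossieck87}, Exemple~2.3, as the excerpt already indicates — the statement here is explicitly ``dual to'' that — so in practice the proof reduces to transporting their argument (done for $\ch^{+,b}$ and injective resolutions of a Frobenius category, giving $\ul\ce\simeq\ch^{+,b}(\inj\ce)/\ch^b(\inj\ce)$) along the duality $\ce\mapsto\ce\op$, and then checking that the duality interchanges $\inj=\proj$ and reverses the complexes as claimed. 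Thus the real content is bookkeeping rather than a new idea, and I would present it as such.
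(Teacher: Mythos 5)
Your proposal is correct and in substance coincides with the paper's treatment: the paper offers no independent proof of this lemma, remarking only that it is dual to \cite[Exemple 2.3]{KellerVossieck87}, which is exactly the reduction you invoke for the delicate identification of morphism spaces in the Verdier quotient. The extra details you sketch (functoriality of resolutions, the horseshoe lemma for exactness, brutal truncation for essential surjectivity) are the standard ingredients of that dual argument and raise no issues.
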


\begin{proof}[Proof of Proposition~\ref{prop:Buchweitz}]
It is known that there is an equivalence of additive $k$-categories $\Hom_{\ce}(P, ?)\colon  \proj \ce \rightarrow \proj R.$ This induces a triangle equivalence $\ch^-(\proj \ce) \ra \ch^-(\proj R)$. The fully faithful restriction $\ch^{-, b}(\proj \ce) \ra \ch^{-, b}(\proj R)$ is well defined since $P$ is projective. Combining this with the exact equivalence $\ch^{-, b}(\proj R) \ra \cd^b(\mod R)$ and Lemma \ref{lem:StFrob} we obtain a fully faithful functor
$$
\begin{xy}\SelectTips{cm}{}
\xymatrix
{
\ul{\ce} \ar[r]^(.3){\displaystyle{\ul{\P}}} & \displaystyle{\frac{\ch^{-, b}(\proj \ce)}{\ch^b(\proj \ce)}} \ar[r] & \displaystyle{\frac{\ch^{-, b}(\proj R)}{\ch^b(\proj R)}} \ar[r]^{\simeq} & \displaystyle{\frac{\cd^b(\mod R)}{\ch^b(\proj R)}}.
}
\end{xy}
$$
\end{proof}

\begin{remark}
We point out that the restriction of the equivalence $\ch^-(\proj \ce) \ra \ch^-(\proj R)$ to $\ch^{-, b}(\proj \ce) \ra \ch^{-, b}(\proj R)$ is \emph{not} dense in general. The reason is that the two ambient exact categories $\proj \ce \subseteq \ce$ and $\proj R \subseteq \mod R$ may differ. This leads to different notions of acyclicity. As a consequence, the functor $\widetilde{\mathbb{P}}$ in Proposition~\ref{prop:Buchweitz} is not dense in general. However, it is dense if $\ce$ admits a non-commutative resolution, see \cite[Theorem 2.7]{KalckIyamaWemyssYang15}. In fact in this case $R$ has to be Iwanaga--Gorenstein and $\Hom_\ce(P,?)$ restricts to an equivalence $\ce\simeq\MCM(R)$.
\end{remark}

\subsection{Idempotent completeness}
Let $S$ be a commutative  $k$-algebra which is local complete noetherian as a ring.

\begin{lemma}\label{L:stable-complete}
Let $R$ be an Iwanaga--Gorenstein $S$-algebra which is finitely generated as an $S$-module. Then $\cd_{sg}(R)$ is idempotent complete.
\end{lemma}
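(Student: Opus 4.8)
The plan is to reduce idempotent completeness of $\cd_{sg}(R)$ to a statement about the endomorphism rings of objects, exploiting the hypothesis that the base $S$ is local complete noetherian. The key general principle is that a Krull--Schmidt-type property follows once all relevant endomorphism rings are semiperfect, and that an additive category in which idempotents split and which has semiperfect endomorphism rings is idempotent complete by construction; so the real content is to show that for every object $X\in\cd_{sg}(R)$ the endomorphism ring $\End_{\cd_{sg}(R)}(X)$ is semiperfect, indeed that the category is Krull--Schmidt.

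First I would invoke Buchweitz's theorem (Theorem~\ref{thm:Buchweitz'-theorem}): since $R$ is Iwanaga--Gorenstein, $\cd_{sg}(R)\simeq\ul{\MCM}(R)$, so it suffices to prove that $\ul{\MCM}(R)$ is idempotent complete. An idempotent $e\in\End_{\ul{\MCM}(R)}(X)$ lifts, after replacing $X$ by $X\oplus P$ for a suitable projective $P$, to an idempotent in $\End_{\MCM(R)}(X\oplus P)$ (this is the standard argument that the stable category of a Frobenius category with split idempotents is again idempotent complete — the obstruction to lifting an idempotent from $\ul{\End}$ to $\End$ lies in a group that is killed after adding a projective summand, using that projectives over $R$ are injective). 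Hence it is enough to show that $\MCM(R)$ itself is idempotent complete, and since $\MCM(R)$ is a full subcategory of $\mod R$ closed under direct summands, it is enough to show that $\mod R$ is idempotent complete, i.e. that idempotents in $\End_R(M)$ split in $\mod R$ — which is automatic because $\mod R$ is an abelian category. The point where the hypotheses on $S$ enter is slightly different: one wants the \emph{stable} category to be idempotent complete as a triangulated category, and the cleanest route is via Krull--Schmidt. Since $R$ is module-finite over the complete local noetherian ring $S$, every $\End_R(M)$ for $M\in\mod R$ is a module-finite $S$-algebra, hence a semiperfect ring (idempotents lift modulo the Jacobson radical because $S$ is complete, so $\End_R(M)/\rad$ is semisimple artinian and $\End_R(M)$ is semiperfect); therefore $\mod R$, and with it $\MCM(R)$ and $\ul{\MCM}(R)$, is a Krull--Schmidt category. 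A Krull--Schmidt additive category is idempotent complete, so $\cd_{sg}(R)\simeq\ul{\MCM}(R)$ is idempotent complete.

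The main obstacle I expect is the passage between the three levels — $\mod R$, the Frobenius category $\MCM(R)$, and its stable category $\ul{\MCM}(R)$ — making sure idempotent splitting is genuinely transported: splitting in $\mod R$ is free, but one must check the summands of an MCM module are again MCM (true, as $\Ext^{>0}_R(-,R)$ is additive) and that a splitting of an idempotent in $\ul{\MCM}(R)$ can be realized inside $\MCM(R)$ up to projective summands. The semiperfectness/completeness input makes this smooth: in a Krull--Schmidt category every idempotent splits as a matter of the definition, so once Krull--Schmidt is established at the level of $\mod R$ (equivalently $\MCM(R)$), one only needs the elementary fact that the stable category of a Krull--Schmidt Frobenius category is Krull--Schmidt, hence idempotent complete. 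I would structure the write-up as: (i) reduce to $\ul{\MCM}(R)$ via Theorem~\ref{thm:Buchweitz'-theorem}; (ii) observe $\mod R$ is Krull--Schmidt using that $\End_R(M)$ is a module-finite algebra over the complete local ring $S$; (iii) deduce $\MCM(R)$ and then $\ul{\MCM}(R)$ are Krull--Schmidt; (iv) conclude idempotent completeness.
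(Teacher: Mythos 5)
Your final plan (i)--(iv) is correct, and at bottom it is the same proof as the paper's: both reduce to $\ul{\MCM}(R)$ via Theorem~\ref{thm:Buchweitz'-theorem}, and both make the hypotheses on $S$ enter through the fact that a module-finite algebra over a complete local noetherian ring is semiperfect, so that indecomposable finitely generated $R$-modules have local endomorphism rings. The paper argues directly with a single object: for $M\in\MCM(R)$ without projective summands it lifts an idempotent of $\ul{\End}_R(M)$ to $\End_R(M)$ by the Curtis--Reiner lifting theorem, the key computation being the inclusion $\cp(M)\subseteq\rad\End_R(M)$, and then splits the lifted idempotent inside $\MCM(R)$ and projects back. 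You package the same inputs as: $\mod R$ is Krull--Schmidt, hence so are $\MCM(R)$ and $\ul{\MCM}(R)$ --- note that the point that for an indecomposable non-projective $M$ the ideal $\cp(M)$ is \emph{proper} in the local ring $\End_R(M)$, so that $\ul{\End}_R(M)$ is again local, is exactly the paper's key inclusion in disguise, and you should spell it out --- and then you invoke the fact that a Krull--Schmidt category has split idempotents. That last fact is a theorem (Krause; it is proved e.g.\ via uniqueness of decompositions in the idempotent completion), not ``a matter of the definition'', but it is standard, so your route is a clean, slightly more abstract rendering of the same argument, trading the explicit idempotent lifting for the Krull--Schmidt formalism.

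One genuine error to remove: the claim in your first paragraph that an idempotent of $\ul{\End}(X)$ always lifts to an idempotent of $\End(X\oplus P)$ for a suitable projective $P$, ``the standard argument that the stable category of a Frobenius category with split idempotents is again idempotent complete'', is false. The analogous statement holds for lifting \emph{isomorphisms} up to projective summands, but not for idempotents: stable categories of idempotent complete Frobenius categories need not be idempotent complete. Indeed, if that claim were true, the lemma would hold for every Iwanaga--Gorenstein ring with no hypothesis on $S$ at all, whereas the paper explicitly notes that $\cd_{sg}(R)\simeq\ul{\MCM}(R)$ is in general not idempotent complete --- the completeness of $S$ is precisely what makes the lifting possible. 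Since your steps (i)--(iv) do not rely on this claim, the proof stands once the claim is deleted, but as written it suggests a wrong general principle.
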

\begin{proof} In view of Theorem~\ref{thm:Buchweitz'-theorem}, we will show that $\underline{\MCM}(R)$ is idempotent complete.

Let $M \in \MCM(R)$ without projective direct summands and $e \in \ul{\End}_{R}(M)$ be an idempotent endomorphism. We claim that there exists an idempotent $\epsilon \in \End_{R}(M)$, which is mapped to $e$ under the canonical projection $\End_{R}(M) \ra \ul{\End}_{R}(M)$. In other words we need some lifting property for idempotents. This is known to hold for any $S$-algebra $B$, which is finitely generated as an $S$-module and any two-sided ideal $I \subseteq \rad B$ \cite[Proposition 6.5 and Theorem 6.7]{CurtisReiner1}. To prove our claim it thus suffices to show $\cp(M) \subseteq \rad \End_{R}(M)$, where $\cp(M)$ is the two-sided ideal of endomoprhisms factoring through a projective $R$-module. Assume that there exists $f \in \cp(M) \setminus \rad \End_{R}(M)$. This means that there is a $g \in \End_{R}(M)$ such that $\id_{M} - gf$ is not invertible, hence not surjective \cite[Proposition 5.8]{CurtisReiner1}. Let $M=\bigoplus_{i=1}^t M_{i}$ be a decomposition of $M$ into indecomposable modules and denote by $\iota_{i}$ and $\pi_{j}$ the canonical inclusions and projections respectively. We can without loss of generality assume that there exists some $i$ such that $\pi_{i} (\id_{M} - gf)\colon M \ra M_{i}$ is not surjective. Hence, $\pi_{i} (\id_{M} - gf)\iota_{i}=\id_{M_{i}}-\pi_{i}gf\iota_{i}=\id_{M_{i}}-(g_{1i}f_{i1}+ \cdots + g_{ii}f_{ii}+ \cdots + g_{ti}f_{ti})$ is not surjective. Here, $f_{ij}=\pi_{j}f\iota_{i}$ and $g_{ij}=\pi_{j}g\iota_{i}$. Since $f \in \cp(M)$ we have $g_{ji}f_{ij} \in \cp(M_{i})$ for $j=1, \cdots, t.$ Since $M_{i}$ is indecomposable $\End_{R}(M_{i})$ is local \cite[Proposition 6.10]{CurtisReiner1}. It follows that $\cp(M_{i}) \subseteq \rad \End_{R}(M_{i})$, for otherwise $M_{i}$ is a direct summand of a projective module, which contradicts our assumptions on $M$. Hence, $s=\sum_{j=1}^t g_{ji}f_{ij} \in \rad \End_{R}(M_{i})$ and therefore $\id_{M_{i}}-s$ is an isomorphism. Contradiction. Thus $f \in \rad \End_{R}(M)$.

To show that $\ul{\MCM}(R)$ is idempotent complete, let $M \in \ul{\MCM}(R)$. We can assume that $M$ has no projective direct summands. Let $e \in \ul{\End}_{R}(M)$ be an idempotent endomorphism. By the considerations above $e$ lifts to an idempotent $\epsilon \in \End_{R}(M)$. $\MCM(R)$ is idempotent complete since it is closed under direct summands in the abelian and hence idempotent complete category $\mod R$. Thus we have a direct sum decomposition $M \cong N_{1} \oplus N_{2}$ such that $\epsilon=\iota_{1}\pi_{1}$, where $\iota_{1}$ and $\pi_{1}$ denote the canonical inclusion respectively projection of $N_{1}$. Projecting to the stable category yields the desired factorization of $e$.  
\end{proof}

\subsection{Hom-finiteness} Assume that $k$ is a field. 

\begin{theorem}\label{thm:Hom-finiteness-Murfet}
Let $R$ be a local commutative $k$-algebra which is noetherian as a ring. Assume that the  residue field is $k$ and that $R$ has isolated singularity. Then $\cd_{sg}(R)$ is Hom-finite if and only if $R$ is Gorenstein. 
\end{theorem}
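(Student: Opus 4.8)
The two directions are of quite different character: the implication ``$R$ Gorenstein $\Rightarrow$ $\cd_{sg}(R)$ Hom-finite'' is routine, while the converse is the substance. For the forward direction, note that a Gorenstein local ring is Iwanaga--Gorenstein, so by Theorem~\ref{thm:Buchweitz'-theorem} we may work inside $\underline{\MCM}(R)$, where $\Sigma=\Omega^{-1}$ preserves maximal Cohen--Macaulay modules; thus every morphism space between shifts of objects has the form $\underline{\Hom}_R(M,N)$ with $M,N\in\MCM(R)$. Such a space is a quotient of the finitely generated $R$-module $\Hom_R(M,N)$, hence finitely generated over $R$, and for a prime $\mathfrak p\neq\m$ it localises to $\underline{\Hom}_{R_\mathfrak p}(M_\mathfrak p,N_\mathfrak p)=0$ because $R_\mathfrak p$ is regular (isolated singularity) and so $M_\mathfrak p$ is free. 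Hence $\underline{\Hom}_R(M,N)$ is a finitely generated $R$-module supported on $\{\m\}$, so it has finite length, so it is finite-dimensional over $k$ since the residue field is $k$. This gives Hom-finiteness.

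For the converse I would argue by contraposition, assuming $R$ not Gorenstein. Passing to the completion if necessary --- this preserves the residue field, the isolated singularity hypothesis, and non-Gorensteinness, and changes $\cd_{sg}$ only up to idempotent completion, which preserves Hom-finiteness --- I may assume $R$ has a normalised dualizing complex $\omega^\bullet$. That $R$ is not Gorenstein means precisely that $\omega^\bullet$ is not a perfect complex, equivalently $\mathrm{inj.dim}_R R=\infty$, equivalently $\Ext^i_R(k,R)\neq0$ for infinitely many $i$; by dimension shifting $\RHom_R(\Omega^n_R k,R)$ is then cohomologically unbounded for every $n$. The plan is to turn this unboundedness into an infinite-dimensional $\Hom$ in $\cd_{sg}(R)$ by means of Serre--Auslander--Reiten duality: for an isolated singularity, once $\cd_{sg}(R)$ is Hom-finite it is idempotent complete and Krull--Schmidt (by the techniques of Lemma~\ref{L:stable-complete}) and carries Auslander--Reiten triangles, hence a Serre functor $\mathbb S$. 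Grothendieck--Serre duality should pin $\mathbb S$ down as a shift composed with the twist by the dualizing complex $\omega^\bullet$; but such a twist sends the perfect object $R$ to $\omega^\bullet$, which is not perfect, so it cannot define an auto-equivalence --- indeed not even an endofunctor --- of the Verdier quotient $\cd_{sg}(R)=\cd^b(\mod R)/\per(R)$. This contradiction would force $\omega^\bullet$ to be perfect, \ie $\omega^\bullet\cong R[n]$ since $R$ is local, \ie $R$ is Gorenstein.

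The hard part is the step identifying the Serre functor (equivalently the Auslander--Reiten translation) of $\cd_{sg}(R)$ with a dualizing-complex twist \emph{without} tacitly assuming $R$ Gorenstein: this requires running Grothendieck--Serre duality at the level of $\cd^b(\mod R)$ and checking that it descends compatibly to the Verdier quotient, which in turn amounts to controlling the colimit presentation $\Hom_{\cd_{sg}(R)}(M,\Sigma^i N)=\varinjlim_n\underline{\Hom}_R(\Omega^n_R M,\Omega^{n-i}_R N)$ --- every term is finite-dimensional by the isolated singularity hypothesis, but the point is precisely that the colimit need not be when $R$ is not Gorenstein. A further delicate point is the non-Cohen--Macaulay case (which does arise for isolated singularities, e.g.\ cones over abelian varieties): there $k$ has no maximal Cohen--Macaulay approximation and the reduction to $\underline{\MCM}(R)$ is unavailable, so one should instead produce the offending objects by hand, with the cohomological unboundedness of $\RHom_R(\Omega^n_R k,R)$ as the natural source.
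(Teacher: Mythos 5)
Your forward direction is fine: it is essentially the standard argument (Buchweitz's equivalence from Theorem~\ref{thm:Buchweitz'-theorem}, then $\underline{\Hom}_R(M,N)$ is a finitely generated $R$-module whose localisations at non-maximal primes vanish because $R_{\mathfrak p}$ is regular and $M_{\mathfrak p}$ is free, hence has finite length), which is exactly the content of the lemma of Murfet that the paper cites for this implication.

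The converse, however, has a genuine gap, and you have in effect flagged it yourself. From Hom-finiteness (plus idempotent completeness and Krull--Schmidt via the techniques of Lemma~\ref{L:stable-complete}) you do \emph{not} get the existence of Auslander--Reiten triangles, hence no Serre functor: by Reiten--Van den Bergh these two are equivalent, but neither is a formal consequence of Hom-finiteness, and the known existence results (Auslander's theorem on AR sequences for $\MCM$ over isolated singularities, Auslander--Reiten duality) live in the Gorenstein/Cohen--Macaulay setting you are trying to avoid assuming. Moreover, even granting a Serre functor, your identification of it with ``shift composed with the twist by $\omega^\bullet$'' is exactly the unproved step, and the intended contradiction is not actually derived: when $\omega^\bullet$ is not perfect the functor $-\otimes^{\mathbb L}_R\omega^\bullet$ simply fails to descend to the quotient $\cd^b(\mod R)/\ch^b(\proj R)$, so one cannot conclude anything about a Serre functor that was never shown to be of this form. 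Your closing remark about the non-Cohen--Macaulay case (``produce the offending objects by hand'') is a plan, not a proof. The paper's route is entirely different and much shorter: Hom-finiteness gives that $\End_{\cd_{sg}(R)}(k)$ is finite-dimensional, this endomorphism algebra is the stable cohomology $\widehat{\Ext}{}^0_R(k,k)$ by Avramov--Veliche, and their Theorem~6.4 says that finiteness of this invariant forces $R$ to be Gorenstein; note that this argument needs no completion, no dualizing complex, no AR theory, and covers the non-Cohen--Macaulay case uniformly. To repair your write-up you would either have to prove the Serre-functor existence and identification claims (which is essentially a research-level detour), or replace the converse by the stable-cohomology argument.
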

\begin{proof}
Assume that $R$ is Gorenstein. Then $\cd_{sg}(R)$ is Hom-finite by \cite[Lemma 3.4]{Murfet13}. 

Assume that $\cd_{sg}(R)$ is Hom-finite. Then, the stable cohomology ${\widehat{\Ext}}^0_R(k,k)$ of $k$, being isomorphic to $\End_{\cd_{sg}(R)}(k)$ (\cite[Section 1.4.2]{AvramovVeliche07}),  is finite-dimensional over $k$, therefore $R$ is Gorenstein by \cite[Theorem 6.4]{AvramovVeliche07}.
\end{proof}

\begin{theorem}\label{thm:idempotent-completeness-and-Hom-finiteness-of-singularity-category}
Let $R$ be a Gorenstein commutative $k$-algebra. Assume that $R=\bigoplus_{p\in\mathbb{Z}}R^p$ admits the structure of a graded algebra such that $R^0=k$ and $R^p=0$ for $p<0$ and that $R$ has isolated singularity at $\m=\bigoplus_{p>0}R^p$. Then $\cd_{sg}(R)$ is idempotent complete and Hom-finite.
\end{theorem}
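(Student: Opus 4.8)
The plan is to reduce everything to a complete local ring and then quote Lemma~\ref{L:stable-complete} and Theorem~\ref{thm:Hom-finiteness-Murfet}. Since $R$ is a noetherian graded $k$-algebra with $R^0=k$, it is finitely generated as a $k$-algebra, hence excellent; let $\hat{R}$ be its completion along $\m=\bigoplus_{p>0}R^p$. Then $\hat{R}$ is a complete local noetherian ring with residue field $k$, it is Gorenstein (completion preserves the Gorenstein property), it is finitely generated as a module over itself, and, because $R$ is excellent with isolated singularity at $\m$, the local ring $\hat{R}$ again has an isolated singularity. Applying Lemma~\ref{L:stable-complete} with $S=\hat{R}$ shows that $\cd_{sg}(\hat{R})$ is idempotent complete, and Theorem~\ref{thm:Hom-finiteness-Murfet} shows that it is Hom-finite. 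So it suffices to produce a triangle equivalence $\cd_{sg}(R)\simeq\cd_{sg}(\hat{R})$.

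The completion functor $?\ten_R\hat{R}\colon\mod R\to\mod\hat{R}$ is exact and preserves projectives, so it induces a triangle functor $F\colon\cd_{sg}(R)\to\cd_{sg}(\hat{R})$; since $R$ and $\hat{R}$ are Gorenstein, Theorem~\ref{thm:Buchweitz'-theorem} identifies $F$ with the functor $\underline{\MCM}(R)\to\underline{\MCM}(\hat{R})$, $M\mapsto\hat{M}=M\ten_R\hat{R}$. To prove that $F$ is fully faithful I would argue that for $M,N\in\MCM(R)$ the stable Hom $\underline{\Hom}_R(M,N)\cong{\widehat{\Ext}}{}^0_R(M,N)$ has finite length: for every prime $\mathfrak{p}\neq\m$ the local ring $R_{\mathfrak{p}}$ is regular, so its stable cohomology vanishes, and since stable cohomology commutes with localisation, ${\widehat{\Ext}}{}^0_R(M,N)$ is a finitely generated $R$-module supported only at $\m$. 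A finite length module is unchanged by $?\ten_R\hat{R}$, and as stable cohomology also commutes with the flat base change $R\to\hat{R}$ we obtain $\underline{\Hom}_R(M,N)\cong\underline{\Hom}_{\hat{R}}(\hat{M},\hat{N})$; in passing this already shows that $\cd_{sg}(R)$ is Hom-finite.

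It then remains to show that $F$ is essentially surjective, \ie that every maximal Cohen--Macaulay $\hat{R}$-module is, up to projective summands and stable isomorphism, the completion of a maximal Cohen--Macaulay $R$-module. Since $\hat{R}$ is an isolated singularity its singularity category is generated as a thick subcategory by the residue field $k=\widehat{R/\m}$, which lies in the image of $F$; hence the image of $F$ is dense up to direct summands, and by full faithfulness the claim is equivalent to the image of $F$ being closed under direct summands. This algebraisation step will be the main obstacle: I would descend a finitely generated $\hat{R}$-module to a finitely generated $R_\m$-module, using that $R_\m$ is excellent with isolated singularity (Elkik's algebraisation theorem, or Artin approximation), and then spread it out to a graded $R$-module with the prescribed completion; alternatively one can invoke Orlov's comparison theorem between the singularity categories of a connected graded algebra and of its completion. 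Granting essential surjectivity, $F$ is a triangle equivalence, and $\cd_{sg}(R)\simeq\cd_{sg}(\hat{R})$ is idempotent complete and Hom-finite by the first paragraph.
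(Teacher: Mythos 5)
Your overall strategy coincides with the paper's: complete at $\m$, then quote Lemma~\ref{L:stable-complete} for idempotent completeness and Theorem~\ref{thm:Hom-finiteness-Murfet} for Hom-finiteness of $\cd_{sg}(\hat{R})$. The paper, however, gets the comparison $\cd_{sg}(R)\simeq\cd_{sg}(\hat{R})$ in one stroke by citing \cite[Proposition A.8]{KellerMurfetVandenBergh11}, whereas you attempt to prove it. Your first paragraph (excellence, Gorensteinness and isolated singularity of $\hat{R}$, residue field $k$) is fine, and your full-faithfulness argument via Buchweitz's theorem, vanishing of stable Hom at regular primes and flat base change is reasonable; as you note, it already yields Hom-finiteness of $\cd_{sg}(R)$.

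The genuine gap is essential surjectivity, which you yourself flag as ``the main obstacle'' and only sketch. Two points make this more than a routine omission. First, there is a circularity risk in your reduction: knowing that the image of $F$ classically generates $\cd_{sg}(\hat{R})$ and that $F$ is fully faithful only exhibits $\cd_{sg}(\hat{R})$ as the idempotent completion of $\cd_{sg}(R)$; to deduce that the image is closed under direct summands from full faithfulness one would already need $\cd_{sg}(R)$ to be idempotent complete, which is exactly what is being proved. Second, the tools you propose do not obviously close the gap: Elkik/Artin approximation give algebraisation over the henselisation under hypotheses that must be checked, and producing a finitely generated (let alone graded) $R$-module whose completion is a prescribed maximal Cohen--Macaulay $\hat{R}$-module, up to stable isomorphism, is precisely the nontrivial content of \cite[Proposition A.8]{KellerMurfetVandenBergh11}; moreover the ``comparison theorem'' of Orlov you allude to identifies only the \emph{idempotent completions} of the singularity categories of an isolated singularity and of its completion, so invoking it cannot establish the idempotent completeness half of the statement. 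As written, the proof is therefore incomplete at this step; the clean repair is the paper's route, i.e.\ to quote \cite[Proposition A.8]{KellerMurfetVandenBergh11} for the equivalence $\cd_{sg}(R)\simeq\cd_{sg}(\hat{R})$ and then conclude with Lemma~\ref{L:stable-complete} and Theorem~\ref{thm:Hom-finiteness-Murfet} (together with Auslander's theorem for the complete isolated singularity), exactly as in your final sentence.
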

\begin{proof}
Let $\widehat{R}$ be the completion of $R$ with respect to the $\m$-adic topology on $R$. Then by \cite[Proposition A.8]{KellerMurfetVandenBergh11}  there is a triangle equivalence
\[
\cd_{sg}(R)\stackrel{\simeq}{\longrightarrow} \cd_{sg}(\widehat{R}).
\]
Since $\cd_{sg}(\widehat{R})$ is idempotent complete (Lemma~\ref{L:stable-complete})  and Hom-finite (Theorem~\ref{thm:Hom-finiteness-Murfet}, \cite[Theorem]{Auslander86}), 
the proof is complete.
\end{proof}

\section{Relative singularity categories}
\label{s:relative-singularity-category}

Let $A$ be a $k$-algebra and $e\in A$ an idempotent. We call the triangle quotient
\[
\ch^b(\proj A)/\thick(eA)
\] 
the \emph{singularity category of $A$ relative to $e$}. If $A$ is of finite global dimension, this is a relative singularity category in the sense of \cite{ChenXW11}.
In \cite[Section 2.8]{KalckYang16}, we showed that up to direct summands 
$\ch^b(\proj A)/\thick(eA)$ is triangle equivalent to $\per(B)$ for a nice dg algebra $B$. In this section, we discuss the relation between $\per(B)$ and the singularity category of $eAe$.

\subsection{DG model}

We first recall \cite[Proposition 2.10]{KalckYang16}.

\begin{proposition}\label{p:recollement-from-projective-general-case} 
Let $A$ be a flat $k$-algebra and $e\in A$ an idempotent. There is a
dg $k$-algebra $B$ with a homomorphism of dg $k$-algebras
$f\colon  A\rightarrow B$ and a recollement of derived categories
\begin{align}\label{E:RecollmentByIdempotent}
\begin{xy}
\SelectTips{cm}{}
\xymatrix@C=4pc{\cd(B)\ar[rr]|{\, \, f^* \, \,}&&\cd(A)\ar[rr]|{\, \,  ?\lten_A Ae \, \, }\ar@/^15pt/[ll]|{\, \, f_\star \, \,}\ar@/_15pt/[ll]|{\, \, f_*\, \, }&&\cd(eAe)\ar@/^15pt/[ll]|{\, \, \RHom_{eAe}(Ae,?) \, \,}\ar@/_15pt/[ll]|{\, \, ?\lten_{eAe} eA \, \,}}
\end{xy},
\end{align}
such that $B^i=0$ for $i>0$ and $H^0(B)\cong A/AeA$.
\end{proposition}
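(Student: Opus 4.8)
The plan is to construct $B$ as a (derived) endomorphism dg algebra of a suitable compact generator, so that the recollement becomes an instance of the standard recollement attached to a localising subcategory and to a homological epimorphism. Concretely, I would proceed as follows.

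First, observe that $Ae$ is a finitely generated projective $A$-module, hence a compact object of $\cd(A)$. Set $\cl=\Loc_{\cd(A)}(Ae)$. By the general localisation theory recalled in Section~\ref{ss:recollement}(b), the inclusion $\cl\hookrightarrow\cd(A)$ sits in the right half of a recollement, and the quotient $\cd(A)/\cl$ together with the two adjoints of the quotient functor is the left half; it remains to identify the three terms. On the one hand, since $eAe=\End_A(Ae)$ and $Ae$ is $\ch$-projective and perfect over $A$, the functor $\RHom_{A}(Ae,?)=\Hom_A(Ae,?)=?\,e$ (the obvious evaluation) together with $?\lten_{eAe}eA$ restricts, by the last displayed equivalences of Section~4 (Keller, \cite[Lemma 4.2]{Keller94}), to a triangle equivalence $\cl\simeq\cd(eAe)$, because $A$ is flat over $k$ so that $eA$ is $\ch$-projective over $(eAe)^{op}$ and the bimodule computations go through. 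This identifies the right-hand term of the recollement with $\cd(eAe)$ and the two rightmost functors with $?\lten_A Ae$ and its adjoints.

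Next I would produce the dg algebra $B$ together with $f\colon A\to B$. Let $\bp A$ be a $\ch$-projective resolution of $A$ as a dg $A$-module (one may take $\bp A=A$), and choose a $\ch$-projective resolution $\bp(A/AeA)$; more efficiently, let $N$ be a $\ch$-projective dg $A$-module quasi-isomorphic to the two-term-style universal construction killing $Ae$, so that $A$-modules in $\cd(A)/\cl$ are computed by $N$. The cleanest route: take the left adjoint $i_*$ of the quotient $q\colon\cd(A)\to\cd(A)/\cl$ applied to the image of $A_A$; call the resulting object $\tilde A\in\cd(A)$ and set $B:=\REnd_{\cd(A)}(\tilde A)$, realised as an honest dg algebra $\cEnd_A(\bp\tilde A)$. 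Then $\tilde A$ is a compact generator of $\cl^{\perp}=\Im(i_*)$, and by \cite[Lemma 4.2]{Keller94} again, $?\lten_B\bp\tilde A\colon\cd(B)\to\cd(A)$ is fully faithful with essential image $\Loc(\tilde A)$, i.e.\ it realises the left-hand inclusion $f^*$ of the recollement; the unit $A\to\REnd_{\cd(A)}(\tilde A)\lten_B\cdots$ furnishes the dg algebra map $f\colon A\to B$, which is a homological epimorphism precisely because $f^*$ is fully faithful. The two remaining functors $f_*=?\lten_A B$ and $f_\star=\RHom_A(B,?)$ are then the adjoints forced by the recollement formalism of Section~\ref{ss:recollement}(a).

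Finally I would verify the cohomological bounds on $B$. Since $A$ has no negative self-extensions of positive degree in the relevant sense (it is an ordinary algebra, concentrated in degree $0$), and $\tilde A$ is built from $A_A$ by killing the image of $Ae$, a standard spectral-sequence / truncation argument shows $H^i(B)=H^i\REnd_{\cd(A)}(\tilde A)=\Hom_{\cd(A)}(\tilde A,\Sigma^i\tilde A)$ vanishes for $i>0$: one uses that $\tilde A$ admits a projective resolution concentrated in non-positive homological degrees relative to $A$, coming from the minimal presentation $Ae^{\oplus *}\to A\to A/AeA$ and its higher syzygies, all of which are direct summands of sums of copies of $A$ and $Ae$. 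For the identification $H^0(B)\cong A/AeA$: $H^0$ of $B$ is $\End_{\cd(A)}(\tilde A)$, and since $\tilde A$ is (the resolution of) the object representing $A/AeA$ after localisation—equivalently $q(A_A)$, with $H^0(q(A_A))=A/AeA$ as an $A$-module—one computes $\End(\tilde A)=\End_{A/AeA}(A/AeA)=A/AeA$. The main obstacle here is the bookkeeping establishing $H^{>0}(B)=0$ and the precise comparison $H^0(B)\cong A/AeA$ without assuming finite global dimension of $A$; this is handled by choosing $\tilde A$ to have an explicit non-positively-graded $\ch$-projective model, for which flatness of $A$ over $k$ (guaranteeing the bimodule resolutions behave well) is exactly what is needed. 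The rest is the routine recollement verification already packaged in Section~\ref{ss:recollement}(a)--(b) and \cite[Lemma 4.2]{Keller94}.
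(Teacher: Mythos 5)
Your overall strategy is the right one and is essentially the Nicol\'as--Saor\'in construction that the paper recalls in Section 7: identify $\cd(eAe)$ with $\Loc(eA)\subseteq\cd(A)$ via Keller's Morita theorem, take the Bousfield localisation $\tilde A=i_*i^*(A)$ of $A$ at this subcategory, and let $B$ be (a truncation of) the derived endomorphism dg algebra of $\tilde A$. (Minor but recurring slip: since $\cd(A)$ is the derived category of \emph{right} modules, the compact object is $eA$, with $\cEnd_A(eA)\cong eAe$ and $\Hom_A(eA,?)\cong ?e$; you write $Ae$ throughout for this role.) However, two of the steps that carry the actual content of the statement are not correct as written. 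First, the cohomology estimates: your claim that $\tilde A$ has a non-positively graded projective resolution ``coming from the minimal presentation $Ae^{\oplus *}\to A\to A/AeA$ and its higher syzygies, all of which are direct summands of sums of copies of $A$ and $Ae$'' conflates $\tilde A$ with $A/AeA$ (in general $\tilde A$ has nonzero negative cohomology, essentially $\Tor^{eAe}_{*}(Ae,eA)$), the higher syzygies of $A/AeA$ need not lie in $\add(A\oplus eA)$, and even granted such a resolution, non-positivity alone would not force the vanishing of positive-degree self-extensions. The correct argument is shorter: from the localisation triangle $Ae\lten_{eAe}eA\to A\to\tilde A\to\Sigma(Ae\lten_{eAe}eA)$ and the orthogonality $\Hom_{\cd(A)}(\Loc(eA),\Sigma^p\tilde A)=0$ one gets $\Hom_{\cd(A)}(\tilde A,\Sigma^i\tilde A)\cong\Hom_{\cd(A)}(A,\Sigma^i\tilde A)=H^i(\tilde A)$, and the long exact cohomology sequence of the same triangle (using that $Ae\lten_{eAe}eA$ and $A$ have no positive cohomology) gives $H^i(\tilde A)=0$ for $i>0$ and $H^0(\tilde A)\cong\cok(Ae\ten_{eAe}eA\to A)=A/AeA$; your identification $\End_{\cd(A)}(\tilde A)=\End_{A/AeA}(A/AeA)$ is not justified as stated.

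Second, the homomorphism $f\colon A\to B$: the unit of the adjunction only produces a morphism in a homotopy category (or an $A_\infty$-morphism), not a strict homomorphism of dg algebras, and the proposition asserts a genuine dg algebra map. This is exactly the delicate point that Nicol\'as--Saor\'in's construction addresses: one takes an $\ch$-injective model $Y$ of $i_*i^*(A)$, sets $C=\cEnd_A(Y)$, resolves $Y$ as a dg $C^{op}\ten A$-bimodule, and lets the residual right $A$-action on the resolution define $f'\colon A\to B'=\cEnd_{C^{op}}(Y')^{op}$; the flatness of $A$ over $k$ is used precisely here (so that $C^{op}\ten A$ computes the derived bimodule category), not in the cohomological bookkeeping where you invoke it. Finally, the statement requires $B^i=0$ for $i>0$ on the nose, not merely $H^i(B)=0$; this needs the passage to the truncation $B=\sigma^{\leq 0}B'$ (through which $f'$ factors since $A$ sits in degree $0$), a step missing from your sketch.
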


The homomorphism $f\colon A\to B$ in Proposition~\ref{p:recollement-from-projective-general-case} is a homological epimorphism (Section~\ref{ss:recollement}). In practice we will not restrict ourselves to such homomorphisms, but will also consider morphisms of the form
\[
\xymatrix{& A'\ar[ld]_s \ar[rd]^f\\
A && B
}
\]
where $s$ is a quasi-isomorphism of dg algebras and $f$ is a homological epimorphism of dg algebras. In this way, the flatness condition on $A$ in the preceding proposition can be removed, see \cite[page xxvi and the proof of Lemma 5.4.3]{Nicolas07t}.

\begin{lemma}\label{lem:B-in-homotopy-category-of-dg-algebras}
Keep the notation and assumptions in Proposition~\ref{p:recollement-from-projective-general-case}. Assume that there is a morphism $A\stackrel{s'}{\leftarrow}A'\stackrel{f'}{\rightarrow}B'$, where $s'\colon A'\to A$ is a quasi-isomorphism of dg algebras and $f'\colon A'\to B'$ is a homological epimorphism of dg algebras such that $\Im(f'^*)=\Loc(s'^*(eA))^\perp$. Then there is a triangle equivalence $\cd(B)\to \cd(B')$ taking $B_B$ to a dg $B'$-module quasi-isomorphic to $B'_{B'}$. If $k$ is a field, then $B$ and $B'$ are quasi-equivalent.
\end{lemma}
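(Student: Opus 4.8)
The plan is to produce the required triangle equivalence $\cd(B) \to \cd(B')$ by exhibiting both $\cd(B)$ and $\cd(B')$ as the \emph{same} Verdier quotient of $\cd(A)$ (equivalently, the same Bousfield localisation), and then transporting the distinguished object. First I would recall that, by Proposition~\ref{p:recollement-from-projective-general-case}, the functor $f^*\colon \cd(B)\to\cd(A)$ is fully faithful and its essential image is the subcategory $\Loc(Ae)^\perp$ appearing in the recollement~\eqref{E:RecollmentByIdempotent}; indeed, $f^*$ identifies $\cd(B)$ with $\ker(?\lten_A Ae) = \Loc_{\cd(A)}(Ae)^\perp$, since $?\lten_A Ae$ is the $j^*$-functor of the recollement and $i_*=f^*$ is the right-orthogonal complement functor. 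Similarly, on the primed side, $s'\colon A'\to A$ being a quasi-isomorphism, the functor $s'^*\colon\cd(A)\to\cd(A')$ is a triangle equivalence, and the hypothesis $\Im(f'^*)=\Loc(s'^*(eA))^\perp$ says that $f'^*$ identifies $\cd(B')$ with the subcategory of $\cd(A')$ right-orthogonal to $\Loc(s'^*(eA))$. Pulling this back through the equivalence $s'^*$, I get that $(s'^*)^{-1}\circ f'^*$ identifies $\cd(B')$ with $\Loc_{\cd(A)}(eA)^\perp$ inside $\cd(A)$. (Here one should be a little careful about whether the relevant generator is $eA$ or $Ae$, but since $s'$ is a quasi-isomorphism the localising subcategory generated by $s'^*(eA)$ corresponds under $s'^*$ to the one generated by $eA$, and this matches the $Ae$-column of the recollement after the obvious identifications; I would make this bookkeeping explicit.)

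The second step is to observe that $\Loc_{\cd(A)}(Ae)^\perp$ and $\Loc_{\cd(A)}(eA)^\perp$ are literally the same subcategory of $\cd(A)$ — this is because in the recollement of Proposition~\ref{p:recollement-from-projective-general-case} the kernel of $?\lten_A Ae$ is exactly the image of $f^*$, and one knows (from \cite{NicolasSaorin09}, \cite{Nicolas07t}) that this subcategory depends only on the homological epimorphism $f$, whose target $B$ has $H^0(B)=A/AeA$; the cohomological data that pins it down is the two-sided ideal generated by $e$. So both $f^*(\cd(B))$ and $(s'^*)^{-1}(f'^*(\cd(B')))$ are the same full triangulated subcategory $\cu\subseteq\cd(A)$. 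Since $f^*$ and $f'^*$ are fully faithful with this common essential image $\cu$, the composite $(s'^*)^{-1}\circ f'^*$ followed by the inverse of $f^*$ (restricted to $\cu$) is a triangle equivalence $G\colon\cd(B')\to\cd(B)$; I actually want $G^{-1}\colon\cd(B)\to\cd(B')$.

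The third step is to identify where $B_B$ goes. Under $f^*$, the module $B_B\in\cd(B)$ maps to $f^*(B)$, which as a complex of $A$-modules is just $B$ regarded via restriction along $f$; this is the distinguished generator of $\cu$ (it corepresents the quotient functor). On the primed side, $f'^*(B'_{B'})=B'$ viewed in $\cd(A')$, and its image under $(s'^*)^{-1}$ lands in $\cu$ as a generator. Both are, up to isomorphism in $\cd(A)$, the image of the unit, i.e. the object representing the recollement quotient; hence $f^*(B_B)\cong (s'^*)^{-1}(f'^*(B'_{B'}))$ in $\cd(A)$, and therefore $G^{-1}(B_B)$ is isomorphic in $\cd(B')$ to an object whose restriction to $A'$ agrees with $B'$, i.e. $G^{-1}(B_B)$ is quasi-isomorphic as a dg $B'$-module to $B'_{B'}$ — which is exactly the claim. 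Finally, for the last sentence: when $k$ is a field, a triangle equivalence $\cd(B)\to\cd(B')$ sending (the class of) $B_B$ to (the class of) $B'_{B'}$ is, by Keller's theory (\cite[Sections 7.1, 7.2]{Keller94}, also used in \S\ref{ss:quasi-equiv}), induced by a quasi-equivalence of dg algebras: one can realise it as $?\lten_{B'} X$ for a dg $B'$-$B$-bimodule $X$ with the property that $B'\to X$, $b'\mapsto b'x$, and $B\to X$, $b\mapsto xb$, are quasi-isomorphisms, where $x\in Z^0(X)$ is a cocycle corepresenting the image of $B'_{B'}$; over a field every dg module has an $\ch$-projective resolution so this standard construction applies verbatim.

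The main obstacle I expect is the bookkeeping in the first two steps: matching the generator $s'^*(eA)$ of the hypothesis with the generator ($Ae$ or $eA$) controlling the recollement in Proposition~\ref{p:recollement-from-projective-general-case}, and verifying that the two perpendicular subcategories genuinely coincide rather than merely being equivalent. One has to use that $s'$ is a quasi-isomorphism (so $s'^*$ is an equivalence carrying $\Loc(eA)$ to $\Loc(s'^*(eA))$) and that the recollement's kernel subcategory is intrinsic to the homological-epimorphism data; once those identifications are in place, the rest is a formal manipulation of recollements and Keller's quasi-equivalence dictionary, with no serious computation.
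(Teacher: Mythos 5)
Your proposal is correct and follows essentially the same route as the paper: the paper likewise observes that $\Im(f'^*)=\Loc(s'^*(eA))^\perp=\Im(fs')^*$, takes the composite $\cd(B)\xrightarrow{?\lten_B B}\cd(A)\to\cd(B')$ as the equivalence sending $B_B$ to an object isomorphic to $B'_{B'}$, and invokes \cite[Lemma 6.3(b)]{Keller94} for the quasi-equivalence when $k$ is a field. Your worried bookkeeping in step 2 in fact dissolves: $\ker(?\lten_A Ae)=\Loc_{\cd(A)}(eA)^\perp$ (note $Ae$ is not a right $A$-module, so $\Loc_{\cd(A)}(Ae)$ is not the relevant subcategory), which matches the hypothesis directly; and the field-case step is legitimate precisely because your equivalence is a composite of standard (bimodule-induced) functors, not an arbitrary triangle equivalence.
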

\begin{proof}
Under the assumptions, $s'^*\colon \cd(A)\to\cd(A')$ is a triangle equivalence and the equalities $\Im(f'^*)=\Loc(s'^*(eA))^\perp=\Im(fs')^*$ hold. So the composite functor
\[
\xymatrix{\cd(B)\ar[r]^{?\lten_B B} & \cd(A)\ar[r]^{?\lten_A B'} & \cd(B')}
\]
is a triangle equivalence which takes $B$ to a dg $B'$-module quasi-isomorphic to $B'_{B'}$. If $k$ is a field, then by \cite[Lemma 6.3(b)]{Keller94} we obtain that $B$ and $B'$ are quasi-equivalent. 
\end{proof}

\begin{remark}
The triangle equivalence $\cd(B)\to\cd(B')$  in Lemma~\ref{lem:B-in-homotopy-category-of-dg-algebras} restricts to a triangle equivalence $\per(B)\to\per(B')$ and, if $A/AeA$ is right noetherian as a ring, to a triangle equivalence $\cd_{fg}(B)\to\cd_{fd}(B')$. Therefore all results in the rest of this section remain true with $B$ replaced by $B'$.
\end{remark}

Next we recall and generalise results from \cite[Section 2.8]{KalckYang16} on properties of $B$.

\begin{corollary}\label{c:restriction-and-induction} Keep the notation and assumptions in Proposition~\ref{p:recollement-from-projective-general-case}.
\begin{itemize}
 \item[(a)] \emph{(\cite[Corollary 2.12(a)]{KalckYang16})}
The functor
$i^*$ induces a triangle equivalence up to direct summands
\begin{align} 
\ch^b(\proj A)/\thick(eA){\longrightarrow}\per(B).\nonumber 
\end{align}

 \item[(b)] Assume that $A/AeA$ is right noetherian as a ring. Let $\cd_{fg,A/AeA}(A)$ be the full subcategory of $\cd_{fg}(A)$ consisting of
complexes with cohomologies supported on $A/AeA$. The functor $i_*$
restricts to a triangle equivalence
$\cd_{fg}(B)\stackrel{\simeq}{\longrightarrow}\cd_{fg,A/AeA}(A)$ with quasi-inverse the restriction of $i^*$.
Moreover, the latter category coincides with
$\thick_{\cd(A)}(\mod A/AeA)$.
\item[(c)] Let $q\colon \ch^b(\proj A)\rightarrow \ch^b(\proj A)/\thick(eA)$ be the canonical quotient functor. Assume that $A/AeA$ is right noetherian as a ring and that all finitely generated $A/AeA$-modules have finite projective dimension over $A$. Then the triangle equivalence up to direct summands in (a) restricts to a triangle equivalence
\[
\thick(q(\mod A/AeA))\longrightarrow \cd_{fg}(B).
\]
\end{itemize}
\end{corollary}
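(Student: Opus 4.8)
The plan is to take part (a) for granted, since it is \cite[Corollary 2.12(a)]{KalckYang16}, and to deduce (b) and (c) from it together with Proposition~\ref{prop:standard-t-str}. Recall that the equivalence up to direct summands in (a) is induced by $i^{*}=f_{*}=?\lten_{A}B$: because $i^{*}\circ j_{!}=0$ in the recollement \eqref{E:RecollmentByIdempotent} and $eA=j_{!}(eAe)$ lies in the image of $j_{!}$, the functor $i^{*}$ annihilates $\thick(eA)$ and hence factors as $i^{*}=F\circ q$, with $F\colon\ch^{b}(\proj A)/\thick(eA)\to\per(B)$ fully faithful. I will use throughout that $f$ is a homological epimorphism, so that $i_{*}=f^{*}$ is a full embedding whose essential image is the kernel of $j^{!}=?\lten_{A}Ae$ and in particular a \emph{thick} subcategory of $\cd(A)$, and that $B$ is non-positive with $H^{0}(B)\cong A/AeA$.

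For (b) I would first check the last assertion, $\cd_{fg,A/AeA}(A)=\thick_{\cd(A)}(\mod A/AeA)$: the inclusion $\supseteq$ is clear and $\subseteq$ follows by dévissage, truncating an object along the standard $t$-structure (it has only finitely many nonzero cohomology groups) and, if needed, filtering each finitely generated cohomology module by powers of $AeA$. Since $H^{0}(B)\cong A/AeA$ is right noetherian, Proposition~\ref{prop:standard-t-str}(a) applied to $B$ gives $\cd_{fg}(B)=\thick_{\cd(B)}(\mod A/AeA)$, an $A/AeA$-module being viewed in $\cd(B)$ via $B\to H^{0}(B)$. Restriction of scalars $i_{*}=f^{*}$ sends such a module to the same module regarded in $\cd(A)$ through the quotient $A\to A/AeA$; as $i_{*}$ is fully faithful with thick image, this forces $i_{*}(\cd_{fg}(B))=\thick_{\cd(A)}(\mod A/AeA)=\cd_{fg,A/AeA}(A)$. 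Thus $i_{*}$ restricts to a triangle equivalence onto $\cd_{fg,A/AeA}(A)$, with quasi-inverse the restriction of $i^{*}$ since $i^{*}\circ i_{*}\cong\id$.

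For (c), the extra hypothesis says that every finitely generated $A/AeA$-module belongs to $\per(A)=\ch^{b}(\proj A)$, hence $\thick_{\cd(A)}(\mod A/AeA)\subseteq\ch^{b}(\proj A)$; moreover $i^{*}(A)\cong B$, so $i^{*}$ maps $\per(A)$ into $\per(B)$ and therefore $\cd_{fg}(B)=i^{*}(\thick_{\cd(A)}(\mod A/AeA))$ is contained in, and (being closed under direct summands) thick in, $\per(B)$. By (b), $i^{*}$ restricts to an \emph{honest} equivalence $\thick_{\cd(A)}(\mod A/AeA)\to\cd_{fg}(B)$; writing $i^{*}=F\circ q$ and using that $F$ is fully faithful, I would conclude that $q$ is fully faithful on $\thick_{\cd(A)}(\mod A/AeA)$ and that $F$ carries the triangulated subcategory $q(\thick_{\cd(A)}(\mod A/AeA))$ equivalently onto $\cd_{fg}(B)$. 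Finally, $F(q(\mod A/AeA))=i^{*}(\mod A/AeA)$ is a set of classical generators of the thick subcategory $\cd_{fg}(B)$ of $\per(B)$, so $F$ sends all of $\thick(q(\mod A/AeA))$ into $\cd_{fg}(B)$; since $\thick(q(\mod A/AeA))$ contains $q(\thick_{\cd(A)}(\mod A/AeA))$, on which $F$ is already surjective onto $\cd_{fg}(B)$, and $F$ is fully faithful, this restriction of $F$ is the desired triangle equivalence $\thick(q(\mod A/AeA))\to\cd_{fg}(B)$.

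The step I expect to be the main obstacle is controlling \emph{honesty}: part (a) only gives an equivalence up to direct summands, and a priori restricting such a functor need not yield an honest equivalence. The way around this is to route the argument through (b), where $i_{*}$ is a genuine full embedding and hence produces an honest equivalence onto $\thick_{\cd(A)}(\mod A/AeA)$; the identity $i^{*}=F\circ q$ then transfers honesty to the quotient side. The secondary point to handle carefully is that $\Im(i_{*})$ is thick — not merely triangulated — in $\cd(A)$ (it is the kernel of the triangle functor $?\lten_{A}Ae$), which is exactly what allows it to absorb all of $\thick_{\cd(A)}(\mod A/AeA)$.
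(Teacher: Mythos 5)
Your proposal is correct, and its overall shape (get (b) from the recollement plus Proposition~\ref{prop:standard-t-str}, then deduce (c) from (a) and (b)) matches the paper's, but the two middle steps are justified differently. For (b) the paper gives no argument at all, deferring to \cite[Corollary 2.12(b)]{KalckYang16}; your dévissage argument --- $\cd_{fg}(B)=\thick_{\cd(B)}(\mod H^0(B))$ by Proposition~\ref{prop:standard-t-str}(a), $\Im(i_*)=\Loc(eA)^\perp$ is thick and contains $\mod A/AeA$, hence $i_*(\cd_{fg}(B))=\thick_{\cd(A)}(\mod A/AeA)=\cd_{fg,A/AeA}(A)$ --- is in the same spirit as that reference and is complete. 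For (c) the paper's key input is different: it notes that $\thick(\mod A/AeA)$ is right perpendicular to $\thick(eA)$ (because $\Hom_{\cd(A)}(eA,\Sigma^pM)=H^p(Me)=0$ for such objects) and invokes \cite[Lemma 9.1.5]{Neeman99} to conclude that $q$ restricts to a triangle equivalence $\thick(\mod A/AeA)\to\thick(q(\mod A/AeA))$, then finishes with (b). You instead extract the full faithfulness of $q$ on $\thick_{\cd(A)}(\mod A/AeA)$ from the factorisation $i^*=F\circ q$, using that $F$ is fully faithful by (a) and that $i^*$ restricts to an honest equivalence by (b). Both routes work: the paper's is shorter and independent of (a) once the perpendicularity is observed, while yours avoids the external quotient-category lemma and, as a bonus, makes explicit the identification of the image with $\thick(q(\mod A/AeA))$ (closure under direct summands via the generator argument in $\cd_{fg}(B)$), a point the paper leaves implicit in ``follows immediately from (b)''. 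Your closing remarks about honesty of the restricted equivalence and the thickness of $\Im(i_*)$ are exactly the right points to flag.
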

\begin{proof}
(b) Similar to the proof of \cite[Corollary 2.12(b)]{KalckYang16}.

(c) Since $\thick(\mod A/AeA)$ is right perpendicular to $\thick(eA)$, it follows from for example \cite[Lemma 9.1.5]{Neeman99} that $q$ restricts to a triangle equivalence 
\[\thick(\mod A/AeA)\to \thick(q(\mod A/AeA)).\]
The desired result then follows immediately from (b).
\end{proof}

\begin{proposition}\label{p:finitely-generated-are-perfect}
We keep the notation and assumptions in Proposition~\ref{p:recollement-from-projective-general-case}. If $A/AeA$ is right noetherian as a ring and that all finitely generated $A/AeA$-modules have finite projective dimension over $A$, then
\begin{itemize}
\item[(a)] $\cd_{fg}(B)\subseteq\per(B)$;
\item[(b)] $H^i(B)$ is finitely generated over $H^0(B)$  for any $i\in\mathbb{Z}$;
\item[(c)] $\per(B)$ has a $t$-structure whose heart is equivalent to $\mod A/AeA$;
\item[(d)]
assume additionally that $k$ is a field and $A/AeA$ is finite-dimensional over $k$, then $\per(B)$ is Hom-finite.
\end{itemize}
\end{proposition}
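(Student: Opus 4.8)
The plan is to derive all four statements from Corollary~\ref{c:restriction-and-induction}(c) together with the general facts about non-positive dg algebras collected in Subsection~\ref{ss:standard-t-co-t-str}. The point of departure is that $B$ is a non-positive dg $k$-algebra, since $B^i=0$ for $i>0$ by Proposition~\ref{p:recollement-from-projective-general-case}, and that $H^0(B)\cong A/AeA$ is right noetherian by hypothesis; moreover the hypotheses of Proposition~\ref{p:finitely-generated-are-perfect} are exactly the ones under which Corollary~\ref{c:restriction-and-induction}(c) applies (the flatness and standing assumptions coming from Proposition~\ref{p:recollement-from-projective-general-case}).

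First I would prove (a). By Corollary~\ref{c:restriction-and-induction}(c), the triangle equivalence up to direct summands $\ch^b(\proj A)/\thick(eA)\to\per(B)$ of Corollary~\ref{c:restriction-and-induction}(a) restricts to an honest triangle equivalence $\thick(q(\mod A/AeA))\stackrel{\simeq}{\longrightarrow}\cd_{fg}(B)$. Since $\thick(q(\mod A/AeA))$ is a subcategory of $\ch^b(\proj A)/\thick(eA)$, its essential image, which is $\cd_{fg}(B)$, is contained in the essential image of the functor of Corollary~\ref{c:restriction-and-induction}(a), and that functor takes values in $\per(B)$. Hence $\cd_{fg}(B)\subseteq\per(B)$.

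With (a) in hand, (b) and (c) follow by quoting Subsection~\ref{ss:standard-t-co-t-str}. For (b) I would apply Proposition~\ref{prop:hom-finiteness} to the non-positive dg algebra $B$: its hypotheses, namely that $H^0(B)$ is right noetherian and that $\cd_{fg}(B)\subseteq\per(B)$, hold by assumption and by (a), so $H^i(B)$ is finitely generated over $H^0(B)$ for every $i\in\mathbb{Z}$. For (c), Proposition~\ref{prop:standard-t-str}(b) (again with the hypothesis $\cd_{fg}(B)\subseteq\per(B)$ supplied by (a)) gives a $t$-structure on $\per(B)$ with the same heart as the standard $t$-structure on $\cd_{fg}(B)$, and Proposition~\ref{prop:standard-t-str}(a) identifies that heart, via $H^0$, with $\mod H^0(B)=\mod A/AeA$.

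Finally, for (d) I would argue by d\'evissage. Assuming $k$ is a field and $A/AeA$ is finite-dimensional, the algebra $H^0(B)=A/AeA$ is finite-dimensional, so by (b) each $H^i(B)$ is finite-dimensional over $k$. Since $\cHom_B(B,?)$ is the identity functor, $\Hom_{\per(B)}(\Sigma^a B,\Sigma^b B)=H^{b-a}(B)$ is finite-dimensional for all $a,b\in\mathbb{Z}$; and the class of pairs $(M,N)$ of objects of $\per(B)$ with $\Hom_{\per(B)}(M,N)$ finite-dimensional is closed under shifts, cones and passing to direct summands in each variable, by the long exact sequences of $\Hom$-groups. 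As $\per(B)=\thick_{\cd(B)}(B)$ and every object is built from $B$ in finitely many such steps, this class contains all pairs, so $\per(B)$ is Hom-finite. The argument is essentially formal, the real content having been placed in Corollary~\ref{c:restriction-and-induction}(c) and in the results of Subsection~\ref{ss:standard-t-co-t-str}; the only points that need a little care are the bookkeeping in (a), i.e.\ that the restricted functor of Corollary~\ref{c:restriction-and-induction}(c) genuinely lands $\cd_{fg}(B)$ inside $\per(B)$ and not merely inside its idempotent completion, and running the d\'evissage in (d) over the finite construction of each object so that no infinite accumulation of $\Hom$-spaces occurs.
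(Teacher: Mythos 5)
Your proof is correct, and for parts (b) and (c) it coincides with the paper's, which simply invokes Propositions~\ref{prop:standard-t-str} and~\ref{prop:hom-finiteness} exactly as you do. The two mild differences are in (a) and (d). For (a) the paper argues directly from Corollary~\ref{c:restriction-and-induction}(b): since $i_*$ is fully faithful and $i^*(A)=B$, one has $\cd_{fg}(B)=i^*i_*(\cd_{fg}(B))$, and $i_*(\cd_{fg}(B))=\thick_{\cd(A)}(\mod A/AeA)\subseteq\ch^b(\proj A)$ by the finite-projective-dimension hypothesis, whence $\cd_{fg}(B)\subseteq i^*(\thick(A))\subseteq\per(B)$; you instead read the inclusion off the statement of Corollary~\ref{c:restriction-and-induction}(c). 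That is legitimate and non-circular (the proof of part (c) of that corollary uses only part (b) and Neeman's localisation lemma, not the present proposition), but it shifts the real content --- that $i^*$ carries $\thick_{\cd(A)}(\mod A/AeA)$ into $\per(B)$ --- into the cited statement rather than exhibiting it, so the paper's version is the more transparent one; both rest on the same facts. For (d) the paper simply cites \cite[Proposition 2.5]{KalckYang16}, whereas you reprove it by d\'evissage: each $H^p(B)$ is finite-dimensional by (b) and the finite-dimensionality of $A/AeA$, the pairs $(X,Y)$ with all $\Hom(X,\Sigma^pY)$ finite-dimensional form a class closed under shifts, cones and summands in each variable, and $\per(B)=\thick(B)$. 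This is a correct, self-contained alternative; note that it only needs each individual $H^p(B)$ to be finite-dimensional, not the total cohomology, which your argument implicitly respects since Hom-finiteness is checked one shift at a time.
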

\begin{proof} (a) We have $i^*i_*(\cd_{fg}(B))=\cd_{fg}(B)$ because $i_*$ is fully faithful, and $i^*(A)=B$.
Therefore
in order to show $\cd_{fg}(B)\subseteq\per(B)$ it suffices to show that $i_*(\cd_{fg}(B))\subseteq \ch^b(\proj A)$.
By Corollary~\ref{c:restriction-and-induction}(b),   $i_*(\cd_{fg}(B))=\thick_{\cd(A)}(\mod A/AeA)$.
By assumption $\mod-A/AeA\subseteq \ch^b(\proj A)$, and hence $\thick_{\cd(A)}(\mod A/AeA)\subseteq \ch^b(\proj A)$. Thus $\cd_{fg}(B)\subseteq \per(B)$.

(b) and (c) follow from Propositions~\ref{prop:standard-t-str} and~\ref{prop:hom-finiteness}.

(d) follows from \cite[Proposition 2.5]{KalckYang16}.
\end{proof}

\subsection{The relationship between the singularity category and the relative singularity category}

\begin{theorem}\label{t:singularity-category-vs-relative-singularity-category}
Let $A$ be a flat $k$-algebra which is right noetherian as a ring and let $e\in A$ be an idempotent such that all finitely generated $A/AeA$-modules have finite projective dimension over $A$. Let $B$ be obtained as in Proposition~\ref{p:recollement-from-projective-general-case}. 
Then there is a triangle equivalence up to direct summands
\begin{align}
 \thick_{\cd_{sg}(eAe)}(Ae) \longrightarrow \per(B)/\cd_{fg}(B),
\end{align}
which is a triangle equivalence if and only if $\ch^b(\proj A)/\thick(eA)$ is idempotent complete. If $\cd_{sg}(eAe)$ is idempotent complete, then this is a triangle equivalence, and the categories $\ch^b(\proj A)/\thick(eA)$ and $\per(B)/\cd_{fg}(B)$ are idempotent complete.

Assume that $A$ has finite global dimension. Then  $\thick_{\cd_{sg}(eAe)}(Ae)=\cd_{sg}(eAe)$ and there is a triangle equivalence up to direct summands
\begin{align}
\cd_{sg}(eAe)\longrightarrow \per(B)/\cd_{fg}(B).
\end{align}
If $\ch^b(\proj A)/\thick(eA)$ is idempotent complete, then this is a triangle equivalence.
\end{theorem}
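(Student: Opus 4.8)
The plan is to produce a diagram of triangulated categories and triangle functors that fit together, and then reduce the statement to the localisation theorem of Neeman--Ravenel (Verdier quotient compatibility). First I would invoke Corollary~\ref{c:restriction-and-induction}(a) to obtain the triangle equivalence up to direct summands $i^*\colon\ch^b(\proj A)/\thick(eA)\to\per(B)$, and Corollary~\ref{c:restriction-and-induction}(c), which applies under the hypothesis that all finitely generated $A/AeA$-modules have finite projective dimension over $A$, giving $\thick(q(\mod A/AeA))\simeq\cd_{fg}(B)$ compatibly with $i^*$. Thus $\per(B)/\cd_{fg}(B)$ is, up to direct summands, the quotient $\big(\ch^b(\proj A)/\thick(eA)\big)/\thick(q(\mod A/AeA))$, which by transitivity of Verdier quotients is $\ch^b(\proj A)/\thick(eA,\mod A/AeA)$. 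Here I use that $\thick(eA)$ together with $\mod A/AeA$ generate, as a thick subcategory of $\cd^b(\mod A)=\ch^b(\proj A)$, precisely $\thick(\mod A)=\ch^b(\proj A)$ is \emph{not} what we want; rather the point is that $\thick(eA,\mod A/AeA)$ is the kernel of the restriction functor to $\cd_{sg}(eAe)$, which is what we analyse next.

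The heart of the argument is to identify $\ch^b(\proj A)/\thick(eA,\mod A/AeA)$ with $\thick_{\cd_{sg}(eAe)}(Ae)$. For this I would consider the composite $\ch^b(\proj A)\xrightarrow{?\ten_A Ae}\ch^b(\proj eAe)\hookrightarrow\cd^b(\mod eAe)\to\cd_{sg}(eAe)$. Its image is $\thick_{\cd_{sg}(eAe)}(Ae)$ since $eA$ is a compact generator in the relevant sense and $A\ten_A Ae=Ae$. The functor $?\ten_A Ae$ kills $\thick(eA)$ (as $eA\ten_A Ae=eAe$ is projective, hence zero in $\cd_{sg}$) and kills $\thick(\mod A/AeA)$ (as every such module is annihilated by $e$ on the right, so $M\ten_A Ae=Me=0$). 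Conversely I must show these generate the full kernel: if $X\in\ch^b(\proj A)$ has $X\ten_A Ae\in\ch^b(\proj eAe)$, i.e. $X$ becomes perfect over $eAe$, then $X$ should lie in $\thick(eA,\mod A/AeA)$. This is the recollement (\ref{E:RecollmentByIdempotent}) in disguise: the triangle $j_!j^!X\to X\to i_*i^*X\to$ expresses $X$ as an extension of an object supported on $A/AeA$ (coming from $\cd(B)$, and one checks it lands in $\cd_{fg}(B)$ exactly when $Xe$ is perfect, using $H^0(B)=A/AeA$ and Proposition~\ref{p:finitely-generated-are-perfect}(a)) by an object of $\thick(eA)=\Loc(eA)\cap\per$. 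So the kernel computation follows from the recollement together with the finiteness statements already established.

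Combining the two paragraphs, the induced functor $\big(\ch^b(\proj A)/\thick(eA)\big)/\thick(q(\mod A/AeA))\to\thick_{\cd_{sg}(eAe)}(Ae)$ is a triangle equivalence up to direct summands, and precomposing with the equivalences up to summands from Corollary~\ref{c:restriction-and-induction} yields the desired functor $\thick_{\cd_{sg}(eAe)}(Ae)\to\per(B)/\cd_{fg}(B)$, which is a genuine equivalence precisely when the source of the quotient, namely $\ch^b(\proj A)/\thick(eA)$, is idempotent complete (the quotient of an idempotent complete category need not be idempotent complete, but a Verdier quotient is dense, so the only failure of essential surjectivity is up to summands; and a fully faithful functor that is dense up to summands is an equivalence iff the target is idempotent complete, which here reduces to the source by Corollary~\ref{c:restriction-and-induction}(a)). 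If $\cd_{sg}(eAe)$ is idempotent complete then so is its thick subcategory $\thick_{\cd_{sg}(eAe)}(Ae)$, forcing the equivalence and transporting idempotent completeness back to $\ch^b(\proj A)/\thick(eA)$ and $\per(B)/\cd_{fg}(B)$.

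For the final assertion, when $A$ has finite global dimension every finitely generated $eAe$-module, restricted along $A\to eAe$ — more precisely every object of $\cd^b(\mod eAe)$ — can be resolved using $Ae$: indeed $\cd^b(\mod eAe)=\thick(eAe)\ast\cdots$ and $eAe=eA\ten_A Ae$ shows $\cd_{sg}(eAe)=\thick_{\cd_{sg}(eAe)}(Ae)$ because $Ae$ generates $\cd^b(\mod eAe)$ modulo perfect complexes (any $eAe$-module $N$ admits a surjection from a sum of copies of $eA\ten_A Ae$? — rather, one uses that $A$ has finite global dimension so that $\cd^b(\mod A)=\per(A)=\ch^b(\proj A)$ and the functor $?\ten_A Ae$ from $\ch^b(\proj A)$ already surjects onto all of $\cd_{sg}(eAe)$ up to summands, since $\cd_{sg}(eAe)$ is generated by $eAe$ and its syzygies, all of which are direct summands of modules of the form $(\text{f.g.\ proj }A)\ten_A Ae$). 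Hence the previously constructed functor becomes $\cd_{sg}(eAe)\to\per(B)/\cd_{fg}(B)$, an equivalence up to direct summands, and a genuine equivalence once $\ch^b(\proj A)/\thick(eA)$ is idempotent complete. The main obstacle I anticipate is the precise kernel computation in the second paragraph — showing that an object of $\ch^b(\proj A)$ which becomes perfect after $?\ten_A Ae$ necessarily lies in $\thick(eA,\mod A/AeA)$ — which requires careful use of the recollement (\ref{E:RecollmentByIdempotent}), the identification $H^0(B)\cong A/AeA$, and the finite-projective-dimension hypothesis to control when the $\cd(B)$-part lands in $\cd_{fg}(B)$ via Proposition~\ref{p:finitely-generated-are-perfect}.
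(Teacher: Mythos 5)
Your first reduction (combining Corollary~\ref{c:restriction-and-induction}(a) and (c) to identify $\per(B)/\cd_{fg}(B)$, up to direct summands, with $\bigl(\ch^b(\proj A)/\thick(eA)\bigr)/\thick(q(\mod A/AeA))$) is exactly the paper's functor $G$, and your discussion of when "up to direct summands" can be upgraded, and of the idempotent-completeness transfer, matches the paper. The genuine gap is in your "heart of the argument". The paper does not prove the identification of $\ch^b(\proj A)/\thick(eA,\mod A/AeA)$ with $\thick_{\cd_{sg}(eAe)}(Ae)$ from scratch: it quotes \cite[Proposition 3.3]{KalckYang16}, which asserts that $\Hom_A(eA,?)=(-)e$ induces a triangle \emph{equivalence} $\bigl(\cd^b(\mod A)/\thick(eA)\bigr)/\thick(q(\mod A/AeA))\to\cd_{sg}(eAe)$, and then simply restricts this equivalence to the thick subcategory generated by $A_A$. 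What you establish instead is (i) that $(-)e$ kills $\thick(eA,\mod A/AeA)$ and (ii) that the kernel of the composite $\ch^b(\proj A)\to\cd_{sg}(eAe)$ is exactly this subcategory (your recollement argument for (ii) is essentially sound, except that $i^*X\in\per(B)\subseteq\cd_{fg}(B)$ holds for \emph{every} $X\in\per(A)$ under the standing hypotheses, not "exactly when $Xe$ is perfect"). But kernel identification plus a claim about the image does not yield that the induced functor out of the Verdier quotient is fully faithful, and full faithfulness is the whole point: without fullness you cannot even conclude that the essential image is a triangulated subcategory dense in $\thick_{\cd_{sg}(eAe)}(Ae)$, since building cones inside the target requires lifting morphisms between image objects back to $\ch^b(\proj A)$, i.e.\ a calculus-of-fractions/localisation statement. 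This is precisely the nontrivial content supplied by \cite[Proposition 3.3]{KalckYang16} (where the noetherian and finite-projective-dimension hypotheses enter), and your proposal never addresses it, so the step "combining the two paragraphs, the induced functor \dots is a triangle equivalence up to direct summands" is unjustified.

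Two smaller points. First, your composite is misstated: $?\ten_A Ae$ does not send $\ch^b(\proj A)$ into $\ch^b(\proj eAe)$ (the $eAe$-module $Ae$ is typically not projective -- otherwise the singularity category would see nothing); the correct target is $\cd^b(\mod eAe)$, followed by the projection to $\cd_{sg}(eAe)$. Second, for the finite-global-dimension statement your argument that $Ae$ generates $\cd_{sg}(eAe)$ is left hanging; the paper's route is cleaner: when $\gldim A<\infty$ one has $\cd^b(\mod A)=\ch^b(\proj A)$, so the restricted functor $\bar F$ coincides with the full equivalence $F$ of \cite[Proposition 3.3]{KalckYang16}, whose essential surjectivity immediately gives $\thick_{\cd_{sg}(eAe)}(Ae)=\cd_{sg}(eAe)$. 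If you want a self-contained proof along your lines, you must supply the fully-faithfulness argument (for instance by proving the bounded localisation statement that $(-)e$ induces an equivalence $\cd^b(\mod A)/\thick(\mod A/AeA)\simeq\cd^b(\mod eAe)$ under the stated hypotheses), which is a substantive piece of work rather than a formal consequence of the recollement.
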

\begin{proof} We adopt the notation in Proposition~\ref{p:recollement-from-projective-general-case} and Corollary~\ref{c:restriction-and-induction}.

By \cite[Proposition 3.3]{KalckYang16}, there is a triangle equivalence
\begin{align}
\frac{\cd^b(\mod A)/\thick(eA)}{\thick(q(\mod A/AeA))}\stackrel{F}{\longrightarrow} \frac{\cd^b(\mod eAe)}{\ch^b(\proj eAe)}=\cd_{sg}(eAe).\nonumber
\end{align}
It is induced from $\Hom_A(eA,?)\colon \cd^b(\mod A)\to\cd^b(\mod eAe)$. Since $\ch^b(\proj A)$ is the thick subcategory of $\cd^b(\mod A)$ generated by $A_A$ and $F(A_A)=Ae$, it follows that $F$ restricts to a triangle equivalence
\begin{align}
\frac{\ch^b(\proj A)/\thick(eA)}{\thick(q(\mod A/AeA))}\stackrel{\bar{F}}{\longrightarrow} \thick_{\cd_{sg}(eAe)}(Ae).\nonumber
\end{align}
By Corollary~\ref{c:restriction-and-induction},
there is a triangle equivalence up to direct summands
\begin{align}
\frac{\ch^b(\proj A)/\thick(eA)}{\thick(q(\mod A/AeA))}
\stackrel{G}{\longrightarrow} \per(B)/\cd_{fg}(B),\nonumber
\end{align}
which is an equivalence if and only if $\ch^b(\proj A)/\thick(eA)$ is idempotent complete. As a consequence, $G\circ \bar{F}^{-1}\colon \thick_{\cd_{sg}(eAe)}\to\per(B)/\cd_{fg}(B)$ is a triangle equivalence up to direct summands and it is an equivalence if and only if $\ch^b(\proj A)/\thick(eA)$ is idempotent complete. If $\cd_{sg}(eAe)$ is idempotent complete, then $\thick_{\cd_{sg}(eAe)}(Ae)$ is idempotent complete and $G\circ \bar{F}^{-1}$ has to be an equivalence. Therefore the categories $\ch^b(\proj A)/\thick(eA)$ and $\per(B)/\cd_{fg}(B)$ are idempotent complete.

\smallskip
Now assume that the global dimension of $A$ is finite. Then $\cd^b(\mod A)=\ch^b(\proj A)$, and hence the functors $F$ and $\bar{F}$ have to coincide. As a consequence, $\thick_{\cd_{sg}(eAe)}(Ae)$ coincides with $\cd_{sg}(eAe)$ and we obtain the second desired equivalence. 
\end{proof}

Put $\Delta_e(A):=\ch^b(\proj A)/\thick(eA)$. The following result shows that up to direct summands $\thick_{\cd_{sg}(eAe)}(Ae)$ is a canonical triangle quotient of $\Delta_e(A)$. For a triangulated $k$-category $\ct$, we denote by $\ct_{rhb}$ the full subcategory of $\ct$ consisting of the objects $X$ such that for any $Y\in\ct$, the space $\Hom(Y,\Sigma^p X)$ vanishes for almost all $p\in\mathbb{Z}$. 

\begin{corollary}\label{cor:relative-singularity-category-vs-singularity-category}
Let $A$ be a flat $k$-algebra which is right noetherian as a ring and let $e\in A$ be an idempotent such that all finitely generated $A/AeA$-modules have finite global dimension over $A$. Let $B$ be obtained as in Proposition~\ref{p:recollement-from-projective-general-case}. 
Then there is a triangle equivalence up to direct summands
\begin{align}
 \thick_{\cd_{sg}(eAe)}(Ae) \longrightarrow \Delta_e(A)/\Delta_e(A)_{rhb}.
\end{align}
If $A$ has finite global dimension, then there is a triangle equivalence up to direct summands
\begin{align}
\cd_{sg}(eAe)\longrightarrow \Delta_e(A)/\Delta_e(A)_{rhb}.
\end{align}
\end{corollary}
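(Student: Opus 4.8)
The plan is to reduce the statement to Theorem~\ref{t:singularity-category-vs-relative-singularity-category} by identifying, up to direct summands, the Verdier quotient $\Delta_e(A)/\Delta_e(A)_{rhb}$ with $\per(B)/\cd_{fg}(B)$. By Corollary~\ref{c:restriction-and-induction}(a) the functor $i^*$ is a triangle equivalence up to direct summands $\Delta_e(A)\to\per(B)$; hence it suffices to show that under $i^*$ the (automatically thick) subcategory $\Delta_e(A)_{rhb}$ corresponds to $\cd_{fg}(B)$, and then to invoke the standard fact that a fully faithful triangle functor which is dense up to direct summands induces a triangle equivalence up to direct summands on the Verdier quotients by a matching pair of thick subcategories (this is the sort of idempotent-completion bookkeeping of \cite{Neeman92a,BalmerSchlichting01}).

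The technical core is the identity $\per(B)_{rhb}=\cd_{fg}(B)$, where $\per(B)_{rhb}$ is formed inside $\per(B)$. Since $B_B$ is a classical generator of $\per(B)=\thick_{\cd(B)}(B_B)$, a d\'evissage shows that an object $X\in\per(B)$ lies in $\per(B)_{rhb}$ exactly when $\Hom_{\cd(B)}(B,\Sigma^p X)=H^p(X)$ vanishes for $|p|\gg 0$, that is, when $H^*(X)$ is bounded. On the other hand, Proposition~\ref{p:finitely-generated-are-perfect}(a) gives $\cd_{fg}(B)\subseteq\per(B)$, and Proposition~\ref{p:finitely-generated-are-perfect}(b) together with another d\'evissage shows that $H^p(X)$ is finitely generated over $H^0(B)\cong A/AeA$ for every $X\in\per(B)$ and every $p$; since $H^0(B)$ sits in degree $0$, a finitely generated graded $H^0(B)$-module is concentrated in finitely many degrees. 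Hence for $X\in\per(B)$ the three conditions ``$H^*(X)$ finitely generated over $H^0(B)$'', ``$H^*(X)$ bounded'' and ``$X\in\per(B)_{rhb}$'' are equivalent, which is $\per(B)_{rhb}=\cd_{fg}(B)$.

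Next I would match the subcategories. As $i^*\colon\Delta_e(A)\to\per(B)$ is fully faithful it reflects the intrinsically defined property $rhb$, so $i^*$ maps $\Delta_e(A)_{rhb}$ into $\per(B)_{rhb}$; conversely, $\thick(q(\mod A/AeA))$ maps, up to direct summands by Corollary~\ref{c:restriction-and-induction}(c), onto $\cd_{fg}(B)=\per(B)_{rhb}$ and is itself contained in $\Delta_e(A)_{rhb}$. Thus $i^*$ restricts to a triangle equivalence up to direct summands $\Delta_e(A)_{rhb}\to\cd_{fg}(B)$, and passing to Verdier quotients gives a triangle equivalence up to direct summands $\Delta_e(A)/\Delta_e(A)_{rhb}\to\per(B)/\cd_{fg}(B)$. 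Composing with a quasi-inverse of the equivalence up to direct summands $\thick_{\cd_{sg}(eAe)}(Ae)\to\per(B)/\cd_{fg}(B)$ from Theorem~\ref{t:singularity-category-vs-relative-singularity-category} yields the first assertion; when $A$ has finite global dimension, that theorem also gives $\thick_{\cd_{sg}(eAe)}(Ae)=\cd_{sg}(eAe)$, whence the second.

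I expect the main obstacle to be the ``up to direct summands'' bookkeeping: checking that the fully faithful, summand-dense functor $i^*$ descends to a summand-dense equivalence between the Verdier quotients by the matching thick subcategories, and in particular pinning down $\per(B)_{rhb}$ as \emph{equal to} $\cd_{fg}(B)$ rather than merely containing it --- which is precisely where the non-positivity of $B$ and the right noetherianity of $H^0(B)$, via Proposition~\ref{p:finitely-generated-are-perfect}, are genuinely used.
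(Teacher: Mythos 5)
Your proposal is correct and follows essentially the same route as the paper: establish $\per(B)_{rhb}=\cd_{fg}(B)$ (a claim the paper asserts without elaboration and you justify via Proposition~\ref{p:finitely-generated-are-perfect} and d\'evissage from the generator $B$), restrict the summand-dense equivalence of Corollary~\ref{c:restriction-and-induction}(a) along the matching thick subcategories using part (c), pass to Verdier quotients, and apply Theorem~\ref{t:singularity-category-vs-relative-singularity-category}. The only slip is the phrase ``fully faithful, hence reflects $rhb$, so maps $\Delta_e(A)_{rhb}$ into $\per(B)_{rhb}$'': full faithfulness gives reflection, while preservation needs density up to direct summands (every object of $\per(B)$ is a summand of some $i^*Y$), which is available and is precisely the bookkeeping you flag at the end.
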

\begin{proof}
Since $\cd_{fg}(B)=\per(B)_{rhb}$, it follows that the triangle equivalence up to direct summands $\Delta_e(A)\to \per(B)$ in Corollary~\ref{c:restriction-and-induction}(a) restricts to a triangle equivalence up to direct summands $\Delta_e(A)_{rhb}\to \cd_{fg}(B)$. In view of Corollary~\ref{c:restriction-and-induction}(c), this is in fact a triangle equivalence. We are done by applying Theorem~\ref{t:singularity-category-vs-relative-singularity-category}.
\end{proof}

\subsection{Non-commutative resolutions}

Let $R$ be a commutative $k$-algebra which is noetherian as a ring.
A \emph{non-commutative resolution} (NCR for short) of $R$ is a $k$-algebra of the form $A=\End_R(R\oplus M)$, where $M$ is a finitely generated $R$-module, such that $A$ has finite global dimension. Let $e\in A$ be the idempotent of $A$ corresponding to the direct summand $R$ of $R\oplus M$.

\begin{corollary}\label{cor:dg-algebra-B-for-NCR}
Let $A$ be an NCR of $R$ and let $B=B_R$ be the dg algebra in Proposition~\ref{p:recollement-from-projective-general-case}. Then
\begin{itemize}
\item[(a)] $B^i=0$ for any $i>0$;
\item[(b)] $H^0(B)\cong A/AeA=\underline{\End}_R(M)$, the stable endomorphism algebra of $M$;
\item[(c)] $\cd_{sg}(R)=\thick_{\cd_{sg}(R)}(M)$;
\item[(d)] there is a triangle equivalence $\cd_{sg}(R)\simeq \per(B)/\cd_{fg}(B)$ up to direct summands, taking $M$ to $B$; if $\cd_{sg}(R)$ is idempotent complete (\eg if $R$ is local complete), then this is a triangle equivalence.
\end{itemize}
\end{corollary}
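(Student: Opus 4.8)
The plan is to deduce all four statements from Proposition~\ref{p:recollement-from-projective-general-case} and Theorem~\ref{t:singularity-category-vs-relative-singularity-category}, applied to $A=\End_R(R\oplus M)$ with $e$ the idempotent projecting onto the summand $R$, after recording a few elementary identifications. First I would note that $eAe=\End_R(R)=R$, so $\cd_{sg}(eAe)=\cd_{sg}(R)$ and $Ae=\Hom_R(R,R\oplus M)\cong R\oplus M$ as a module over $eAe=R$, and that, writing $A$ in matrix form as $\left(\begin{smallmatrix} R & \Hom_R(M,R)\\ M & \End_R(M)\end{smallmatrix}\right)$, the two-sided ideal $AeA$ is exactly the ideal of endomorphisms of $R\oplus M$ that factor through $\add R=\proj R$. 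A glance at the four corners --- any homomorphism into or out of $R$ factors through $R\in\add R$, so the three corners other than $\End_R(M)$ die in the quotient, while a homomorphism $M\to M$ factors through $\add R$ precisely when it lies in $\cp(M)$ --- gives a ring isomorphism $A/AeA\cong\End_R(M)/\cp(M)=\underline{\End}_R(M)$. Finally, since $R$ is commutative noetherian and $R\oplus M$ is finitely generated, $A$ is module-finite over $R$, hence right noetherian, and $\gldim A<\infty$ by the definition of an NCR; in particular every finitely generated $A/AeA$-module has finite projective dimension over $A$, so the hypotheses of Theorem~\ref{t:singularity-category-vs-relative-singularity-category} are in force --- the $k$-flatness of $A$ being inessential, in view of the remarks following Proposition~\ref{p:recollement-from-projective-general-case} and of Lemma~\ref{lem:B-in-homotopy-category-of-dg-algebras} (if $A$ is not $k$-flat, replace it by a $k$-flat cofibrant model).

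Let $B$ be the dg algebra supplied by Proposition~\ref{p:recollement-from-projective-general-case}. That proposition at once gives $B^i=0$ for $i>0$, which is (a), and $H^0(B)\cong A/AeA$, which together with the isomorphism of the previous paragraph is (b). For (c), the finite-global-dimension clause of Theorem~\ref{t:singularity-category-vs-relative-singularity-category} gives $\thick_{\cd_{sg}(R)}(Ae)=\cd_{sg}(eAe)=\cd_{sg}(R)$; since $Ae\cong R\oplus M$ with $R\in\per(R)$, hence $0$ in $\cd_{sg}(R)$, this reads $\cd_{sg}(R)=\thick_{\cd_{sg}(R)}(M)$.

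For (d), the same clause furnishes a triangle equivalence up to direct summands $\cd_{sg}(R)\to\per(B)/\cd_{fg}(B)$, and I would pin down the image of $M$ by tracing the functors used in the proof of Theorem~\ref{t:singularity-category-vs-relative-singularity-category}: that equivalence is the composite $G\circ\bar F^{-1}$, where $\bar F$ is induced by $\Hom_A(eA,?)$ and sends the class of $A_A$ to $Ae$, and $G$ is the restriction of the equivalence of Corollary~\ref{c:restriction-and-induction}(a), induced by $?\lten_A B$, which sends $A_A$ to $B$; hence $[M]=[Ae]\mapsto[A_A]\mapsto[B]$. The last assertion is exactly the idempotent-completeness part of Theorem~\ref{t:singularity-category-vs-relative-singularity-category}: the equivalence is genuine as soon as $\ch^b(\proj A)/\thick(eA)$ is idempotent complete, which is implied by idempotent completeness of $\cd_{sg}(R)$, and the latter holds, for instance, when $R$ is complete local.

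I expect the only real work to lie in the corner-by-corner identification $A/AeA\cong\underline{\End}_R(M)$ and in the bookkeeping that tracks $M$ through the chain of equivalences behind (d); beyond assembling results already established there is no essentially new ingredient, and the one point that needs a little care is the reduction to the $k$-flat case required to invoke Proposition~\ref{p:recollement-from-projective-general-case}.
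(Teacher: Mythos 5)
Your proposal is correct and follows the same route as the paper, which simply cites Proposition~\ref{p:recollement-from-projective-general-case} for (a)--(b) and Theorem~\ref{t:singularity-category-vs-relative-singularity-category} for (c)--(d); your extra details (the matrix-corner identification $A/AeA\cong\underline{\End}_R(M)$, the noetherian/finite-global-dimension hypotheses, the flatness reduction, and tracking $M\mapsto Ae\mapsto A_A\mapsto B$) are exactly the routine verifications the paper leaves implicit.
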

\begin{proof}
(a) and (b) follow from Proposition~\ref{p:recollement-from-projective-general-case}, and (c) and (d) follow from Theorem~\ref{t:singularity-category-vs-relative-singularity-category}.
\end{proof}

\subsection{Algebraic triangulated categories with a classical generator} \label{ss:alg-tria-cl-sing}
Let $\ce$ be an idempotent complete Frobenius $k$-category and assume that $\proj \ce= \add P$ for some $P \in \proj \ce$.  Let $R=\End_{\ce}(P)$. Let $M$ be a classical generator of $\underline{\ce}$ and assume that $M$, as an object of $\ce$, contains $P$ as a direct summand. Put $A=\End_\ce(M)$. Let $e$ be the identity endomorphism of $P$ and view it as an element of $A$. Let $B$ be the dg algebra obtained in Proposition~\ref{p:recollement-from-projective-general-case}.

\begin{corollary}
Keep the notation and assumptions in the preceding paragraph. Assume further that $A$ is right noetherian as a ring and that all finitely generated $A/AeA$-modules have finite projective dimension over $A$. Then
\begin{itemize}
\item[(a)] $B^i=0$ for any $i>0$;
\item[(b)] $H^0(B)\cong A/AeA={\End}_{\underline{\ce}}(M)$;
\item[(c)] there is a triangle equivalence $\underline{\ce}\simeq\thick_{\cd_{sg}(R)}(Ae)$ up to direct summands;
\item[(d)] there is a triangle equivalence $\underline{\ce}\simeq \per(B)/\cd_{fg}(B)$ up to direct summands, taking $M$ to $B$.
\end{itemize}
\end{corollary}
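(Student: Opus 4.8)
The plan is to deduce everything from the results already assembled in the excerpt, in particular Proposition~\ref{prop:Buchweitz} (which produces a fully faithful triangle functor $\ul{\widetilde{\P}}\colon\ul{\ce}\to\cd_{sg}(R)$ from the data $\proj\ce=\add P$, $R=\End_\ce(P)$), Corollary~\ref{c:restriction-and-induction} and Theorem~\ref{t:singularity-category-vs-relative-singularity-category}. First I would observe that parts (a) and (b) are essentially formal: since $\proj\ce=\add P$ and $M$ contains $P$ as a direct summand, the idempotent $e\in A=\End_\ce(M)$ corresponding to $P$ satisfies $eAe=\End_\ce(P)=R$, and $A/AeA$ is the quotient of $\End_\ce(M)$ by the ideal of morphisms factoring through $\add P=\proj\ce$, which is exactly $\End_{\ul{\ce}}(M)$. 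So (a) is the statement $B^i=0$ for $i>0$ from Proposition~\ref{p:recollement-from-projective-general-case}, and (b) is $H^0(B)\cong A/AeA=\End_{\ul{\ce}}(M)$ from the same proposition together with this identification.

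For (c) and (d) the strategy is to feed the present setup into Theorem~\ref{t:singularity-category-vs-relative-singularity-category}, whose hypotheses (flatness of $A$ over $k$, right noetherianness, finite projective dimension of finitely generated $A/AeA$-modules) are either assumed here or automatic. That theorem gives a triangle equivalence up to direct summands
\[
\thick_{\cd_{sg}(R)}(Ae)\longrightarrow\per(B)/\cd_{fg}(B),
\]
so (d) will follow once (c) is established, since $Ae$ corresponds to $B$ under $i^*$ (Corollary~\ref{c:restriction-and-induction}(a)) and $M$ as an object of $\ce$ maps to $Ae$ under $\Hom_\ce(P,?)$-type identifications — I should be careful here and track exactly which object of $\cd_{sg}(R)$ the generator $M$ goes to under $\ul{\widetilde{\P}}$, matching it with $Ae\in\ch^b(\proj R)/\thick\ldots$ via the functor in the proof of Proposition~\ref{prop:Buchweitz}. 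The heart of (c) is thus to identify the essential image of $\ul{\widetilde{\P}}$ with $\thick_{\cd_{sg}(R)}(Ae)$: since $\ul{\widetilde{\P}}$ is fully faithful and $M$ is a classical generator of $\ul{\ce}$, the image is $\thick_{\cd_{sg}(R)}(\ul{\widetilde{\P}}(M))$, and the remaining task is to show $\ul{\widetilde{\P}}(M)\cong Ae$ in $\cd_{sg}(R)$ (or generates the same thick subcategory). Concretely, $Ae=\Hom_\ce(P,M)$ viewed as a right $R$-module, and $\ul{\widetilde{\P}}(M)$ is computed by taking a projective resolution of $M$ in $\ce$, applying $\Hom_\ce(P,?)$, and passing to $\cd_{sg}(R)$; since $\Hom_\ce(P,?)$ is exact on conflations with projective outer terms and sends $\proj\ce$ to $\proj R$, this resolution maps to a complex quasi-isomorphic to the $R$-module $\Hom_\ce(P,M)=Ae$, so indeed $\ul{\widetilde{\P}}(M)\cong Ae$ in $\cd_{sg}(R)$, giving $\thick_{\cd_{sg}(R)}(Ae)=\Im(\ul{\widetilde{\P}})\simeq\ul{\ce}$.

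The main obstacle I anticipate is the bookkeeping in the last identification: one must be sure that $\Hom_\ce(P,?)$ really does carry an $\ce$-projective resolution of $M$ to something computing $Ae$ in $\cd_{sg}(R)$ rather than in some larger category, i.e. that the subtlety flagged in the Remark after Proposition~\ref{prop:Buchweitz} (the restriction $\ch^{-,b}(\proj\ce)\to\ch^{-,b}(\proj R)$ need not be dense, and $\ul{\widetilde{\P}}$ need not be dense) does not interfere with the statement \emph{up to direct summands}. This is handled precisely by keeping the phrase ``up to direct summands'' throughout and only upgrading to a genuine equivalence when $\ch^b(\proj A)/\thick(eA)$ (equivalently $\cd_{sg}(R)$, via Proposition~\ref{prop:Buchweitz} when it applies) is idempotent complete. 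So the proof will read: (a), (b) from Proposition~\ref{p:recollement-from-projective-general-case}; (c) by combining Proposition~\ref{prop:Buchweitz}, the generation hypothesis on $M$, and the identification $\ul{\widetilde{\P}}(M)\cong Ae$; (d) from Theorem~\ref{t:singularity-category-vs-relative-singularity-category} together with (c).
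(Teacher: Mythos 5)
Your proposal is correct and follows essentially the same route as the paper, whose proof simply cites Proposition~\ref{p:recollement-from-projective-general-case} for (a) and (b) and Theorem~\ref{t:singularity-category-vs-relative-singularity-category} for (c) and (d), ``because the fully faithful functor $\ul{\widetilde{\P}}$ of Proposition~\ref{prop:Buchweitz} takes $M$ to $Ae$''. The details you supply --- the identification $A/AeA=\End_{\underline{\ce}}(M)$, the computation $\ul{\widetilde{\P}}(M)\cong Ae$ via exactness of $\Hom_\ce(P,?)$ on conflations, and the care with ``up to direct summands'' --- are exactly the steps the paper leaves implicit.
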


\begin{proof}
(a) and (b) follow from Proposition~\ref{p:recollement-from-projective-general-case}. (c) and (d) follow from Theorem~\ref{t:singularity-category-vs-relative-singularity-category}, because the fully faithful functor $\ul{\widetilde{\P}}\colon  \underline{\ce}\to\cd_{sg}(R)$ in Proposition~\ref{prop:Buchweitz} takes $M$ to $Ae$.
\end{proof}

\section{Construction of dg model}
\label{s:dg-model}
\label{ss:construction-of-B}
Let $A$ be a flat $k$-algebra and $e\in A$ an idempotent. In Section~\ref{s:relative-singularity-category}, we showed that up to direct summands $\ch^b(\proj A)/\thick(eA)$ is triangle equivalent to $\per(B)$ for a nice dg algebra $B$. We remark that $B$ is not unique but unique up to a nice equivalence (Lemma~\ref{lem:B-in-homotopy-category-of-dg-algebras}). In this section, we discuss various constructions of $B$. We first recall two constructions of $B$ from general theories, then we give a new construction, which is for special cases but will be very useful in practice.

\subsection{Nicol\'as--Saor\'in's construction}
Following the proof of \cite[Theorem 4]{NicolasSaorin09}, we consider the triangle
\begin{align}
\xymatrix@R=0.9pc{
j_!j^*(A)\ar@{=}[d]\ar[r] & A\ar[r] & i_*i^*(A)\ar[r] & \Sigma A.\\
Ae\lten_{eAe}eA
}
\end{align}
Let $Y=i_*i^*(A)$. We may assume that $Y$ is $\ch$-injective over $A$. Let $C=\cEnd_A(Y)$. Then $Y$ becomes a dg $C$-$A$-bimodule, in other words, a dg $C^{op}\ten A$-module. Let $Y\rightarrow Y'$ be an $\ch$-injective resolution of $Y$ as a dg $C^{op}\ten A$-module and let $B'=\cEnd_{C^{op}}(Y')^{op}$. Then the dg $C^{op}\ten A$-module structure on $Y'$ yields a homomorphism of dg algebras $f'=A\rightarrow B'$. Then $B=\sigma^{\leq 0}B'$ and $f=\sigma^{\leq 0}f'$ satisfy the desired properties in Proposition~\ref{p:recollement-from-projective-general-case}. We point out that $Y'$ is a quasi-equivalence from $C$ to $B$.

\subsection{Drinfeld's construction}

Define a dg $k$-category $\ca$ with two objects $\epsilon$ and $\gamma$: the morphism spaces are 
\begin{align*}
\Hom_\ca(\epsilon,\epsilon)&=eAe,\\
\Hom_\ca(\epsilon,\gamma)&=(1-e)Ae,\\ 
\Hom_\ca(\gamma,\epsilon)&=eA(1-e),\\ 
\Hom_\ca(\gamma,\gamma)&=(1-e)A(1-e),
\end{align*} 
and the composition of morphisms is induced from the multiplication of $A$.

Let $\ca'$ be the dg subcategory of $\ca$ consisting of the object $\epsilon$. Following \cite[Section 3.1]{Drinfeld04}, we form the dg quotient $\ca/\ca'$ by formally adjoining a morphism $x\colon \epsilon\to\epsilon$ of degree $-1$ such that $d(x)=\id_\epsilon=e$. Then  by \cite[Theorem 3.4]{Drinfeld04} the dg algebra $B=\End_{\ca/\ca'}(\gamma)$ and the inclusion $f\colon A\to B$ satisfy the desired properties in Proposition~\ref{p:recollement-from-projective-general-case}. As a complex, $B$ has the form
\begin{align*}
\xymatrix{
\ldots\ar[r] & Ae\ten (eAe)^{\ten p}\ten eA\ar[r]^(0.72){d^{-p-1}} & \ldots\ar[r] & Ae\ten eAe\ten eA\ar[r]^(0.58){d^{-2}} & Ae\ten eA\ar[r]^(0.62){d^{-1}} & A
},
\end{align*}
where the rightmost term is in degree $0$ and the term $Ae\ten (eAe)^{\ten p}\ten eA$ is in degree $-p-1$. The differentials are given by
\begin{align*}
d^{-p-1}(a_0\ten a_1\ten\cdots \ten a_p\ten a_{p+1})=\sum_{i=0}^{p+1}(-1)^i a_0\ten\cdots\ten a_ia_{i+1}\ten\cdots\ten a_{p+1}.
\end{align*}
The multiplication of $B$ is induced from that of $A$:
\begin{align*}
(a_0\ten \cdots\ten a_{p+1})(b_0\ten\cdots\ten b_{q+1})=a_0\ten\cdots \ten a_p\ten a_{p+1}b_0\ten b_1\ten\cdots\ten b_{q+1}.
\end{align*}

\subsection{Special case: $A$ is quasi-isomorphic to a dg quiver algebra}
\label{ss:construction-of-B-case:non-complete-cofibrant-model}
Assume that $k$ is a field. 

\begin{theorem}\label{t:construction-with-noncomplete-cofibrant-model} Keep the notation and assumptions in Proposition~\ref{p:recollement-from-projective-general-case}. If there is a quasi-isomorphism $\rho \colon  \tilde{A}\rightarrow A$ of dg algebras, where $\tilde{A}=(kQ,d)$ is a dg quiver algebra, such that $e$ is the image under $\rho$
of a sum $\tilde{e}$ of some trivial paths of $Q$, then $B$ is quasi-equivalent to $\tilde{A}/\tilde{A}\tilde{e}\tilde{A}$.
\end{theorem}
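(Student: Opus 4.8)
The plan is to realise $\tilde A/\tilde A\tilde e\tilde A$ as a homological-epimorphic image of $\tilde A$ with the correct perpendicular subcategory and then feed this into Lemma~\ref{lem:B-in-homotopy-category-of-dg-algebras}. Write $\tilde e=\sum_{i\in I}e_i$ for the relevant set $I$ of vertices. First I would record that $\rho^*(eA)\cong\tilde e\tilde A$ in $\cd(\tilde A)$: since $\rho(\tilde e)=e$ and $\rho$ is a quasi-isomorphism, $\rho$ respects the right-module decompositions $\tilde A=\tilde e\tilde A\oplus(1-\tilde e)\tilde A$ and $A=eA\oplus(1-e)A$, so its restriction $\tilde e\tilde A\to eA$ is a quasi-isomorphism of right dg $\tilde A$-modules. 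Since $\tilde e\tilde A$ is a direct summand of $\tilde A$, hence $\ch$-projective, we get $\Loc_{\cd(\tilde A)}(\rho^*(eA))^\perp=\Loc_{\cd(\tilde A)}(\tilde e\tilde A)^\perp=\{M\in\cd(\tilde A)\mid H^*(Me_i)=0 \text{ for all } i\in I\}$.

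Next I would introduce the dg quiver algebra $\hat{A}=(k\hat{Q},\hat{d})$, where $\hat{Q}$ is obtained from $Q$ by adjoining a loop $x_i$ of degree $-1$ at each vertex $i\in I$, where $\hat{d}$ agrees with $d$ on $kQ$, and where $\hat{d}(x_i)=e_i$. This $\hat{A}$ is exactly Drinfeld's dg quotient of the dg category attached to $\tilde A$ by its full dg subcategory on the vertices in $I$; by \cite{Drinfeld04} (in the spirit of the construction recalled in Section~\ref{ss:construction-of-B}) the inclusion $g\colon\tilde A\hookrightarrow\hat{A}$ is a homological epimorphism of dg algebras whose essential image $\Im(g^*)$ consists of those dg $\tilde A$-modules that are acyclic on the vertices in $I$, i.e.\ $\Im(g^*)=\Loc_{\cd(\tilde A)}(\tilde e\tilde A)^\perp$. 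On the other hand, for each $i\in I$ we have $x_i\in e_i(k\hat{Q})e_i$ with $\hat{d}(x_i)=e_i$, and the same remains true after deleting the other vertices of $I$; thus, applying Lemma~\ref{lem:deleting-a-contractible-vertex} once for each vertex of $I$, the quotient morphism $h\colon\hat{A}\to\hat{A}/\hat{A}\tilde e\hat{A}$ is a composition of quasi-isomorphisms, hence a quasi-isomorphism. Moreover, by Section~\ref{ss:deleting-a-vertex}, deleting the vertices in $I$ from $\hat{Q}$ also removes the loops $x_i$, so $\hat{A}/\hat{A}\tilde e\hat{A}$ is identified with the dg quiver algebra $(kQ',d')=\tilde A/\tilde A\tilde e\tilde A$.

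It then remains to combine these observations. The composite $f':=h\circ g\colon\tilde A\to\tilde A/\tilde A\tilde e\tilde A$ is the canonical quotient morphism, and $f'^*=g^*\circ h^*$ is a fully faithful functor composed with a triangle equivalence; hence $f'$ is a homological epimorphism and $\Im(f'^*)=\Im(g^*)=\Loc_{\cd(\tilde A)}(\tilde e\tilde A)^\perp=\Loc_{\cd(\tilde A)}(\rho^*(eA))^\perp$. Now Lemma~\ref{lem:B-in-homotopy-category-of-dg-algebras}, applied with $A'=\tilde A$, $s'=\rho$, $f'$ as above and $B'=\tilde A/\tilde A\tilde e\tilde A$, shows (recall $k$ is a field) that $B$ is quasi-equivalent to $\tilde A/\tilde A\tilde e\tilde A$.

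The step carrying the real weight is the computation of $\Im(g^*)$: one must check carefully that $\hat{A}$ as defined really is the Drinfeld dg quotient of the dg category of $\tilde A$ by the full subcategory on the vertices in $I$, and that Drinfeld's ``acyclic on those vertices'' description of the image of $g^*$ coincides with $\Loc_{\cd(\tilde A)}(\tilde e\tilde A)^\perp$. The remaining points --- that deleting the vertices in $I$ from $\hat{Q}$ one at a time stays within the hypotheses of Lemma~\ref{lem:deleting-a-contractible-vertex} and returns precisely $\tilde A/\tilde A\tilde e\tilde A$, and the bookkeeping with $\rho$ --- are routine.
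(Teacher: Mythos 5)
Your argument is correct, but it routes around the paper's key lemma in a genuinely different way. The paper deduces Theorem~\ref{t:construction-with-noncomplete-cofibrant-model} from Lemma~\ref{lem:B-in-homotopy-category-of-dg-algebras} together with Lemma~\ref{l:recollement-from-projective-free-case}, which proves \emph{directly}, for a dg quiver algebra and an idempotent that is a sum of trivial paths, that the projection $\tilde A\to\tilde A/\tilde A\tilde e\tilde A$ sits in a recollement; the proof there is an elementary computation resting on the identity $\tilde A\tilde e\tilde A\cong \tilde A\tilde e\ten_{\tilde e\tilde A\tilde e}\tilde e\tilde A$ (available because $\tilde e$ is a sum of trivial paths of a path algebra) and on the criteria of Nicol\'as--Saor\'in, and it yields the full recollement including the $\cd(\tilde e\tilde A\tilde e)$ side. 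You instead establish the needed input to Lemma~\ref{lem:B-in-homotopy-category-of-dg-algebras} --- that the canonical projection is a homological epimorphism with $\Im(f'^*)=\Loc(\tilde e\tilde A)^\perp$ --- by passing through the Drinfeld dg quotient $\hat A$ obtained by adjoining contracting loops at the vertices of $I$, and then contracting those vertices via Lemma~\ref{lem:deleting-a-contractible-vertex} (a lemma the paper proves but deploys elsewhere). Your bookkeeping is sound: $\rho^*(eA)\cong\tilde e\tilde A$, the iterated vertex deletion stays within the hypotheses of Lemma~\ref{lem:deleting-a-contractible-vertex} because the surviving loops $x_j$ and their differentials $e_j$ do not pass through the deleted vertices, and $h\circ g$ is indeed the canonical projection. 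The one point you rightly flag as carrying weight deserves a precise citation: the statement that restriction along $\tilde A\to\hat A$ is fully faithful with image $\{M\mid H^*(Me_i)=0,\ i\in I\}$ is not literally \cite[Theorem 3.4]{Drinfeld04} (which identifies $\cd(\hat A)$ with the Verdier quotient $\cd(\tilde A)/\Loc(e_i\tilde A\mid i\in I)$ via induction); one gets the image description because the $e_i\tilde A$ are compact, so $(\Loc(e_i\tilde A\mid i\in I),\Loc(e_i\tilde A\mid i\in I)^\perp)$ is a Bousfield localisation and the right adjoint of the quotient functor is fully faithful onto the right orthogonal (alternatively this is covered by \cite{NicolasSaorin09}). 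With that reference supplied, your proof is complete; what it buys is a more conceptual argument that avoids the bimodule identity and the explicit recollement of Lemma~\ref{l:recollement-from-projective-free-case}, at the cost of outsourcing the key step to the general theory of dg quotients, whereas the paper's route is self-contained and produces the recollement itself as a by-product.
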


We need an auxiliary result. Let $A=(kQ,d)$ be a dg quiver algebra. 
 Let $Q_0'$ be a subset of $Q_0$ and let $e$ be the sum of the trivial paths at $i\in Q_0'$. The following lemma
 generalises~\cite[Lemma 7.2]{Keller11}.

\begin{lemma}\label{l:recollement-from-projective-free-case} 
The following diagram is a recollement
\begin{align}
\begin{xy}
\SelectTips{cm}{}
\xymatrix{\cd(A/AeA)\ar[rr]|(.55){\, \, i_*=i_! \, \,}&&\cd(A)\ar[rr]|{\, \, j^!=j^* \, \,} \ar@/^15pt/[]!<0ex, 0ex>;[ll]!<3ex, 0ex>|{\, \, i^! \, \, }\ar@/_15pt/[]!<0ex, 0ex>;[ll]!<3ex, 0ex>|{\, \, i^*\, \,}&&\cd(eAe)\ar@/^15pt/[ll]|{\, \, j_* \, \,}\ar@/_15pt/[ll]|{\, \, j_! \, \,}}
\end{xy},
\end{align}
where the respective triangle functors are explicitely given as follows
\[\begin{array}{ll}
i^*=?\lten_A A/AeA, & j_!=?\lten_{eAe} eA,\\
i_*=\RHom_{A/AeA}(A/AeA,?), &
j^!=\RHom_{A}(eA,?),\\
i_!=?\lten_{A/AeA}A/AeA, & j^*=?\lten_A Ae,\\
i^!=\RHom_A(A/AeA,?),& j_*=\RHom_{eAe}(Ae,?).
\end{array}\]
\end{lemma}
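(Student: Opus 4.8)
The plan is to reduce this to the abstract recollement statement already recorded as item (a) in Section~\ref{ss:recollement}, applied to the surjective dg algebra homomorphism $f\colon A\to A/AeA$. First I would observe that, since $A=(kQ,d)$ is a dg quiver algebra and $e$ is a sum of trivial paths, the two-sided ideal $AeA$ is a dg ideal with $k$-basis the paths passing through a vertex of $Q_0'$; hence $A/AeA$ is again a dg quiver algebra and the quotient map $f$ is a genuine dg algebra homomorphism, not merely a homological one \emph{a priori}. The functors $i_*=f_\star=\RHom_{A/AeA}(A/AeA,?)$, $i_!=f_*=?\lten_{A/AeA}A/AeA$, $i^*=?\lten_A A/AeA$, $i^!=\RHom_A(A/AeA,?)$ then form the adjoint triple $(i^*,i_*,i^!)$ with $i_*$ fully faithful, and by Section~\ref{ss:recollement}(a) it suffices to check that $f$ is a homological epimorphism, i.e.\ that $i^*=f^*$ is fully faithful, equivalently $A/AeA\lten_{A/AeA}A/AeA\cong A/AeA$, which is automatic because $A/AeA$ is already an $A/AeA$-module; the real content is that the canonical map $A/AeA\lten_A A/AeA\to A/AeA$ is an isomorphism in $\cd(A/AeA)$, equivalently $A/AeA$ is \emph{left} $\ch$-flat after restriction, or that $\Tor^A_i(A/AeA,A/AeA)=0$ for $i\neq 0$.

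The key computational step, then, is to produce a convenient resolution of $A/AeA$ as an $A$-module (or to exhibit directly a contracting homotopy). The natural thing is to use the short exact sequence of dg $A$-$A$-bimodules $0\to AeA\to A\to A/AeA\to 0$ together with the fact that, because $e$ is idempotent and is a sum of trivial paths, $AeA$ is a direct summand of $Ae\ten_{eAe}eA$ up to homotopy; more precisely one has the bar-type resolution from Drinfeld's construction recalled just above, giving $AeA\simeq Ae\lten_{eAe}eA$ in $\cd(A\text{-}A)$, so that the triangle
\[
Ae\lten_{eAe}eA\longrightarrow A\longrightarrow A/AeA\longrightarrow \Sigma(Ae\lten_{eAe}eA)
\]
is exactly the triangle $j_!j^!(A)\to A\to i_*i^*(A)\to \Sigma j_!j^!(A)$. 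I would use this to identify $j_!=?\lten_{eAe}eA$ and $j^*=?\lten_A Ae$, and the adjoint $j_*=\RHom_{eAe}(Ae,?)$, $j^!=\RHom_A(eA,?)$, and to verify condition (3) of a recollement, $i^!j_*=0$, which amounts to $\RHom_A(A/AeA,\RHom_{eAe}(Ae,N))\cong\RHom_{eAe}(A/AeA\lten_A Ae,N)=0$ since $(A/AeA)e=0$ as $e$ consists of trivial paths and every path ending at a deleted vertex is killed in the quotient.

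The last step is to assemble the two triangles in condition (4). One triangle is the one displayed above; the other, $i_!i^!(X)\to X\to j_*j^*(X)\to$, is the standard one attached to the homological epimorphism $f$ via Section~\ref{ss:recollement}(a), once we know $f$ is a homological epimorphism. I expect the main obstacle to be precisely the homological-epimorphism verification: showing $\Tor^A_{>0}(A/AeA,A/AeA)=0$. The cleanest route is the explicit contracting homotopy idea from Lemma~\ref{lem:deleting-a-contractible-vertex}, but note that here we do \emph{not} assume a contracting element $x$ with $d(x)=e_i$; instead the vanishing is formal: the ideal $AeA$ has a $k$-basis by paths through $Q_0'$, and $A/AeA$ has a $k$-basis by paths avoiding $Q_0'$, so restriction of $A/AeA$ along $f$ is a projective $A/AeA$-module direct summand situation only up to the bimodule bar resolution — the honest argument is that $Ae\ten_{eAe}eA\to AeA$ is a quasi-isomorphism of $A$-$A$-bimodules (this is \cite[Lemma 7.2]{Keller11} in the ungraded case and follows from the bar construction in general), whence applying $?\ten_A A/AeA$ and using $(A/AeA)e=0$ gives $AeA\lten_A A/AeA\simeq (Ae\ten_{eAe}eA)\lten_A A/AeA\simeq Ae\lten_{eAe}(eA\lten_A A/AeA)\simeq Ae\lten_{eAe}(e(A/AeA))=0$, so the triangle $AeA\to A\to A/AeA$ tensored with $A/AeA$ shows $A/AeA\lten_A A/AeA\simeq A/AeA$, as required. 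Everything else — adjunctions, full faithfulness of $i_*$, $j_*$, $j_!$ — is then formal from Section~\ref{ss:recollement}.
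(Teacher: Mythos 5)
Your proposal is correct in substance, and its computational core is the same as the paper's: everything turns on the bimodule isomorphism $AeA\cong Ae\ten_{eAe}eA$, the vanishing $e(A/AeA)=0=(A/AeA)e$, and the triangle $AeA\to A\to A/AeA\to\Sigma AeA$. In particular your key step $AeA\lten_A A/AeA\simeq Ae\lten_{eAe}\bigl(e(A/AeA)\bigr)=0$ is literally the computation the paper performs to verify condition (4) of \cite[Lemma 4]{NicolasSaorin09}, i.e.\ that restriction along $f\colon A\to A/AeA$ is fully faithful. The packaging differs: rather than gluing the two halves of Section~\ref{ss:recollement} and checking the recollement axioms by hand, the paper identifies the TTF triple generated by the compact object $eA$, proving $\Loc(eA)^\perp=\Loc(A/AeA)$ (the nontrivial inclusion is obtained by applying $M\lten_A ?$ to the very same triangle) and then invoking the correspondence between TTF triples and recollements \cite[Theorem 5]{NicolasSaorin09}, which delivers all axioms at once. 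Your gluing route works, but one step is less ``formal'' than you suggest: the triangle $i_!i^!(X)\to X\to j_*j^*(X)\to\Sigma i_!i^!(X)$ is not supplied by the left half of Section~\ref{ss:recollement}(a) alone, since it involves $j_*j^*$; from the colocalization triangle $i_!i^!(X)\to X\to C_X\to\Sigma i_!i^!(X)$ one must still identify $C_X\simeq j_*j^*(X)$, which amounts to $\Ker(j^*)=\Im(i_*)$ --- exactly the paper's inclusion $\Loc(eA)^\perp\subseteq\Loc(A/AeA)$. You already have the ingredients to close this (apply your triangle $j_!j^!(X)\to X\to i_*i^*(X)$ to an $X$ with $Xe$ acyclic, using $j^!=j^*=?\,e$, to get $X\in\Im(i_*)$, and use $j^*i_*=0$ coming from $f(e)=0$), but it needs to be said explicitly. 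Two minor points: your identifications $i_*=f_\star$ and $i_!=f_*$ clash with the paper's notation in Section~\ref{ss:recollement}(a), where restriction is denoted $f^*$ (the formulas you actually write for $i_*$ and $i_!$ are the correct ones), and the phrase ``$A/AeA\lten_{A/AeA}A/AeA\cong A/AeA$ is automatic'' misstates the homological-epimorphism criterion before you correct it to $A/AeA\lten_A A/AeA\cong A/AeA$; finally, like the paper, you pass from the underived to the derived tensor product over $eAe$, and the justification (via \cite[Lemma 7.2]{Keller11} or the explicit structure of $Ae$ as a graded $eAe$-module given by paths leaving the chosen vertices) is precisely where the dg quiver hypothesis enters, so citing Keller there, as you do, is the right move.
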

\begin{proof} We will show that $\Loc(eA)^\perp=\Loc(A/AeA)$ and $i_*$ is fully faithful, which imply that  the TTF triple $(\Tria(eA),\Tria(eA)^\perp, (\Tria(eA)^\perp)^\perp)$ corresponds to the desired recollement under the correspondences in~\cite[Theorem 5]{NicolasSaorin09}.

For the inclusion $\Loc(A/AeA)\subseteq \Loc(eA)^\perp$, it suffices to show $A/AeA\in\Loc(eA)^\perp$, which follows from
\[
\Hom_{\cd(A)}(eA,\Sigma^p A/AeA)=H^p\cHom_A(eA,A/AeA)=H^p((A/AeA)e)=0.
\]
For the inclusion $\Loc(eA)^\perp\subseteq \Loc(A/AeA)$, consider the triangle
\[
\xymatrix{
AeA\ar[r] & A\ar[r] & A/AeA\ar[r] &\Sigma AeA.
}
\]
This yields a triangle for any $M\in\Loc(eA)^\perp$
\[
\xymatrix{
M\lten_A AeA\ar[r] & M\ar[r] & M\lten_A A/AeA \ar[r] & \Sigma M\lten_A AeA.
}
\]
Since $AeA=Ae\ten_{eAe}eA$, it follows that $M\lten_A AeA\cong M\ten_A Ae\ten_{eAe} eA$ belongs to $\Loc(eA)$. Therefore the middle morphism in the above triangle must be an isomorphism, showing that $M$ belongs to $\Loc(A/AeA)$.

Finally,  $AeA\lten_A A/AeA=Ae\ten_{eAe} eA\ten_A A/AeA=0$, so the condition (4) in \cite[Lemma 4]{NicolasSaorin09} is satisfied, implying that $i_*$ is fully faithful. 
\end{proof}

Now Theorem~\ref{t:construction-with-noncomplete-cofibrant-model} is an immediate consequence of Lemmas~\ref{lem:B-in-homotopy-category-of-dg-algebras} and~\ref{l:recollement-from-projective-free-case}.

\subsection{An example}

We provide an example to illustrate some results of this section and Section~\ref{s:relative-singularity-category}.

\begin{example}
Let $A$ be as in Example~\ref{ex:s(2,2)} and take $e=e_1$, the trivial path at the vertex $1$. Note that $A$ is 5-dimensional over the field $k$. Applying Theorem~\ref{t:construction-with-noncomplete-cofibrant-model}, we obtain that there is a recollement of the form
\begin{align*}
\begin{xy}
\SelectTips{cm}{}
\xymatrix{\cd(k[t])\ar[rr]|(.55){\, \, i_*=i_! \, \,}&&\cd(A)\ar[rr]|{\, \, j^!=j^* \, \,} \ar@/^15pt/[]!<0ex, 0ex>;[ll]!<3ex, 0ex>|{\, \, i^! \, \, }\ar@/_15pt/[]!<0ex, 0ex>;[ll]!<3ex, 0ex>|{\, \, i^*\, \,}&&\cd(k[x]/x^2)\ar@/^15pt/[ll]|{\, \, j_* \, \,}\ar@/_15pt/[ll]|{\, \, j_! \, \,}}
\end{xy},
\end{align*}
where $\deg(x)=0$, $\deg(t)=-1$ and $k[t]$ is a dg algebra with trivial differential. Since $\cd_{sg}(k[x]/x^2)\cong \underline{\mod}k[x]/x^2$ is idempotent complete, it follows from Theorem~\ref{t:singularity-category-vs-relative-singularity-category} that $\ch^b(\proj A)/\thick(eA)$ is idempotent complete. Therefore, by Corollary~\ref{c:restriction-and-induction}, there is a triangle equivalence
\begin{align*}
\ch^b(\proj A)/\thick(eA)\stackrel{\simeq}{\longrightarrow}\per(k[t]).
\end{align*}
The Auslander--Reiten quiver of $\ch^b(\proj A)$ is
\[
{\scriptsize
\begin{xy} 0;<0.35pt,0pt>:<0pt,-0.35pt>::
(0,50) *+{} ="0",  (0,450) *+{}="1", 
(50,0) *+{\Sigma^{-1}P_1} ="2", (50,100) *+{} ="3", (50,400) *+{} ="4", (50,500) *+{S_2} ="5", 
(100,50) *+{\circ} ="6", (100,150) *+{} ="7", (100,350) *+{} ="8", (100,450) *+{\circ} ="9",
(150,0) *+{P_1} ="10", (150,100) *+{\circ} ="11", (150,200) *+{} ="12", (150,300) *+{} ="13", (150,400) *+{\circ} ="14", (150,500) *+{\Sigma^{-1}S_2} ="15",
(200,50) *+{\circ} ="16", (200,150) *+{\circ} ="17",  (200,350) *+{\circ} ="18", (200,450) *+{\circ} ="19",
(250,0) *+{\Sigma P_1} ="20", (250,100) *+{\circ} ="21", (250,200) *+{*} ="22", (250,300) *+{*} ="23", (250,400) *+{\circ} ="24", (250,500) *+{\circ} ="25",
(300,50) *+{\circ} ="26", (300,150) *+{*} ="27",  (300, 250) *+{\circ} ="28", (300,350) *+{*} ="29", (300,450) *+{\circ} ="30",
(350,0) *+{\circ} ="31", (350,100) *+{*} ="32", (350,200) *+{I_2} ="33", (350,300) *+{\circ} ="34", (350,400) *+{*} ="35", (350,500) *+{\circ} ="36", 
(400,50) *+{*} ="37", (400,150) *+{\circ} ="38",  (400, 250) *+{S_1} ="39", (400,350) *+{\circ} ="40", (400,450) *+{*} ="41",
(450,100) *+{\circ} ="42", (450,200) *+{\Sigma P_2} ="43", (450,300) *+{P_2} ="44", (450,400) *+{\circ} ="45", 
(500,50) *+{*} ="46", (500,150) *+{\circ} ="47",  (500, 250) *+{\circ} ="48", (500,350) *+{\circ} ="49", (500,450) *+{*} ="50",
(550,0) *+{\circ} ="51", (550,100) *+{*} ="52", (550,200) *+{\circ} ="53", (550,300) *+{\circ} ="54", (550,400) *+{*} ="55", (550,500) *+{\Sigma^{-1}P_1} ="56", 
(600,50) *+{\circ} ="57", (600,150) *+{*} ="58",  (600, 250) *+{\circ} ="59", (600,350) *+{*} ="60", (600,450) *+{\circ} ="61",
(650,0) *+{\Sigma S_2} ="62", (650,100) *+{\circ} ="63", (650,200) *+{*} ="64", (650,300) *+{*} ="65", (650,400) *+{\circ} ="66", (650,500) *+{P_1} ="67",
(700,50) *+{\circ} ="68", (700,150) *+{\circ} ="69",  (700,350) *+{\circ} ="70", (700,450) *+{\circ} ="71",
(750,0) *+{S_2} ="72", (750,100) *+{\circ} ="73", (750,200) *+{} ="74", (750,300) *+{} ="75", (750,400) *+{\circ} ="76", (750,500) *+{\Sigma P_1} ="77",
(800,50) *+{\circ} ="78", (800,150) *+{} ="79", (800,350) *+{} ="80", (800,450) *+{\circ} ="81",
(850,0) *+{\Sigma^{-1}S_2} ="82", (850,100) *+{} ="83", (850,400) *+{} ="84", (850,500) *+{\circ} ="85", 
(900,50) *+{} ="86",  (900,450) *+{}="87", 
"0", {\ar@{.}"2"}, 
"1", {\ar@{.}"5"}, 
"2", {\ar"6"}, 
"3", {\ar@{.}"6"},
"4", {\ar@{.}"9"}, 
"5", {\ar"9"},  
"6", {\ar"10"}, "6", {\ar"11"},
"7", {\ar@{.}"11"}, 
"8", {\ar@{.}"14"},
"9", {\ar"14"}, "9", {\ar"15"},
"10", {\ar"16"}, 
"11", {\ar"16"}, "11", {\ar"17"}, 
"12", {\ar@{.}"17"}, 
"13", {\ar@{.}"18"},
"14", {\ar"18"}, "14", {\ar"19"}, 
"15", {\ar"19"},
"16", {\ar"20"}, "16", {\ar"21"},
"17", {\ar"21"}, "17", {\ar@{.}"22"},
"18", {\ar@{.}"23"}, "18", {\ar"24"}, 
"19", {\ar"24"}, "19", {\ar"25"},
"20", {\ar"26"},
"21", {\ar"26"}, "21", {\ar@{.}"27"},
"22", {\ar@{.}"28"},
"23", {\ar@{.}"28"},
"24", {\ar@{.}"29"}, "24", {\ar"30"},
"25", {\ar"30"},
"26", {\ar"31"}, "26", {\ar@{.}"32"},
"27", {\ar@{.}"33"},
"28", {\ar"33"}, "28", {\ar"34"},
"29", {\ar@{.}"34"},
"30", {\ar@{.}"35"}, "30", {\ar"36"},
"31", {\ar@{.}"37"}, 
"32", {\ar@{.}"38"},
"33", {\ar"38"}, "33", {\ar"39"},
"34", {\ar"39"}, "34", {\ar"40"},
"35", {\ar@{.}"40"},
"36", {\ar@{.}"41"},
"37", {\ar@{.}"42"},
"38", {\ar"42"}, "38", {\ar"43"},
"39", {\ar"43"}, "39", {\ar"44"},
"40", {\ar"44"}, "40", {\ar"45"},
"41", {\ar@{.}"45"},
"42", {\ar@{.}"46"}, "42", {\ar"47"},
"43", {\ar"47"}, "43", {\ar"48"},
"44", {\ar"48"}, "44", {\ar"49"},
"45", {\ar"49"}, "45", {\ar@{.}"50"},
"46", {\ar@{.}"51"},
"47", {\ar@{.}"52"}, "47", {\ar"53"},
"48", {\ar"53"}, "48", {\ar"54"},
"49", {\ar"54"}, "49", {\ar@{.}"55"},
"50", {\ar@{.}"56"},
"51", {\ar"57"},
"52", {\ar@{.}"57"},
"53", {\ar@{.}"58"}, "53", {\ar"59"},
"54", {\ar"59"}, "54", {\ar@{.}"60"},
"55", {\ar@{.}"61"},
"56", {\ar"61"},
"57", {\ar"62"}, "57", {\ar"63"},
"58", {\ar@{.}"63"}, 
"59", {\ar@{.}"64"}, "59", {\ar@{.}"65"},
"60", {\ar@{.}"66"},
"61", {\ar"66"}, "61", {\ar"67"},
"62", {\ar"68"},
"63", {\ar"68"}, "63", {\ar"69"},
"64", {\ar@{.}"69"},
"65", {\ar@{.}"70"},
"66", {\ar"70"}, "66", {\ar"71"},
"67", {\ar"71"},
"68", {\ar"72"}, "68", {\ar"73"},
"69", {\ar"73"}, "69", {\ar@{.}"74"},
"70", {\ar@{.}"75"}, "70", {\ar"76"},
"71", {\ar"76"}, "71", {\ar"77"},
"72", {\ar"78"},
"73", {\ar"78"}, "73", {\ar@{.}"79"},
"76", {\ar@{.}"80"}, "76", {\ar"81"},
"77", {\ar"81"},
"78", {\ar"82"}, "78", {\ar@{.}"83"},
"81", {\ar@{.}"84"}, "81", {\ar"85"},
"82", {\ar@{.}"86"},
"85", {\ar@{.}"87"},
\end{xy}
}
\]
where $P_1=eA$ and $P_2=(1-e)A$ are the two indecomposable projective $A$-modules, $S_1=P_1/\rad P_1$ and $S_2=P_2/\rad P_2=A/AeA$ are the two simple $A$-modules, and $I_1=P_1$ and $I_2=D(A(1-e))$ are the two indecomposable injective $A$-modules. The $*$'s are virtual connecting objects (actually they are limits and colimits of objects of $\cd^b(\mod A)$ and belong to $\cd(A)$). We point out that the left upper component is identified with the right lower component, and the left lower component is identified with the right upper component. So there are three connected components, which are related by morphisms in the infinite radical.

The functor $i^*\colon \ch^b(\proj A)\rightarrow \per(k[t])$ kills the component containing $P_1=eA$ and identifies an object in the $\mathbb{Z}A^\infty_\infty$ component with its right upper neighbour. Thus, we obtain the Auslander--Reiten quiver of $\per(k[t])$
\[
{\scriptsize
\begin{xy} 0;<0.45pt,0pt>:<0pt,-0.45pt>::
(0,250) *+{} ="0",  
(50,200) *+{\circ} ="1", 
(100,150) *+{\Sigma k[t]} ="2", (100,250) *+{\circ} ="3",
(150,100) *+{k[t]} = "4", (150,200) *+{\circ} = "5",
(200,50) *+{\circ} ="6", (200,150) *+{\circ} = "7", (200,250) *+{\Sigma S} ="8",
(250,0) *+{}="9", (250,100) *+{\circ} ="10", (250,200) *+{\circ} ="11",
(300,50) *+{\circ} ="12", (300,150) *+{\circ} = "13", (300,250) *+{S} ="14",
(350,100) *+{\circ} = "15", (350,200) *+{\circ} = "16",
(400,150) *+{\Sigma^2 k[t]} ="17", (400,250) *+{\Sigma^{-1}S} ="18",
(450,200) *+{\Sigma k[t]} ="19",
(500,250) *+{} ="20",
"0", {\ar@{.}"1"},
"1", {\ar"2"}, {\ar@{.}"3"},
"2", {\ar"4"}, {\ar@{.}"5"},
"3", {\ar"5"},
"4", {\ar"6",\ar@{.}"7"},
"5", {\ar"7",\ar"8"},
"6", {\ar@{.}"9",\ar@{.}"10"},
"7", {\ar"10",\ar"11"},
"8", {\ar"11"},
"9", {\ar@{.}"12"},
"10", {\ar@{.}"12",\ar"13"},
"11", {\ar"13",\ar"14"},
"12", {\ar"15"},
"13", {\ar@{.}"15",\ar"16"},
"14", {\ar"16"},
"15", {\ar"17"},
"16", {\ar@{.}"17",\ar"18"},
"17", {\ar"19"},
"18", {\ar@{.}"19"},
"19", {\ar@{.}"20"},
\end{xy}
}
\]
where $S=k[t]/(t)$ is the simple dg $k[t]$-module concentrated in degree $0$, and the left and right rays are identified. We remark that this quiver has two connected components: the one containing $S$ has the shape $\mathbb{Z}A_\infty$ and the one containing $k[t]$ has shape $A^\infty_\infty$. The two components are related by morphisms in the infinite radical.

By Theorem~\ref{t:singularity-category-vs-relative-singularity-category}, there is a triangle functor
\begin{align*}
\per(k[t])\longrightarrow \cd_{sg}(k[x]/x^2)\simeq \underline{\mod}k[x]/x^2
\end{align*}
which is a quotient functor and has kernel $\cd_{fd}(k[t])$. On the Auslander--Reiten quiver, this functor kills the component containing $S$ and identifies all objects in the component containing $k[t]$.
\end{example}

\section{Algebras with radical square zero}

In this section we recover a structural result obtained in \cite{ChenYang15} on singularity categories of algebras with radical square zero by using results in Sections~\ref{s:universal-localisation-of-graded-algebras},~\ref{s:relative-singularity-category} and~\ref{s:dg-model}.

\smallskip
Assume that $k$ is a field.
Let $Q$ be a finite quiver and
let $R$ be the corresponding algebra with radical square zero. 
Assume $Q_0=\{1,\ldots,n\}$ and let $P_i$ (respectively, $S_i$) denote the indecomposable projective module (respectively, simple module) corresponding to the vertex $i\in Q_0$. It is known by \cite[Corollary 3.2]{ChenXW11a} that $\cd_{sg}(R)$ is idempotent complete.
More structural results on $\cd_{sg}(R)$ were obtained in \cite{ChenXW11a,ChenYang15}. We will provide an alternative approach to \cite[Theorem 6.1]{ChenYang15}, which identifies $\cd_{sg}(R)$ with the perfect derived category of the Leavitt path algebra of $Q^{op}$.

We assume that $Q$ has no sources. So there are no simple projective modules. 
Take $M=R\oplus S_1\oplus\ldots\oplus S_n$, and let $A=\End_R(M)$. By a result of Auslander \cite[III.3]{Auslander71}, we know that the global dimension of $A$ is at most $2$. Let $Q^A$ be the quiver of $A$. Then
\begin{itemize}
\item[--] the vertices of $Q^A$ are $i,~i'$, where $i\in Q_0$; 
\item[--] 
the following are all the arrows of $Q^A$:
\begin{itemize} 
\item[$\cdot$] for each $i\in Q_0$, there is an arrow $c_i\colon i\to i'$ (corresponding to the projection $P_i\to S_i$), 
\item[$\cdot$] for each $\alpha\in Q_1$, there is an arrow $a_\alpha\colon s(\alpha)'\to t(\alpha)$ (corresponding to the embedding $S_{s(\alpha)}\to P_{t(\alpha)}$).
\end{itemize}
\end{itemize}
For each $\alpha\in Q_1$, there is a relation $c_{t(\alpha)}a_{\alpha}$ in $A$. By a dimension comparison it is easy to show that these are all the relations. 

By Lemma~\ref{lem:non-complete-cofibrant-model}, $A$ is quasi-isomorphic to the dg quiver algebra $\tilde{A}=(k\tilde{Q},d)$, where $\tilde{Q}$ is the graded quiver which has the same vertices as $Q$ and which has three types of arrows
\begin{itemize}
\item[$\cdot$] for each $i\in Q_0$, an arrow $c_i\colon i\to i'$ of degree $0$;
\item[$\cdot$] for each $\alpha\in Q_1$, an arrow $a_\alpha\colon s(\alpha)'\to t(\alpha)$ of degree $0$;
\item[$\cdot$] for each $\alpha\in Q_1$, an arrow $\alpha'\colon s(\alpha)'\to t(\alpha)'$ of degree $-1$.
\end{itemize}
The differential $d$ takes $\alpha'$ to $c_{t(\alpha)}a_\alpha$ and takes other arrows to $0$.

Let $e=\sum_{i\in Q_0} e_i$, and let $Q'$ be the graded quiver obtained from $Q$ by putting all arrows in degree $-1$. Then  $B=\tilde{A}/\tilde{A}e\tilde{A}$ is  the graded path algebra $kQ'$, considered as a dg algebra with trivial differential.  

\begin{corollary}
\label{cor:singularity-cat-is-AGK-cat-rad2=0}
There is a triangle equivalence 
\[
\cd_{sg}(R)\longrightarrow \per(B)/\cd_{fd}(B).
\]
\end{corollary}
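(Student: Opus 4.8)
The plan is to obtain the statement as a direct application of Theorem~\ref{t:singularity-category-vs-relative-singularity-category} to the pair $(A,e)$ with $A=\End_R(M)$, $M=R\oplus S_1\oplus\cdots\oplus S_n$, and $e$ the idempotent projecting $M$ onto the summand $R$. First I would record the elementary identifications: since $e$ corresponds to the direct summand $R$ of $M$, one has $eAe=\End_R(R)\cong R$, and $AeA$ is the ideal of endomorphisms of $M$ factoring through $\add R=\proj R$, so that $A/AeA=\underline{\End}_R(M)=\underline{\End}_R(S_1\oplus\cdots\oplus S_n)$. I would then check the hypotheses of Theorem~\ref{t:singularity-category-vs-relative-singularity-category}: as $A$ is finite-dimensional over the field $k$ it is free, hence flat over $k$, and it is artinian, hence right noetherian; and since $\gldim A\leq 2$ by Auslander's result, \emph{every} finitely generated $A$-module, in particular every finitely generated $A/AeA$-module, has finite projective dimension over $A$. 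Thus the theorem applies with $B$ the dg algebra furnished by Proposition~\ref{p:recollement-from-projective-general-case}.

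Next I would invoke the finite-global-dimension part of Theorem~\ref{t:singularity-category-vs-relative-singularity-category}: it yields $\thick_{\cd_{sg}(eAe)}(Ae)=\cd_{sg}(eAe)=\cd_{sg}(R)$ together with a triangle equivalence up to direct summands $\cd_{sg}(R)\to\per(B)/\cd_{fg}(B)$. To upgrade this to a genuine triangle equivalence I would use that $\cd_{sg}(R)$ is idempotent complete by \cite[Corollary 3.2]{ChenXW11a}; by the clause of Theorem~\ref{t:singularity-category-vs-relative-singularity-category} asserting that idempotent completeness of $\cd_{sg}(eAe)$ forces $\ch^b(\proj A)/\thick(eA)$ to be idempotent complete, the comparison functor becomes a triangle equivalence $\cd_{sg}(R)\xrightarrow{\simeq}\per(B)/\cd_{fg}(B)$.

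Finally I would identify $B$ concretely and replace $\cd_{fg}(B)$ by $\cd_{fd}(B)$. By Lemma~\ref{lem:non-complete-cofibrant-model} there is a quasi-isomorphism $\rho\colon\tilde A=(k\tilde Q,d)\to A$ from a dg quiver algebra with $e=\rho(\tilde e)$, $\tilde e$ the sum of the trivial paths at the vertices in $Q_0$; hence by Theorem~\ref{t:construction-with-noncomplete-cofibrant-model} the dg algebra $B$ is quasi-equivalent to $\tilde A/\tilde A\tilde e\tilde A$, and, as computed just before the statement, deleting those vertices removes the arrows $c_i$ and $a_\alpha$ and the only differential $d(\alpha')=c_{t(\alpha)}a_\alpha$, leaving exactly the graded path algebra $B=kQ'$ with zero differential. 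In particular $H^0(B)\cong H^0(kQ')=k^{Q_0}$ is finite-dimensional over $k$, so $\cd_{fg}(B)=\cd_{fd}(B)$ by the discussion of subcategories in Section~3; since both $\per(-)$ and $\cd_{fd}(-)$ are preserved up to triangle equivalence under quasi-equivalence (Lemma~\ref{lem:derived-equivalence-restricts-to-per-and-dfd} together with \S\ref{ss:quasi-equiv}), the equivalence of the previous paragraph becomes the asserted $\cd_{sg}(R)\xrightarrow{\simeq}\per(B)/\cd_{fd}(B)$. I expect no serious obstacle: the only points needing genuine care are the bookkeeping in the vertex-deletion computation and confirming that the idempotent completeness of $\cd_{sg}(R)$ transfers, via Theorem~\ref{t:singularity-category-vs-relative-singularity-category}, to $\ch^b(\proj A)/\thick(eA)$; everything else is substitution into results already established.
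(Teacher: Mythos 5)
Your proposal is correct and follows essentially the same route as the paper, which proves the corollary by combining Theorem~\ref{t:singularity-category-vs-relative-singularity-category} with Theorem~\ref{t:construction-with-noncomplete-cofibrant-model} and the idempotent completeness of $\cd_{sg}(R)$ from \cite[Corollary 3.2]{ChenXW11a}. You merely spell out the hypothesis checks (flatness, noetherianity, $\gldim A\leq 2$, $eAe\cong R$) and the passage from $\cd_{fg}(B)$ to $\cd_{fd}(B)$ that the paper leaves implicit.
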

\begin{proof}
This follows from Theorems~\ref{t:singularity-category-vs-relative-singularity-category} and \ref{t:construction-with-noncomplete-cofibrant-model} because $\cd_{sg}(R)$ is idempotent complete.
\end{proof}

Below we use the theory of universal localisation of graded algebras to study the structure of $\per(B)/\cd_{fd}(B)$, compare \cite[Section 5.8]{Smith12}. There is a set of simple graded $B$-modules $S_1^B,\ldots,S_n^B$, one for each vertex of $Q$. For $i\in Q_0$, 
\[
f_i\colon  \bigoplus_{\alpha'\in Q'_1\colon  t(\alpha')=i}e_{s(\alpha')}B\langle 1\rangle \stackrel{(\alpha'\cdot)}{\longrightarrow} e_i B
\]
is a minimal projective resolution of $S_i^B$ as graded $B$-modules. Put $f=\bigoplus_{i\in Q_0} f_i$. Then the universal localisation $B_f$ of $B$ at $f$ is obtained from $B$ by adjoining new arrows $\alpha'^*\colon t(\alpha')\to s(\alpha')$ of degree $1$ for each $\alpha'\in Q'_1$ subject to the following relations:
\begin{itemize}
\item[$\cdot$] $\sum_{\alpha'\in Q'_1\colon  t(\alpha')=i}\alpha'\alpha'^*=e_i$,
\item[$\cdot$] $\alpha'^*\alpha'=e_{s(\alpha')}$ for each $\alpha'\in Q'_1$,
\item[$\cdot$] $\alpha'^*\beta=0$ for $\alpha',\beta'\in Q'_1$ with $\alpha'\neq \beta'$.
\end{itemize}
This is exactly the Leavitt path algebra $L(Q^{op})$ of the quiver $Q^{op}$ (see \cite{AbramsAranda05} for the definition of Leavitt path algebras).

\begin{theorem}[{\cite[Theorem 6.1]{ChenYang15}}]
\label{thm:rad^2=0-vs-Leavitt}
There is a triangle equivalence
\[
\cd_{sg}(R)\simeq \per(L(Q^{op})).
\]
\end{theorem}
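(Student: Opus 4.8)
The strategy is to chain together the two equivalences that have just been established and then replace the target with something more concrete. First I would invoke Corollary~\ref{cor:singularity-cat-is-AGK-cat-rad2=0}, which gives a triangle equivalence $\cd_{sg}(R)\simeq\per(B)/\cd_{fd}(B)$, where $B=kQ'$ is the graded path algebra of $Q$ with all arrows in degree $-1$ regarded as a dg algebra with zero differential. So it suffices to produce a triangle equivalence $\per(B)/\cd_{fd}(B)\simeq\per(L(Q^{op}))$.

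For this I would apply Theorem~\ref{thm:localisation-of-graded-alg} to the morphism $f=\bigoplus_{i\in Q_0}f_i$ in $\grproj B$ described just above the theorem statement. Two things need to be checked. First, $B=kQ'$ is right graded hereditary: since $B$ is a graded path algebra of a quiver with no relations, every graded submodule of a graded projective is graded projective, so $\gldim$ in the graded sense is at most $1$. Hence Theorem~\ref{thm:localisation-of-graded-alg} applies and yields a triangle equivalence up to direct summands
\[
\left(\per(B)/\thick(\con(f))\right)^\omega\longrightarrow\per(B_f),
\]
and $B_f=L(Q^{op})$ by the explicit computation of the universal localisation carried out above. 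Second, I must identify $\thick(\con(f))$ with $\cd_{fd}(B)$: the cones of the $f_i$ are (shifts of) the simple graded modules $S_i^B$, and one has to see that $\thick$ of all the shifts $\con(f_i)\langle p\rangle$, i.e. $\thick$ of all shifts of all the $S_i^B$, is exactly $\cd_{fd}(B)$ — but $\cd_{fd}(B)$ is generated as a thick subcategory by the finite-dimensional simple dg $B$-modules, which under the identification of graded modules with dg modules (trivial differential) are precisely the $S_i^B$ up to shift; so the two thick subcategories coincide. Therefore $\per(B)/\cd_{fd}(B)\simeq\per(B)/\thick(\con(f))$, and its idempotent completion is $\per(L(Q^{op}))$.

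The last point is to remove the "up to direct summands": I would argue that $\cd_{sg}(R)$ is already idempotent complete — this is recorded in the text for radical-square-zero algebras by \cite[Corollary 3.2]{ChenXW11a} — so the composite triangle equivalence up to direct summands $\cd_{sg}(R)\to\per(B)/\cd_{fd}(B)\to\per(L(Q^{op}))$, having idempotent complete source, must in fact be an honest triangle equivalence (and $\per(L(Q^{op}))$, being $\thick$ of a single object, is automatically idempotent complete on its side). This gives the desired equivalence $\cd_{sg}(R)\simeq\per(L(Q^{op}))$.

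I expect the main obstacle to be the clean identification $\thick_{\per(B)}(\{\con(f_i)\langle p\rangle\})=\cd_{fd}(B)$, i.e. verifying both that the cones of the chosen maps $f_i$ really are the simple graded modules $S_i^B$ (this uses that $f_i$ is a \emph{minimal} graded projective resolution, which needs $Q$ to have no sources so that $B$ has no graded-simple projectives) and that these generate all finite-dimensional dg $B$-modules under taking cones, shifts and summands — the latter is the dévissage argument for $\cd_{fd}$ of a graded hereditary algebra with finite-dimensional simples. Everything else is bookkeeping: assembling the explicit presentation of $B_f$ into the Leavitt relations has already been done in the paragraph preceding the theorem, and the passage through idempotent completions is handled by the statement of Theorem~\ref{thm:localisation-of-graded-alg} together with idempotent completeness of $\cd_{sg}(R)$.
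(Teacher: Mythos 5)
Your proposal is correct and follows essentially the same route as the paper: Corollary~\ref{cor:singularity-cat-is-AGK-cat-rad2=0} combined with Theorem~\ref{thm:localisation-of-graded-alg}, after identifying $\cd_{fd}(B)$ with $\thick(\con(f))=\thick(S_i^B\mid i\in Q_0)$ (the paper cites Proposition~\ref{prop:standard-t-str} for this, which is the same d\'evissage you describe), and with the ``up to direct summands'' issue resolved by idempotent completeness of $\cd_{sg}(R)$. Your extra care about the hypotheses of the localisation theorem and the summand-completion step only makes explicit what the paper's terse proof leaves implicit.
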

\begin{proof}
By Proposition~\ref{prop:standard-t-str}, $\cd_{fd}(B)=\thick(S_i^B|i\in Q_0)$. So by Theorem~\ref{thm:localisation-of-graded-alg}, there is a triangle equivalence
\[
\per(B)/\cd_{fd}(B)\simeq\per(B_f)=\per(L(Q^{op})).
\] 
The desired equivalence then follows from Corollary~\ref{cor:singularity-cat-is-AGK-cat-rad2=0}.
\end{proof}

\section{Non-commutative deformations of Kleinian singularities}

In this section we will use our results in Sections~\ref{s:relative-singularity-category} and \ref{s:dg-model} to recover a result of Crawford~\cite{Crawford16} stating that the singularity category of a deformed Kleinian singularity in the sense of Crawley-Boevey--Holland~\cite{CrawleyBoeveyHolland98} is equivalent to the direct sum of singularity categories of certain associated Kleinian singularities.

\smallskip
Assume that $k$ is a field.

\subsection{Deformed derived preprojective algebras}

Let $Q$ be a finite quiver without oriented cycles. Assume $Q_0=\{1,\ldots,n\}$ and let $\lambda=(\lambda_1,\ldots,\lambda_n)\in k^n$.

The \emph{deformed derived preprojective algebra} $\mathbf{\Pi}^\lambda(Q)$ \cite{Keller11} is defined as the dg quiver algebra $(k\tilde{Q},d)$, where $\tilde{Q}$ is the graded quiver whose vertices are the same as $Q$ and whose arrows are
\begin{itemize}
\item[-] arrows of $Q$ (they are in degree $0$),
\item[-] $a^*\colon t(a)\to s(a)$ in degree $0$ for each $a\in Q_1$,
\item[-] $t_i\colon i\to i$ in degree $-1$ for each $i\in Q_0$. 
\end{itemize}
The differential $d$ takes the following values on arrows
\begin{itemize}
\item[-] $d(a)=0$ for $a\in Q_1$,
\item[-] $d(a^*)=0$ for $a\in Q_1$,
\item[-] $d(t_i)=e_i\sum_{a\in Q_1} (aa^*-a^*a) e_i-\lambda_ie_i$ for $i\in Q_0$.
\end{itemize}

The zeroth cohomology $\Pi^\lambda(Q)$ of $\mathbf{\Pi}^\lambda(Q)$ is the \emph{deformed preprojective algebra} of $Q$ in the sense of \cite{CrawleyBoeveyHolland98}. Precisely,
\[
\Pi^\lambda(Q)=k\bar{Q}/(e_i\sum_{a\in Q_1} (aa^*-a^*a) e_i-\lambda_ie_i|i\in Q_0),
\]
where $\bar{Q}$ is the double quiver of $Q$, obtained from $Q'$ by removing the $t_i$'s. 
The dg algebra $\mathbf{\Pi}(Q):=\mathbf{\Pi}^0(Q)$ is the \emph{derived preprojective algebra} of $Q$ and the algebra $\Pi(Q):=\Pi^0(Q)$ is the \emph{preprojective algebra} of $Q$.

\begin{lemma}\label{lem:acyclicity-of-derived-preprojective-algebra}
Let $Q$ be a connected quiver which is not Dynkin. Then the projection $\mathbf{\Pi}^\lambda(Q)\to \Pi^\lambda(Q)$ is a quasi-isomorphism.
\end{lemma}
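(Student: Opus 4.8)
The plan is to show that the kernel of the projection $\mathbf{\Pi}^\lambda(Q)\to\Pi^\lambda(Q)$ is acyclic, equivalently that the dg quiver algebra $\mathbf{\Pi}^\lambda(Q)$ has cohomology concentrated in degree $0$. The natural tool is Lemma~\ref{lem:deleting-a-contractible-vertex}: one would like to delete each of the degree $-1$ loops $t_i$, checking at each stage that the vertex being deleted is \emph{contractible}, i.e. that there is an element $x$ in the corner algebra of the (truncated) quiver algebra with $d(x)=e_i$. However, in $\mathbf{\Pi}^\lambda(Q)$ the differential of $t_i$ is a quadratic element of the path algebra, not a trivial path, so $t_i$ cannot be deleted directly. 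The standard trick, which I would carry out, is to pass to a Koszul-type change of variables or, more concretely, to exploit that $\mathbf{\Pi}^\lambda(Q)$ is the deformed $3$-Calabi--Yau completion (or deformed derived preprojective algebra) of the path algebra $kQ$ and that $kQ$ has global dimension $1$; one then invokes the known acyclicity of the derived preprojective algebra in the non-Dynkin case.

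First I would recall Keller's result \cite{Keller11} (and the surrounding literature on Calabi--Yau completions) that for a finite quiver $Q$ that is connected and not Dynkin, the derived preprojective algebra $\mathbf{\Pi}(Q)=\mathbf{\Pi}^0(Q)$ has cohomology concentrated in degree $0$, so the projection $\mathbf{\Pi}(Q)\to\Pi(Q)$ is a quasi-isomorphism; this rests on the fact that the preprojective algebra $\Pi(Q)$ is Koszul of global dimension $2$ when $Q$ is not Dynkin, together with a comparison of the minimal resolution of $\Pi(Q)$ over its enveloping algebra with the bimodule Koszul complex built into $\mathbf{\Pi}(Q)$. Next I would treat the deformed case: since $d(t_i)$ differs from the undeformed value only by the degree-$0$ central-type term $-\lambda_i e_i$, the associated-graded (with respect to the path-length filtration, placing the $t_i$ in filtration degree giving the leading term $e_i\sum_a(aa^*-a^*a)e_i$) of $\mathbf{\Pi}^\lambda(Q)$ is exactly $\mathbf{\Pi}(Q)$. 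A standard spectral sequence / filtered-complex argument then transports acyclicity in positive (co)homological degrees from $\mathbf{\Pi}(Q)$ to $\mathbf{\Pi}^\lambda(Q)$, and hence the projection $\mathbf{\Pi}^\lambda(Q)\to H^0(\mathbf{\Pi}^\lambda(Q))=\Pi^\lambda(Q)$ is a quasi-isomorphism.

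The main obstacle is making the filtration argument clean. One must choose a filtration on $k\tilde{Q}$ that is compatible with the differential, has associated graded equal to the undeformed dg algebra, and is exhaustive and bounded below on each cohomological degree so that the spectral sequence of the filtered complex converges. Placing each arrow of $Q$ and each $a^*$ in filtration degree $1$ and each $t_i$ in filtration degree $2$ does the job: the leading term of $d(t_i)$ is the quadratic preprojective relation, the correction $-\lambda_i e_i$ sits in strictly lower filtration, and on the finite-dimensional piece in each path length the filtration is finite, so convergence is automatic. The remaining verification — that $E_1$ of this spectral sequence is $\mathbf{\Pi}(Q)$ with its differential, whose cohomology vanishes in nonzero degrees by the non-Dynkin Koszulity input — is then routine, and one concludes $H^{\neq 0}(\mathbf{\Pi}^\lambda(Q))=0$, which is precisely the assertion of the lemma.
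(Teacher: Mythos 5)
Your proposal is correct and follows essentially the same route as the paper: the undeformed case $\lambda=0$ is settled by Koszulity and global dimension $2$ of $\Pi(Q)$ (via the dg quiver resolution of Lemma~\ref{lem:non-complete-cofibrant-model} and Remark~\ref{rem:cofibrant-model-for-Koszul-algebra}), and the deformed case is reduced to it by filtering $k\tilde{Q}$ with the arrows $a,a^*$ in weight $1$ and $t_i$ in weight $2$, so that the associated graded is $\mathbf{\Pi}(Q)$ and the resulting spectral sequence (as in Ginzburg's argument, which the paper also borrows) degenerates because $H^{\neq 0}(\mathbf{\Pi}(Q))=0$.
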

\begin{proof}
It is known that $\Pi(Q)$ is Koszul (\cite[Theorem 1.9]{MartinezVilla96}) and of global dimension $2$ (\cite[Proposition 4.2]{BaerGeigleLenzing87}). So it follows from Lemma~\ref{lem:non-complete-cofibrant-model} and Remark~\ref{rem:cofibrant-model-for-Koszul-algebra} that the desired result holds for $\lambda=0$. In other words, $\mathbf{\Pi}(Q)$ has cohomology concentrated in degree $0$ (see also \cite[Section 4.2]{Keller10}).

The rest of the proof is borrowed from \cite[the proof of Corollary 5.4.4]{Ginzburg06}. Define an Adams grading $|\cdot|_{\mathrm{a}}$ on $k\tilde{Q}$ by setting $|a|_{\mathrm{a}}=-1=|a^*|_{\mathrm{a}}$ for $a\in Q_1$ and $|t_i|_{\mathrm{a}}=-2$ for $i\in Q_0$. Consider the filtration of $kQ'$
\[
0=F^{1}\subset F^0\subset F^{-1}\subset\ldots\subset F^{p+1}\subset F^{p}\subset\ldots\subset k\tilde{Q},
\]
where $F^p$ is the $k$-span of paths of $\tilde{Q}$ of Adams degree $\geq p$. Then $F^p$ is closed under the differential $d$, and in this way $\mathbf{\Pi}^\lambda(Q)$ becomes a filtered dg algebra. It is clear that the subquotient $F^p/F^{p+1}$ is the $k$-span of paths of $Q'$ of Adams degree $p$, and the definition of $d$ shows that the associated graded dg algebra is $\mathbf{\Pi}(Q)^{gr}$, the dg algebra $\mathbf{\Pi}(Q)$ with an extra grading given by the Adams grading.
Consider the two spectral sequences associated to the above filtrations for $\mathbf{\Pi}^\lambda(Q)$ and for $\mathbf{\Pi}(Q)$, which converge to $H^*\mathbf{\Pi}^\lambda(Q)$ and $H^*\mathbf{\Pi}(Q)$, respectively. 
These two spectral sequences have the same $E_0$ pages, with $E_0^{pq}=(\mathbf{\Pi}(Q)^{gr})^{p,p+q}$ (where the Adams grading is set as the first grading) and the differential is the restriction of the differential of $\mathbf{\Pi}(Q)^{gr}$, and therefore the terms of the two $E_1$ pages are also the same, with $E_1^{pq}=H^{p,p+q}\mathbf{\Pi}(Q)^{gr}$. Since $\mathbf{\Pi}(Q)$ has cohomology concentrated in degree $0$, it follows that $E_1^{pq}$ vanishes unless $p=-q$, and hence the two spectral sequences are the same.  
As a consequence, the cohomology of $\mathbf{\Pi}^\lambda(Q)$ is concentrated in degree $0$. This finishes the proof.
\end{proof}

Fix a vertex $i\in Q_0$ and let $Q'$ be the quiver obtained from $Q$ by deleting the vertex $i$ and $\lambda'$ be the tuple obtained from $\lambda$ by removing the $i$-th entry. The following lemma is clear from the definition of $\mathbf{\Pi}^\lambda(Q)$.

\begin{lemma}\label{lem:deleting-a-vetex-preprojective-algebra}
$\mathbf{\Pi}^\lambda(Q)/\mathbf{\Pi}^\lambda(Q)e_i\mathbf{\Pi}^\lambda(Q)=\mathbf{\Pi}^{\lambda'}(Q')$.
\end{lemma}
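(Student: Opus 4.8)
The plan is to deduce this directly from the description of \textit{deleting a vertex} from a dg quiver algebra recorded in Section~\ref{ss:deleting-a-vertex}. First, note that $\mathbf{\Pi}^\lambda(Q)=(k\tilde{Q},d)$ is a dg quiver algebra in the sense of Section~\ref{ss:dg-quiver-algebra} (the differential is prescribed on arrows and vanishes on trivial paths), so the discussion of Section~\ref{ss:deleting-a-vertex} applies to the idempotent $e_i$: the quotient $\mathbf{\Pi}^\lambda(Q)/\mathbf{\Pi}^\lambda(Q)e_i\mathbf{\Pi}^\lambda(Q)$ is again a dg quiver algebra, whose graded quiver is $\tilde{Q}$ with the vertex $i$ and all arrows incident to it deleted, and whose differential is obtained from $d$ by discarding every summand that is a path through $i$. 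It then remains to check that this graded quiver and this differential are exactly those defining $\mathbf{\Pi}^{\lambda'}(Q')$.

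For the graded quiver: the arrows of $\tilde{Q}$ are the arrows $a$ of $Q$, their reverses $a^*\colon t(a)\to s(a)$ (all in degree $0$), and the loops $t_j\colon j\to j$ of degree $-1$ for $j\in Q_0$. Deleting $i$ removes precisely the $a$ and $a^*$ with $i\in\{s(a),t(a)\}$ together with $t_i$, leaving the arrows of $Q'$, their reverses, and the loops $t_j$ for $j\in Q'_0=Q_0\setminus\{i\}$ --- that is, the graded quiver underlying $\mathbf{\Pi}^{\lambda'}(Q')$. For the differential: on the surviving arrows $a$, $a^*$ it is still $0$, while for $j\neq i$, using the paper's composition convention $gf=$ ``$f$ then $g$'' (so that $aa^*$ is a loop at $t(a)$ visiting $\{t(a),s(a)\}$ and $a^*a$ a loop at $s(a)$ visiting $\{s(a),t(a)\}$), one has
\[
d(t_j)=\sum_{a\in Q_1,\,t(a)=j}aa^*-\sum_{a\in Q_1,\,s(a)=j}a^*a-\lambda_j e_j .
\]
A summand $aa^*$ passes through $i$ iff $s(a)=i$, and a summand $a^*a$ passes through $i$ iff $t(a)=i$; discarding exactly these turns the two sums into the analogous sums over $a\in Q'_1$, and since $\lambda'_j=\lambda_j$ the result is precisely the differential of $\mathbf{\Pi}^{\lambda'}(Q')$ evaluated at $t_j$.

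There is no genuine obstacle beyond this bookkeeping; the only place that needs care is tracking the composition convention, so that $aa^*$ and $a^*a$ are correctly read as loops at $t(a)$ and $s(a)$ respectively --- this is what translates ``the summand passes through $i$'' into the conditions $s(a)=i$ and $t(a)=i$ used above. Once generators, the absence of any extra relations, and the differentials have been matched, the equality of dg quiver algebras is immediate, which is why the statement is clear from the definition.
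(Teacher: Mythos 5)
Your proposal is correct and follows exactly the route the paper intends: the paper states the lemma as clear from the definitions, and your argument is just the careful bookkeeping behind that, using the deleting-a-vertex description of Section~\ref{ss:deleting-a-vertex} and matching generators and differentials (including the correct reading of $aa^*$ and $a^*a$ as loops at $t(a)$ and $s(a)$). Nothing is missing.
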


\subsection{Dynkin quivers}
Assume that the characteristic of $k$ is $0$.
Let $Q$ be a Dynkin quiver. Assume $Q_0=\{1,\ldots,n\}$ and let $\lambda=(\lambda_1,\ldots,\lambda_n)\in k^n$. Let $I_\lambda$ be the set of vertices $i$ of $Q$ such that $\lambda_i=0$ and let $Q_\lambda$ be the full subquiver of $Q$ with vertex set $I_\lambda$. By Lemma~\ref{lem:deleting-a-vetex-preprojective-algebra}, there is a surjective dg algebra homomorphism $\mathbf{\Pi}^\lambda(Q)\to\mathbf{\Pi}(Q_\lambda)$ with kernel the ideal generated by the trivial paths $e_i,~i\not\in I_\lambda$.

\begin{lemma}\label{lem:reduction-for-dominant-weight}
Assume that $\lambda$ is dominant in the sense of \cite[Section 7]{CrawleyBoeveyHolland98}. Then the surjective dg algebra homomorphism $\mathbf{\Pi}^\lambda(Q)\to\mathbf{\Pi}(Q_\lambda)$ is a quasi-isomorphism.
\end{lemma}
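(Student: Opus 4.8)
The plan is to delete the vertices of $Q$ lying outside $I_\lambda$ one at a time and to check at each step, by means of Lemma~\ref{lem:deleting-a-contractible-vertex}, that the resulting surjection is a quasi-isomorphism. Concretely, enumerate $Q_0\setminus I_\lambda=\{j_1,\dots,j_r\}$, set $Q^{(0)}=Q$, $\lambda^{(0)}=\lambda$, and let inductively $Q^{(s+1)}$ be $Q^{(s)}$ with the vertex $j_{s+1}$ deleted and $\lambda^{(s+1)}$ the tuple obtained from $\lambda^{(s)}$ by deleting the $j_{s+1}$-th entry; then $Q^{(r)}=Q_\lambda$ and $\lambda^{(r)}=0$. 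By Lemma~\ref{lem:deleting-a-vetex-preprojective-algebra} there are surjective dg algebra homomorphisms $\mathbf{\Pi}^{\lambda^{(s)}}(Q^{(s)})\to\mathbf{\Pi}^{\lambda^{(s+1)}}(Q^{(s+1)})$ whose composite is the map $\mathbf{\Pi}^\lambda(Q)\to\mathbf{\Pi}(Q_\lambda)$ of the statement (both are surjections with kernel the ideal generated by the $e_i$, $i\notin I_\lambda$), so it suffices to show each of these $r$ maps is a quasi-isomorphism. Note that $Q^{(s)}$ is a full subquiver of the Dynkin quiver $Q$, hence a disjoint union of Dynkin quivers, and that $\lambda^{(s)}$ is again dominant: a positive root of $Q^{(s)}$ not lying in the root system of $(Q^{(s)})_{\lambda^{(s)}}=Q_\lambda$ is a positive root of $Q$ not lying in the root system of $Q_\lambda$, on which $\lambda^{(s)}$ agrees with $\lambda$.

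Now fix $s$ and abbreviate $i=j_{s+1}$ and $\mathbf{A}=\mathbf{\Pi}^{\lambda^{(s)}}(Q^{(s)})=(k\tilde{Q}^{(s)},d)$. Since the arrows of $\tilde{Q}^{(s)}$ sit in degrees $0$ and $-1$, the dg algebra $\mathbf{A}$ is concentrated in non-positive degrees and $H^0(\mathbf{A})=\Pi^{\lambda^{(s)}}(Q^{(s)})$. By Lemma~\ref{lem:deleting-a-contractible-vertex} it is enough to produce $x\in e_i(k\tilde{Q}^{(s)})e_i$ with $d(x)=e_i$. As $e_i$ is a cocycle (the differential of a dg quiver algebra kills trivial paths), this follows once $[e_i]=0$ in $H^0(\mathbf{A})=\Pi^{\lambda^{(s)}}(Q^{(s)})$: writing $e_i=d(y)$ and putting $x:=e_iye_i$ one has $d(x)=e_id(y)e_i=e_i$ because $d(e_i)=0$. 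We are thus reduced to showing that $e_i=0$ in $\Pi^{\lambda^{(s)}}(Q^{(s)})$ for every $i\in Q_0\setminus I_\lambda$. This is exactly the structure theorem of Crawley--Boevey--Holland for deformed preprojective algebras of Dynkin quivers at a dominant weight: by \cite[Section~7]{CrawleyBoeveyHolland98}, applied componentwise to the disjoint union of Dynkin quivers $Q^{(s)}$ with its dominant weight $\lambda^{(s)}$, the canonical surjection $\Pi^{\lambda^{(s)}}(Q^{(s)})\to\Pi((Q^{(s)})_{\lambda^{(s)}})=\Pi(Q_\lambda)$ is an isomorphism; hence its kernel, the ideal generated by the $e_i$ with $i\notin I_\lambda$, vanishes, and in particular each such $e_i$ is zero, completing the proof.

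The only non-formal ingredient is this input from \cite{CrawleyBoeveyHolland98}, and I expect pinning down its precise form --- and, with it, checking that ``dominant'' is exactly the condition making the idempotents at the non-zero-weight vertices die --- to be the main point; everything else is bookkeeping with the two vertex-deletion lemmas. This also explains why the filtration and spectral sequence argument of Lemma~\ref{lem:acyclicity-of-derived-preprojective-algebra} is of no help here: for a Dynkin quiver $\mathbf{\Pi}(Q)$ does \emph{not} have cohomology concentrated in degree $0$, so comparing $\mathbf{\Pi}^\lambda(Q)$ with $\mathbf{\Pi}(Q)$ would give the wrong answer.
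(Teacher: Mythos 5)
Your proposal is correct and follows essentially the same route as the paper: the Crawley--Boevey--Holland isomorphism $\Pi^{\lambda}(Q)\cong\Pi(Q_\lambda)$ (in the paper, \cite[Lemma 7.1(1)]{CrawleyBoeveyHolland98} combined with finite-dimensionality from \cite[Lemma 2.4]{CrawleyBoeveyHolland98}) forces each $e_i$ with $i\notin I_\lambda$ to be a coboundary in degree $0$, and Lemma~\ref{lem:deleting-a-contractible-vertex} then yields the quasi-isomorphism. The only difference is bookkeeping: the paper applies the CBH result once to produce elements $x_i\in e_ik\tilde{Q}e_i$ with $d(x_i)=e_i$ for all $i\notin I_\lambda$ simultaneously and then deletes the vertices, so your re-application of CBH at every intermediate quiver $Q^{(s)}$ (and hence the check that $\lambda^{(s)}$ stays dominant) is extra work that can be avoided by pushing the $x_i$ forward through the successive quotients.
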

\begin{proof}
There is a surjective algebra homomorphism $\Pi^\lambda(Q)\to\Pi(Q_\lambda)$ with kernel the ideal generated by $e_i,~i\not\in I_\lambda$. Since both algebras are finite-dimensional (\cite[Lemma 2.4]{CrawleyBoeveyHolland98}), it follows from \cite[Lemma 7.1(1)]{CrawleyBoeveyHolland98} that this surjective algebra homomorphism is an isomorphism. As a consequence, for each $i\not\in I_\lambda$, there exists $x_i\in e_ikQ' e_i$ such that $d(x_i)=e_i$. Now applying Lemma~\ref{lem:deleting-a-contractible-vertex} we obtain the desired result.
\end{proof}

\subsection{Singularity categories} Assume that the characteristic of $k$ is $0$.
Let $Q$ be an Euclidean quiver with $0$ an extending vertex and let $Q'$ be the Dynkin quiver obtained from $Q$ by deleting $0$. Let $\lambda=(\lambda_0,\lambda_1,\ldots,\lambda_n)\in k^{n+1}$ and $\lambda'=(\lambda_1,\ldots,\lambda_n)$. $\lambda$ is said to be \emph{quasi-dominant} if $\lambda'$ is dominant.

Put $R=e_0\Pi^{\lambda}(Q)e_0$. If $\lambda=0$, then $R$ is the Kleinian singularity of type $Q'$. According to \cite[Lemma 2.14]{Crawford16}, in the isomorphism class of $R$ there is one with $\lambda$ quasi-dominant. Write $Q'_{\lambda'}=Q^{(1)}\cup \ldots\cup Q^{(s)}$ as the disjoint union of Dynkin quivers. Let $R^{(1)},\ldots,R^{(s)}$ be the corresponding Kleinian singularities.

\begin{theorem}[{\cite[Theorem 4.4]{Crawford16}}]
\label{thm:sing-cat-of-deformed-preproj-alg}
Assume that $\lambda$ is quasi-dominant.
Then there is a triangle equivalence 
\[
\cd_{sg}(R)\simeq \cd_{sg}(R^{(1)})\oplus\ldots\oplus\cd_{sg}(R^{(s)}).
\]
\end{theorem}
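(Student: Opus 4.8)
The strategy is to run the machinery of Sections~\ref{s:relative-singularity-category} and~\ref{s:dg-model} with $A=\Pi^\lambda(Q)$ and $e=e_0$, thereby expressing $\cd_{sg}(R)$ as a quotient of the perfect derived category of a deformed derived preprojective algebra of the \emph{Dynkin} quiver $Q'$, and then to treat each Dynkin component of $Q'_{\lambda'}$ separately by the same machinery with weight $0$. First I would take $A=\Pi^\lambda(Q)$ and $e=e_0\in A$, so that $R=eAe$. Since $Q$ is Euclidean, hence connected and not Dynkin, $A$ is right noetherian of finite global dimension, and by Lemma~\ref{lem:acyclicity-of-derived-preprojective-algebra} the canonical map $\mathbf{\Pi}^\lambda(Q)\to A$ is a quasi-isomorphism from a dg quiver algebra under which $e_0$ is the image of the trivial path $\tilde{e}_0$. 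Because $A$ has finite global dimension, every finitely generated $A/AeA$-module has finite projective dimension over $A$, so Theorem~\ref{t:singularity-category-vs-relative-singularity-category} yields a triangle equivalence up to direct summands $\cd_{sg}(R)\to\per(B)/\cd_{fg}(B)$, where $B$ is the dg algebra of Proposition~\ref{p:recollement-from-projective-general-case}, and Theorem~\ref{t:construction-with-noncomplete-cofibrant-model} identifies $B$, up to quasi-equivalence, with $\mathbf{\Pi}^\lambda(Q)/\mathbf{\Pi}^\lambda(Q)\tilde{e}_0\mathbf{\Pi}^\lambda(Q)$.

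Next I would compute this dg quotient. By Lemma~\ref{lem:deleting-a-vetex-preprojective-algebra} it equals $\mathbf{\Pi}^{\lambda'}(Q')$, and since $\lambda'$ is dominant by the quasi-dominance hypothesis, Lemma~\ref{lem:reduction-for-dominant-weight} shows that the surjection $\mathbf{\Pi}^{\lambda'}(Q')\to\mathbf{\Pi}(Q'_{\lambda'})$ is a quasi-isomorphism. As $Q'_{\lambda'}=Q^{(1)}\cup\cdots\cup Q^{(s)}$ is a disjoint union of quivers, $\mathbf{\Pi}(Q'_{\lambda'})=\prod_{i=1}^{s}\mathbf{\Pi}(Q^{(i)})$ as dg algebras, so $B$ is quasi-equivalent to $\prod_{i=1}^{s}\mathbf{\Pi}(Q^{(i)})$. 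Since $\cd$ carries a finite product of dg algebras to a finite direct sum of triangulated categories, compatibly with $\per$ and $\cd_{fg}$, this gives
\[
\per(B)/\cd_{fg}(B)\simeq\bigoplus_{i=1}^{s}\per(\mathbf{\Pi}(Q^{(i)}))/\cd_{fg}(\mathbf{\Pi}(Q^{(i)})),
\]
reducing the problem to identifying the $i$-th summand with $\cd_{sg}(R^{(i)})$.

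For this I would apply the first two steps once more, now to the \emph{undeformed} datum $A_i=\Pi(\hat{Q}^{(i)})$ with $e=e_0$, where $\hat{Q}^{(i)}$ is the Euclidean quiver obtained from $Q^{(i)}$ by adjoining an extending vertex. Then $e_0A_ie_0$ is the Kleinian singularity $R^{(i)}$ in its standard realisation (the case $\lambda=0$ recalled earlier in this section), $A_i$ has the properties required above, and the associated dg algebra is quasi-equivalent to $\mathbf{\Pi}(\hat{Q}^{(i)})/\mathbf{\Pi}(\hat{Q}^{(i)})\tilde{e}_0\mathbf{\Pi}(\hat{Q}^{(i)})=\mathbf{\Pi}(Q^{(i)})$, so Theorem~\ref{t:singularity-category-vs-relative-singularity-category} provides a triangle equivalence up to direct summands $\cd_{sg}(R^{(i)})\to\per(\mathbf{\Pi}(Q^{(i)}))/\cd_{fg}(\mathbf{\Pi}(Q^{(i)}))$. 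Concatenating this with the two displays above yields the asserted comparison between $\cd_{sg}(R)$ and $\bigoplus_{i=1}^{s}\cd_{sg}(R^{(i)})$, a priori only up to direct summands.

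It remains to promote this to a genuine triangle equivalence, and this is the step that needs real input rather than bookkeeping. Each $R^{(i)}$ is a graded Gorenstein commutative $k$-algebra with degree-zero part $k$ and isolated singularity at the irrelevant ideal, so $\cd_{sg}(R^{(i)})$ is idempotent complete by Theorem~\ref{thm:idempotent-completeness-and-Hom-finiteness-of-singularity-category}; hence $\bigoplus_{i}\cd_{sg}(R^{(i)})$ is idempotent complete, the functors $\cd_{sg}(R^{(i)})\to\per(\mathbf{\Pi}(Q^{(i)}))/\cd_{fg}(\mathbf{\Pi}(Q^{(i)}))$ are genuine equivalences by the last clause of Theorem~\ref{t:singularity-category-vs-relative-singularity-category}, and so is the composite $\bigoplus_{i}\cd_{sg}(R^{(i)})\to\per(B)/\cd_{fg}(B)$. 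To conclude, one needs $\cd_{sg}(R)$ itself idempotent complete: for $\lambda=0$ this is again Theorem~\ref{thm:idempotent-completeness-and-Hom-finiteness-of-singularity-category}, while for general quasi-dominant $\lambda$ one uses that, by \cite{CrawleyBoeveyHolland98}, $R$ is a commutative Gorenstein affine surface with only isolated singularities, so that idempotent completeness of $\cd_{sg}(R)$ follows by passing to the completion at each singular point and applying Lemma~\ref{L:stable-complete} (alternatively, by Theorem~\ref{t:singularity-category-vs-relative-singularity-category} it suffices to check directly that $\ch^b(\proj A)/\thick(e_0A)$ is idempotent complete). Then Theorem~\ref{t:singularity-category-vs-relative-singularity-category} upgrades $\cd_{sg}(R)\to\per(B)/\cd_{fg}(B)$ to a triangle equivalence, and composing with a quasi-inverse of the equivalence obtained above finishes the proof. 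I expect this idempotent-completeness upgrade in the genuinely deformed case to be the main obstacle; everything else is a direct assembly of Lemmas~\ref{lem:acyclicity-of-derived-preprojective-algebra},~\ref{lem:deleting-a-vetex-preprojective-algebra} and~\ref{lem:reduction-for-dominant-weight} with Theorems~\ref{t:singularity-category-vs-relative-singularity-category} and~\ref{t:construction-with-noncomplete-cofibrant-model}.
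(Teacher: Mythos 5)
Your chain of reductions is exactly the paper's: pass to the dg model $B$ via Theorem~\ref{t:singularity-category-vs-relative-singularity-category} and Theorem~\ref{t:construction-with-noncomplete-cofibrant-model}, identify $B$ with $\mathbf{\Pi}^{\lambda'}(Q')$ using Lemmas~\ref{lem:acyclicity-of-derived-preprojective-algebra} and~\ref{lem:deleting-a-vetex-preprojective-algebra}, reduce to $\mathbf{\Pi}(Q'_{\lambda'})=\prod_i\mathbf{\Pi}(Q^{(i)})$ by Lemma~\ref{lem:reduction-for-dominant-weight}, and identify each factor with $\cd_{sg}(R^{(i)})$ by running the same machinery in the undeformed case, where idempotent completeness of $\cd_{sg}(R^{(i)})$ comes from Theorem~\ref{thm:idempotent-completeness-and-Hom-finiteness-of-singularity-category}. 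All of that is correct and is what the paper does.

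The genuine gap is in the step you yourself flagged as the crux: idempotent completeness of $\cd_{sg}(R)$ for the deformed algebra $R=e_0\Pi^{\lambda}(Q)e_0$. Your justification asserts that $R$ is ``a commutative Gorenstein affine surface with only isolated singularities'' and then passes to completions at singular points and invokes Lemma~\ref{L:stable-complete}. But for quasi-dominant $\lambda$ the algebra $R$ is in general \emph{not} commutative: commutativity of $e_0\Pi^{\lambda}(Q)e_0$ fails whenever $\lambda\cdot\delta\neq 0$ (e.g.\ $\lambda'=0$, $\lambda_0\neq 0$ is quasi-dominant and gives a non-commutative $R$ with $\cd_{sg}(R)\neq 0$), and these non-commutative deformations are precisely the point of \cite{CrawleyBoeveyHolland98} and of this section. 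So neither Theorem~\ref{thm:idempotent-completeness-and-Hom-finiteness-of-singularity-category} nor the completion argument applies as stated; to salvage something along those lines you would at least need $R$ to be Iwanaga--Gorenstein and module-finite over a suitable complete local centre together with a comparison of $\cd_{sg}(R)$ with the completed singularity categories, none of which you establish. Your alternative remark (``check directly that $\ch^b(\proj A)/\thick(e_0A)$ is idempotent complete'') is a correct reformulation of what is needed, but it is not carried out. The paper does not prove this statement either: it imports it as \cite[Corollary 3.7]{Crawford16}, which is the external input your argument is missing.
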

\begin{proof}
Let $e=e_0$, $A=\mathbf{\Pi}^\lambda(Q)$ and $B$ be the dg algebra obtained in Proposition~\ref{p:recollement-from-projective-general-case}. Since $A$ has global dimension at most $2$  by \cite[Theorem 1.5 and Corollary 3.5]{CrawleyBoeveyHolland98}, it follows from Theorem~\ref{t:singularity-category-vs-relative-singularity-category} that there is a triangle equivalence $\cd_{sg}(R)\simeq \per(B)/\cd_{fg}(B)$ up to direct summands, which is in fact a triangle equivalence as $\cd_{sg}(R)$ is idempotent complete (\cite[Corollary 3.7]{Crawford16}). By Lemma~\ref{lem:acyclicity-of-derived-preprojective-algebra}, the projection $\mathbf{\Pi}^\lambda(Q)\to \Pi^\lambda(Q)$ is a quasi-isomorphism of dg algebras. So by Theorem~\ref{t:construction-with-noncomplete-cofibrant-model} that we can take $B=\mathbf{\Pi}^\lambda(Q)/\mathbf{\Pi}^\lambda(Q)e\mathbf{\Pi}^\lambda(Q)$, which is $\mathbf{\Pi}^{\lambda'}(Q')$ by Lemma~\ref{lem:deleting-a-vetex-preprojective-algebra}. So there is a triangle equivalence
\[
\cd_{sg}(R)\simeq \per(\mathbf{\Pi}^{\lambda'}(Q'))/\cd_{fd}(\mathbf{\Pi}^{\lambda'}(Q')).
\]
By Lemma~\ref{lem:reduction-for-dominant-weight}, there are triangle equivalences
\begin{align*}
\per&(\mathbf{\Pi}^{\lambda'}(Q'))/\cd_{fd}(\mathbf{\Pi}^{\lambda'}(Q'))
\simeq 
\per(\mathbf{\Pi}(Q'_{\lambda'}))/\cd_{fd}(\mathbf{\Pi}(Q'_{\lambda'}))\\
&\simeq
\per(\mathbf{\Pi}(Q^{(1)}))/\cd_{fd}(\mathbf{\Pi}(Q^{(1)}))\oplus\ldots\oplus \per(\mathbf{\Pi}(Q^{(s)}))/\cd_{fd}(\mathbf{\Pi}(Q^{(s)})).
\end{align*}
It follows that there is a triangle equivalence
\[
\cd_{sg}(R)\simeq \cd_{sg}(R^{(1)})\oplus\ldots\oplus\cd_{sg}(R^{(s)}),
\]
as desired.
\end{proof}

\section{3D quotient singularities and toric threefolds}
In this section we show that the singularity categories of  3D Gorenstein quotient singularities 
and Gorenstein affine toric threefolds are (up to direct summands) triangle equivalent to small cluster categories of certain quivers with potential. This result extends earlier results in \cite{deVolcseyVandenBergh16} and \cite{AmiotIyamaReiten15}, which are for isolated singularities.

\smallskip
Assume that $k$ is a field.

\subsection{Quivers with potential}
Let $Q$ be a finite quiver. We denote by $\mathrm{Pot}(Q)$ the (finite) linear combinations of cycles of $Q$ and call elements of $\mathrm{Pot}(Q)$ \emph{finite potentials} of $Q$.  In this section we  deal with finite potentials only and we will simply call them potentials. 
For an arrow $a$ of $Q$, we define $ \del_a \colon  \mathrm{Pot}(Q) \to {kQ} $, the {\em cyclic derivative with respect to $a$}, 
as the unique $k$-linear map which takes a cycle $c$ to the sum
$ \sum_{c=u a v} vu $ taken over all decompositions of the cycle $c$
(where $u$ and $v$ are possibly trivial paths). 

Let $(Q,W)$ be a quiver with potential. The \emph{Ginzburg dg algebra ${\Gamma}(Q,W)$} is
constructed as the dg quiver algebra $(\widehat{k\tilde{Q}},d)$ \cite{Ginzburg06}: The graded quiver $\tilde{Q}$ has the same vertices as $Q$ and its arrows are
\begin{itemize}
\item the arrows of $Q$ (they all have degree~$0$),
\item an arrow $a^*\colon  j \to i$ of degree $-1$ for each arrow $a\colon i\to j$ of $Q$,
\item a loop $t_i \colon  i \to i$ of degree $-2$ for each vertex $i$
of $Q$.
\end{itemize}
The differential $d$ takes the
following values on the arrows of $\tilde{Q}$:
\begin{itemize}
\item $d(a)=0$ for each arrow $a$ of $Q$,
\item $d(a^*) = \del_a W$ for each arrow $a$ of $Q$,
\item $d(t_i) = e_i (\sum_{a} [a,a^*]) e_i$ for each vertex $i$ of $Q$, where
$e_i$ is the trivial path at $i$ and the sum runs over the set of
arrows of $Q$.
\end{itemize}
The \emph{Jacobian algebra} $J(Q,W)$ of $(Q,W)$ is defined as the $0$-th cohomology of $\Gamma(Q,W)$, that is,
\[
J(Q,W):=kQ/(\del_aW\colon a\in Q_1).
\]
Note that $\Gamma(Q,W)$ is concentrated in non-positive degrees, so there is a projection map $\Gamma(Q,W)\to J(Q,W)$, which is a surjective homomorphism of dg algebras.

\subsection{Cluster categories}
Let $(Q,W)$ be a quiver with potential. The \emph{cluster category} \cite{Amiot09, Plamondon11} of $(Q,W)$ is defined as the triangle quotient
\[\cc(Q,W):=\per(\Gamma(Q,W))/\cd_{fd}(\Gamma(Q,W)).\]

Assume that $J(Q,W)$ is right noetherian as a ring. Then $\cd_{fg}(\Gamma(Q,W))$ is a triangulated subcategory of $\cd(\Gamma(Q,W))$. Assume further that $\cd_{fg}(\Gamma(Q,W))\subseteq \per(\Gamma(Q,W))$. Define the \emph{small cluster category} of $(Q,W)$ as the triangle quotient
\[
\cc^s(Q,W):=\per(\Gamma(Q,W))/\cd_{fg}(\Gamma(Q,W)).
\]
If $J(Q,W)$ is finite-dimensional, then $\cd_{fg}(\Gamma(Q,W))=\cd_{fd}(\Gamma(Q,W))$, and $\cc^s(Q,W)=\cc(Q,W)$. In general, $\cd_{fd}(\Gamma(Q,W))$ is a proper subcategory of $\cd_{fg}(\Gamma(Q,W))$ and $\cc^s(Q,W)$ is a proper triangle quotient of $\cc(Q,W)$. 

\subsection{3-Calabi--Yau Jacobian algebras}

A dg $k$-algebra $A$ is said to be \emph{bimodule 3-Calabi--Yau} if it is homologically smooth and there is an isomorphism in $\cd(A^{op}\ten_k A)$
\[
\RHom_{A^{op}\ten_k A}(A,A^{op}\ten_k A)\cong \Sigma^{-3}A.
\]
In the original definition of Ginzburg in \cite{Ginzburg06}, the above isomorphism is required to be self-dual. But this turns out to be automatic, see \cite[Appendix 14]{VandenBergh15}.

\begin{theorem}[{\cite[Theorem 6.3]{Keller11}}]
The Ginzburg dg algebra $\Gamma(Q,W)$ of a quiver with potential $(Q,W)$ is bimodule $3$-Calabi--Yau.
\end{theorem}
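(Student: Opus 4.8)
The plan is to exhibit an explicit, self-dual, finite bimodule resolution of $\Gamma:=\Gamma(Q,W)$ over its enveloping dg algebra $\Gamma^e:=\Gamma\op\ten_k\Gamma$, following Ginzburg \cite{Ginzburg06} with the sign corrections of \cite{VandenBergh15}. (One could instead identify $\Gamma(Q,W)$ with the deformed $3$-Calabi--Yau completion of the hereditary algebra $kQ$ with respect to the potential $W$ and invoke the general statement that deformed $n$-Calabi--Yau completions of homologically smooth dg algebras are bimodule $n$-Calabi--Yau, but that general statement is proved by the very same resolution, so I will carry out the hands-on argument.) Write $R=kQ_0$ for the separable subalgebra spanned by the trivial paths and set $V_0=R$; let $V_1$ be the $R$-bimodule spanned by the arrows of $Q$ (in degree $0$), $V_2$ the $R$-bimodule spanned by the reversed arrows $a^*$ (in degree $-1$), and $V_3$ the one spanned by the loops $t_i$ (in degree $-2$), so that $V_2\cong\Sigma\Hom_R(V_1,R)$ and $V_3\cong\Sigma^2 R$. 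The first step is to write down the complex of dg $\Gamma^e$-modules
\[
P:\quad \Gamma\ten_R V_3\ten_R\Gamma\xrightarrow{\,d_3\,}\Gamma\ten_R V_2\ten_R\Gamma\xrightarrow{\,d_2\,}\Gamma\ten_R V_1\ten_R\Gamma\xrightarrow{\,d_1\,}\Gamma\ten_R V_0\ten_R\Gamma\xrightarrow{\,\mu\,}\Gamma,
\]
where $\mu$ is the multiplication, $d_1(1\ten a\ten 1)=a\ten 1-1\ten a$, $d_3$ is assembled from the elements $e_i(\sum_a[a,a^*])e_i=d(t_i)$, and $d_2$ is assembled from the Hessian $(\del_a\del_b W)_{a,b}$ together with the correction terms forced by the requirement that the total complex $\Tot(P)$, which also carries the internal differential inherited from $\Gamma$, be a genuine complex of $\Gamma^e$-modules. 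Since $R$ is separable over $k$ and each $V_j$ is finite-dimensional, every term of $P$ is a direct summand of a finite direct sum of degree shifts of $\Gamma^e$, hence lies in $\per(\Gamma^e)$.

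The second step is to prove that the augmentation $\Tot(P)\to\Gamma$ is a quasi-isomorphism of dg $\Gamma^e$-modules; then $\Tot(P)$ is an $\ch$-projective resolution of $\Gamma$ over $\Gamma^e$, and in particular $\Gamma\in\per(\Gamma^e)$, i.e.\ $\Gamma$ is homologically smooth. As in the proof of Lemma~\ref{lem:acyclicity-of-derived-preprojective-algebra}, I would put the Adams grading on $\Gamma$ for which the arrows of $Q$ and their duals $a^*$ have weight $1$ and the loops $t_i$ have weight $2$, filter $P$ accordingly, and observe that the associated graded complex is the analogous resolution of the \emph{undeformed} Ginzburg algebra $\Gamma(Q,0)$ (the $3$-Calabi--Yau completion of $kQ$); acyclicity there reduces, via a mapping-cone construction, to exactness of the standard bimodule resolution $0\to kQ\ten_R V_1\ten_R kQ\to kQ\ten_R kQ\to kQ\to 0$ of the hereditary algebra $kQ$, and a spectral-sequence argument transfers acyclicity to $\Gamma$.

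The third step is to compute $\RHom_{\Gamma^e}(\Gamma,\Gamma^e)\cong\cHom_{\Gamma^e}(\Tot(P),\Gamma^e)$ termwise. By finiteness of the $V_j$ and separability of $R$ one has $\cHom_{\Gamma^e}(\Gamma\ten_R V_j\ten_R\Gamma,\Gamma^e)\cong\Gamma\ten_R\Hom_R(V_j,R)\ten_R\Gamma$, and under the self-dualities identifying $\Hom_R(V_j,R)$ with a degree shift of $V_{3-j}$ for $j=0,1,2,3$ --- which is exactly why the reversed arrows $a^*$ and the loops $t_i$ were introduced into $\tilde{Q}$ --- the resulting complex becomes isomorphic to $\Sigma^{-3}\Tot(P)$. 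The crucial point here is that the transpose of $d_2$ equals $d_2$ up to the prescribed sign, which holds because the Hessian matrix $(\del_a\del_b W)$ is symmetric by the cyclic invariance of $W$, and likewise the transpose of $d_1$ matches $d_3$. Combining this with the second step yields $\RHom_{\Gamma^e}(\Gamma,\Gamma^e)\cong\Sigma^{-3}\Tot(P)\simeq\Sigma^{-3}\Gamma$ in $\cd(\Gamma^e)$, which is the claim.

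The main obstacle is the sign bookkeeping in the first and third steps: one has to choose the signs in $d_2$ so that $\Tot(P)$ is simultaneously a complex and $d_2$ is (anti)self-transpose, and this must be done in harmony with the internal differential of $\Gamma$. This is precisely the issue settled in \cite[Appendix 14]{VandenBergh15}, which in addition shows that the resulting $3$-Calabi--Yau structure is automatically self-dual, so that the stronger form of Ginzburg's original definition holds as well.
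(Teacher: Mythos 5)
The paper itself offers no proof of this statement: it is imported verbatim from \cite[Theorem 6.3]{Keller11}, with the self-duality of the Calabi--Yau structure delegated to \cite[Appendix 14]{VandenBergh15}. Your outline is in substance the proof given in those sources (Ginzburg's explicit self-dual bimodule resolution, with Van den Bergh's sign corrections), so you are following essentially the same route as the proof the paper cites; the alternative you mention, via Keller's deformed $3$-Calabi--Yau completions, indeed rests on the very same resolution. Your steps 1 and 3 are the standard argument and are fine in outline: the terms $\Gamma\ten_R V_j\ten_R\Gamma$ are perfect because $R=kQ_0$ is separable and the $V_j$ are finite-dimensional, and the bimodule dual of the resolution is identified with its third shift using $V_2\cong\Sigma V_1^{\vee}$, $V_3\cong\Sigma^2R$, the symmetry of the Hessian coming from cyclic invariance of $W$, and the matching of the dual of $d_1$ with $d_3$ (here the dual you need is the $R$-bimodule dual $\Hom_{R\ten R^{op}}(V_j,R\ten R^{op})$ rather than $\Hom_R(V_j,R)$, which is harmless since $R$ is a finite product of copies of $k$).

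The one genuine weak point is your justification of step 2. The Adams-weight filtration with $\mathrm{gr}$ equal to the undeformed algebra $\Gamma(Q,0)$ works only if every cycle occurring in $W$ has length at least $3$: for a $2$-cycle, $\partial_aW$ has the same weight as $a^*$, so the associated graded is not $\Gamma(Q,0)$, and for a loop occurring linearly in $W$ the differential does not even preserve the filtration; the theorem, however, is stated for arbitrary finite potentials. Fortunately the spectral sequence is unnecessary. Since $\Gamma=(T_R\tilde V,d)$ with $\tilde V=V_1\oplus V_2\oplus V_3$ is a dg tensor algebra, your complex $\Tot(P)$ is, up to regrading, the cone form of the canonical sequence $0\to\Gamma\ten_R\tilde V\ten_R\Gamma\to\Gamma\ten_R\Gamma\to\Gamma\to 0$, where the differential on the first term is induced by the universal derivation (this is exactly where your ``Hessian plus correction terms'' come from); this sequence is exact already as a sequence of graded modules, for any tensor algebra and independently of $d$ and $W$, so acyclicity of the augmented $\Tot(P)$ requires no reduction to $W=0$. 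Two further remarks: the delicate sign bookkeeping in steps 1 and 3 is precisely what \cite[Appendix 14]{VandenBergh15} and the correction \cite{Keller11c} settle, so deferring it there is legitimate; and note that the paper's displayed definition uses the completed path algebra $\widehat{k\tilde Q}$, whereas your argument (like Keller's Theorem 6.3) concerns the non-complete Ginzburg algebra --- the complete case requires completed tensor products throughout.
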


A quiver with potential is said to be \emph{positively graded} if the quiver is graded with all arrows in positive degrees such that the potential is homogeneous. 

\begin{theorem}[{\cite[Corollary 5.4.3]{Ginzburg06}}] \label{thm:Jacobian-CY==>Ginzburg-formal}
Assume $k=\mathbb{C}$.
Let $(Q,W)$ be a positively graded quiver with potential. Then the following conditions are equivalent:
\begin{itemize}
\item[(i)] $J(Q,W)$ is bimodule 3-Calabi--Yau,
\item[(ii)] the projection $\Gamma(Q,W)\to J(Q,W)$ is a quasi-isomorphism.
\end{itemize}
\end{theorem}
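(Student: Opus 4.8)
The plan is to treat the two implications separately: $(ii)\Rightarrow(i)$ is immediate from the theorem of Keller quoted just above, and $(i)\Rightarrow(ii)$ carries all the content.

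\emph{The implication $(ii)\Rightarrow(i)$.} By Keller's theorem $\Gamma(Q,W)$ is bimodule $3$-Calabi--Yau. Since $k=\mathbb{C}$ is a field, a quasi-isomorphism $\Gamma(Q,W)\to J(Q,W)$ of dg algebras induces a quasi-isomorphism of the enveloping dg algebras and hence a triangle equivalence $\cd(\Gamma(Q,W)\op\ten\Gamma(Q,W))\simeq\cd(J(Q,W)\op\ten J(Q,W))$ carrying $\Gamma(Q,W)$ to $J(Q,W)$ and the enveloping algebra of $\Gamma(Q,W)$ to that of $J(Q,W)$. Homological smoothness and the isomorphism $\RHom_{A\op\ten A}(A,A\op\ten A)\cong\Sigma^{-3}A$ are therefore transported from $A=\Gamma(Q,W)$ to $A=J(Q,W)$, so $J(Q,W)$ is bimodule $3$-Calabi--Yau. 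I would spell out only the transport of the dualising isomorphism; the rest is routine.

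\emph{Setup for $(i)\Rightarrow(ii)$.} Write $J=J(Q,W)$ and $R=kQ_0$. Attached to $(Q,W)$ there is a tautological length-three complex of $J$-bimodules
\[
\mathbf{P}(Q,W)\colon\quad 0\longrightarrow P_3\longrightarrow P_2\longrightarrow P_1\longrightarrow P_0\longrightarrow 0,
\]
with $P_0=J\ten_R J$, $P_1=J\ten_R kQ_1\ten_R J$, $P_2=J\ten_R(kQ_1)^{\vee}\ten_R J$ (the dual $R$-bimodule, accounting for the arrows $a^*$) and $P_3=J\ten_R J$; the differentials are induced respectively by $x\ten a\ten y\mapsto xa\ten y-x\ten ay$, by the second cyclic derivatives $\del_b\del_a W$, and by the relations $\sum_a[a,a^*]$, and the augmentation $P_0\to J$ is the multiplication. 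By construction this is a complex, it is self-dual up to the shift by $3$ together with a degree twist by $\deg W$ coming from the grading, and its augmentation gives $H_0(\mathbf{P}(Q,W))\cong J$. I would first establish the dictionary that $\Gamma(Q,W)\to J(Q,W)$ is a quasi-isomorphism if and only if $\mathbf{P}(Q,W)$ is acyclic, i.e.\ a projective bimodule resolution of $J$. This is the technical heart and the step I expect to be the main obstacle: $\mathbf{P}(Q,W)$ is obtained from Keller's self-dual bimodule resolution of the dg algebra $\Gamma(Q,W)$ by applying $J\lten_{\Gamma(Q,W)}(-)\lten_{\Gamma(Q,W)}J$, and one has to control the interaction between the internal differential of that resolution and the differential of $\Gamma(Q,W)$; I would either invoke \cite{Ginzburg06,Keller11} for this or carry out the comparison by hand.

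\emph{Proof of $(i)\Rightarrow(ii)$, granting the dictionary.} Assume $J$ is bimodule $3$-Calabi--Yau. Since $(Q,W)$ is positively graded, $J=\bigoplus_{m\geq0}J_m$ is non-negatively graded with $J_0=R$ semisimple, hence connected graded over $R$; being homologically smooth it admits a minimal graded projective bimodule resolution $\mathbf{Q}$, and the $3$-Calabi--Yau property forces $\mathbf{Q}$ to have length three and to be self-dual up to the shift by $3$ and a degree twist. The two lowest terms of a minimal resolution are forced: $\mathbf{Q}_0=J\ten_R J$ and $\mathbf{Q}_1=J\ten_R\overline{\Tor}^J_1(R,R)\ten_R J=J\ten_R kQ_1\ten_R J$, the degree-one generators being exactly the arrows of $Q$. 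Self-duality together with length three then forces $\mathbf{Q}_3\cong J\ten_R J$ (the $R$-dual of $\mathbf{Q}_0$, using that $R$ is semisimple and self-dual) and $\mathbf{Q}_2\cong J\ten_R(kQ_1)^{\vee}\ten_R J$ twisted by $\deg W$; in particular $\overline{\Tor}^J_2(R,R)$ has the same dimension as $kQ_1$ and its homogeneous components sit in the same degrees as the $\del_a W$, $a\in Q_1$. As $J=kQ/(\del_aW\colon a\in Q_1)$ by definition, these relations span the space of minimal relations and, by the dimension count, form a basis of it; a direct inspection then identifies the differentials of $\mathbf{Q}$ with those of $\mathbf{P}(Q,W)$. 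Hence $\mathbf{P}(Q,W)=\mathbf{Q}$ is acyclic, and by the dictionary $\Gamma(Q,W)\to J(Q,W)$ is a quasi-isomorphism. Besides the dictionary, the one delicate point is matching the differentials of the abstract minimal resolution with those prescribed by $W$, not merely the ranks of its terms; this rests on the fact that the minimal relations of a potential algebra can be represented by cyclic derivatives, which self-duality pins down, and it is here that the hypothesis $k=\mathbb{C}$ is convenient.
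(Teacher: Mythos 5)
This statement is not proved in the paper at all: it is quoted, with attribution, as \cite[Corollary 5.4.3]{Ginzburg06}, so there is no internal argument to compare yours with. What you propose is essentially a reconstruction of the standard Ginzburg--Bocklandt argument: $(ii)\Rightarrow(i)$ by Keller's theorem that $\Gamma(Q,W)$ is bimodule $3$-Calabi--Yau together with invariance of smoothness and of the inverse dualising bimodule under quasi-isomorphism (this direction is fine as you describe it), and $(i)\Rightarrow(ii)$ via the self-dual length-three complex $\mathbf{P}(Q,W)$ of $J$-bimodules and minimal graded resolutions.

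Two points in your $(i)\Rightarrow(ii)$ argument need attention. First, your identification of the minimal graded bimodule resolution with $\mathbf{P}(Q,W)$ silently assumes that the degree-one generators of $J=J(Q,W)$ are exactly the arrows of $Q$, i.e.\ that no cyclic derivative $\del_aW$ has a linear term (equivalently, $W$ has no quadratic part). If $W$ contains a $2$-cycle, then $\mathbf{P}(Q,W)$ is visibly non-minimal (its middle differential has unit entries), the quiver of $J$ is strictly smaller than $Q$, and both the dimension count and the conclusion ``$\mathbf{P}(Q,W)$ equals the minimal resolution'' fail, even though the theorem itself remains true; so you should either assume all cycles of $W$ have length at least $3$ (harmless for the applications in this paper, where $W$ is cubic) or insert a reduction step eliminating the quadratic part on both sides of the equivalence. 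Second, the two steps you defer are precisely the content of Ginzburg's corollary, so invoking \cite{Ginzburg06} for them is circular: for the ``dictionary'' the non-circular input is Keller's explicit cofibrant bimodule resolution of $\Gamma(Q,W)$ (proved by Van den Bergh in the appendix of \cite{Keller11}), which identifies $\mathbf{P}(Q,W)$ with a complex computing $J\lten_{\Gamma(Q,W)}J$; and the implication ``$\mathbf{P}(Q,W)$ exact $\Rightarrow$ $H^{-n}\Gamma(Q,W)=0$ for $n>0$'' is exactly where the positive (Adams) grading is needed, to make the inductive/spectral-sequence argument converge --- this should be made explicit, since it is the role of the hypothesis ``positively graded''. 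Likewise, for the top differential a rank count is not enough: one needs a comparison map $\mathbf{P}(Q,W)\to\mathbf{Q}$ lifting $\mathrm{id}_J$, an isomorphism in homological degrees $0,1,2$ by minimality of generators and relations (where your argument that the $\del_aW$ form a basis of $\Tor_2$ is fine once the first point is fixed), and an isomorphism in degree $3$ obtained from the Calabi--Yau pairing; ``direct inspection'' hides exactly this step.
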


\subsection{3D quotient singularities}
Assume $k=\mathbb{C}$.

Let $G\subseteq \mathrm{SL}_3(\mathbb{C})$ be a finite subgroup. Then $G$ naturally acts on $\mathbb{C}^3$ and on its coordinate ring $\mathbb{C}[x,y,z]$. Let $R=\mathbb{C}[x,y,z]^G$ be the subalgebra of $G$-invariant elements.

Let $Q$ be the McKay quiver of $G$ with $0$ the vertex corresponding to the trivial representation of $G$.

\begin{theorem}[{\cite[Theorem 4.4.6]{Ginzburg06}}]
There is a potential $W$ such that 
\begin{itemize}
\item[-] $J(Q,W)$ is bimodule 3-Calabi--Yau,
\item[-] $e_0 J(Q,W)e_0\cong R$,
\item[-] $J(Q,W)$ is isomorphic to the endomorphism algebra of a maximal Cohen--Macaulay $R$-module $M$ which has $R$ as a direct summand and $e_0=\mathrm{id}_R$ up to this isomorphism.
\end{itemize}
\end{theorem}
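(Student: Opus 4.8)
The statement is \cite[Theorem 4.4.6]{Ginzburg06}, and the plan is to recall its proof. The starting observation is that the polynomial ring itself has a quiver-with-potential presentation: writing $S=\C[x,y,z]$ and letting $Q_0$ be the quiver with one vertex and three loops $x,y,z$ and potential $W_0=xyz-xzy$, the cyclic derivatives $\del_x W_0,\del_y W_0,\del_z W_0$ are, up to sign, the three commutators $[y,z],[z,x],[x,y]$, so $J(Q_0,W_0)=S$. Moreover $S$ is bimodule $3$-Calabi--Yau, being the coordinate ring of smooth $\C^3$, and in fact $\Gamma(Q_0,W_0)\to S$ is a quasi-isomorphism, which is the polynomial case of Theorem~\ref{thm:Jacobian-CY==>Ginzburg-formal}.

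First I would pass to the $G$-equivariant picture, in which the role of $S$ is played by the skew group algebra $S\rtimes G$. Since $G\subseteq\mathrm{SL}_3(\C)$, the cubic tensor $W_0$ (the image of a generator of $\Lambda^3 V^*$, where $V=\C^3$ is the given representation) is $G$-invariant; following \cite[Section 4.4]{Ginzburg06} one transports it along the isotypic decompositions of $S$ and of $V\ten S$ indexed by the irreducible representations of $G$, which are the vertices of the McKay quiver $Q$. This produces a potential $W$ on $Q$ --- a combination of $3$-cycles, assembled from $W_0$ and from the Clebsch--Gordan maps $\Hom_{\C G}(\rho',V\ten\rho)$ that label the arrows of $Q$ --- together with an algebra isomorphism $J(Q,W)\cong\End_R(M)$, where $R=S^G$ and $M=\bigoplus_{\rho}\Hom_{\C G}(\rho,S)$. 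The summand of $M$ coming from the trivial representation is $S^G=R$, so $R$ is a direct summand of $M$; the vertex idempotent $e_0$ of $J(Q,W)$ at the trivial representation corresponds to $\mathrm{id}_R$, and $e_0 J(Q,W)e_0\cong\End_R(R)=R$.

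For the Calabi--Yau property I would note that $\End_R(M)$ is Morita equivalent to $S\rtimes G$ --- concretely it is $e(S\rtimes G)e$ for a full idempotent $e\in\C G$ choosing one primitive idempotent from each isotypic block, so that $(S\rtimes G)e(S\rtimes G)=S\rtimes G$ --- that $S\rtimes G$ is bimodule $3$-Calabi--Yau because $S$ is and $|G|$ is invertible in $\C$, and that bimodule $n$-Calabi--Yauness is preserved under Morita equivalence; hence $J(Q,W)\cong\End_R(M)$ is bimodule $3$-Calabi--Yau. For the maximal Cohen--Macaulay claim, $S$ is a maximal Cohen--Macaulay $R$-module (it is Cohen--Macaulay of Krull dimension $3=\dim R$ and module-finite over $R$), and the isotypic decomposition $S\cong\bigoplus_\rho\Hom_{\C G}(\rho,S)^{\oplus\dim\rho}$ of $S$ as an $R$-module exhibits each $\Hom_{\C G}(\rho,S)$ as a direct summand of $S$; hence $M$ is maximal Cohen--Macaulay and, as noted, contains $R$ as a direct summand.

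The step I expect to be the main obstacle is the explicit construction of $W$ on the McKay quiver together with the verification that $J(Q,W)\cong\End_R(M)$ --- equivalently, that $S\rtimes G$ is, up to Morita equivalence, the Jacobian algebra of a quiver with potential at all. Once this identification is in place, the Calabi--Yau bookkeeping and the Cohen--Macaulay statements are standard. All of this is carried out in \cite[Section 4.4]{Ginzburg06}.
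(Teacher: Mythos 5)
The paper offers no proof of this statement---it is quoted verbatim from \cite[Theorem 4.4.6]{Ginzburg06}---and your outline is a correct reconstruction of Ginzburg's argument, deferring, as the paper does, to \cite[Section 4.4]{Ginzburg06} for the one genuinely hard step, namely the construction of $W$ on the McKay quiver and the identification $J(Q,W)\cong e(S\rtimes G)e\cong\End_R(M)$. The only point left tacit in your sketch is that the isomorphism $e(S\rtimes G)e\cong\End_R(M)$ rests on Auslander's theorem $S\rtimes G\cong\End_R(S)$, which applies because a finite subgroup of $SL_3(\C)$ contains no non-trivial pseudo-reflections; everything else (the one-vertex presentation $J(Q_0,W_0)=\C[x,y,z]$, the $G$-invariance of the cubic form, Morita invariance of the bimodule Calabi--Yau property, and the maximal Cohen--Macaulay claim via the isotypic decomposition of $S$ over $R$) is accurate.
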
 

We refer to \cite[Section 4.4]{Ginzburg06} for the detailed construction of $(Q,W)$. By construction $W$ is a linear combination of cycles of length $3$. So putting all arrows of $Q$ in degree $1$ makes $(Q,W)$ a positively graded quiver with potential. Therefore by Theorem~\ref{thm:Jacobian-CY==>Ginzburg-formal}, the projection $\Gamma(Q,W)\to J(Q,W)$ is a quasi-isomorphism of dg algebras. Let $(Q_G,W_G)$ be the quiver with potential obtained from $(Q,W)$ by deleting the vertex $0$.

\begin{theorem}\label{thm:singularity-category-of-quotient-singularity}
There is a triangle equivalence $\cd_{sg}(R)\simeq \cc^s(Q_G,W_G)$ up to direct summands. If $\cd_{sg}(R)$ is idempotent complete, then this is a triangle equivalence.
\end{theorem}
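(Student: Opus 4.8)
The plan is to realise $R$ as the base of a non-commutative resolution whose dg model, in the sense of Section~\ref{s:relative-singularity-category}, is a vertex deletion of the Ginzburg dg algebra $\Gamma(Q,W)$, and then to recognise that deletion as $\Gamma(Q_G,W_G)$.

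First I would put $A=J(Q,W)$ and take $e=e_0$, the trivial path at the vertex $0$. By the theorem of Ginzburg quoted above, $A\cong\End_R(M)$ with $M$ a maximal Cohen--Macaulay module containing $R$ as a direct summand, $eAe\cong R$, and $e=\id_R$ under this isomorphism; moreover $A$ has finite global dimension, being Morita equivalent to the skew group algebra $\mathbb{C}[x,y,z]\rtimes G$, which has global dimension $3$ because $\operatorname{char}k=0$. Writing $M=R\oplus M'$, the algebra $A$ is thus an NCR of the noetherian ring $R=\mathbb{C}[x,y,z]^G$, and $e$ is the idempotent of the summand $R$. Corollary~\ref{cor:dg-algebra-B-for-NCR} then provides the dg algebra $B$ of Proposition~\ref{p:recollement-from-projective-general-case} together with a triangle equivalence up to direct summands $\cd_{sg}(R)\simeq\per(B)/\cd_{fg}(B)$, which is a genuine equivalence as soon as $\cd_{sg}(R)$ is idempotent complete.

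Next I would identify $B$. Since $W$ is a finite linear combination of $3$-cycles and $Q$ is finite, $\Gamma(Q,W)$ is a dg quiver algebra in the sense of Section~\ref{ss:dg-quiver-algebra} (its completion being superfluous here). Grading all arrows of $Q$ in degree $1$ makes $(Q,W)$ a positively graded quiver with potential, and since $J(Q,W)$ is bimodule $3$-Calabi--Yau, Theorem~\ref{thm:Jacobian-CY==>Ginzburg-formal} shows the projection $\rho\colon\Gamma(Q,W)\to J(Q,W)=A$ is a quasi-isomorphism, with $e$ the image of the trivial path $e_0$; hence Theorem~\ref{t:construction-with-noncomplete-cofibrant-model} gives that $B$ is quasi-equivalent to $\Gamma(Q,W)/\Gamma(Q,W)e_0\Gamma(Q,W)$. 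I would then prove the identity $\Gamma(Q,W)/\Gamma(Q,W)e_0\Gamma(Q,W)=\Gamma(Q_G,W_G)$ from the description of vertex deletion in Section~\ref{ss:deleting-a-vertex}: deleting $0$ leaves precisely the arrows of $Q_G$, the arrows $a^*$ for $a\in(Q_G)_1$, and the loops $t_i$ for $i\ne 0$, which is the graded quiver of $\Gamma(Q_G,W_G)$; and discarding the summands along paths through $0$ turns $\del_a W$ into $\del_a W_G$ for $a\in(Q_G)_1$ (every decomposition $W=uav$ of a cycle through $0$ yields a path $vu$ through $0$, while $\del_a W_G$ already avoids $0$) and turns $e_i(\sum_a[a,a^*])e_i$ into $e_i(\sum_{a\in(Q_G)_1}[a,a^*])e_i$ for $i\ne 0$. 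So $B$ is quasi-equivalent to $\Gamma(Q_G,W_G)$.

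Finally, because $\per$ and $\cd_{fg}$ are preserved under quasi-equivalence (the remark following Lemma~\ref{lem:B-in-homotopy-category-of-dg-algebras}), the equivalence above becomes $\cd_{sg}(R)\simeq\per(\Gamma(Q_G,W_G))/\cd_{fg}(\Gamma(Q_G,W_G))$ up to direct summands, i.e.\ $\cd_{sg}(R)\simeq\cc^s(Q_G,W_G)$ up to direct summands; here $\cc^s(Q_G,W_G)$ is well defined since $J(Q_G,W_G)\cong H^0(B)\cong A/AeA=\underline{\End}_R(M)$ is module-finite over $R$ hence noetherian, and $\cd_{fg}(\Gamma(Q_G,W_G))\subseteq\per(\Gamma(Q_G,W_G))$ by Proposition~\ref{p:finitely-generated-are-perfect}(a) — its hypotheses holding because $A$ has finite global dimension, so that every finitely generated $A/AeA$-module has finite projective dimension over $A$ — and Corollary~\ref{cor:dg-algebra-B-for-NCR}(d) upgrades the equivalence to a genuine one when $\cd_{sg}(R)$ is idempotent complete. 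The one step that will need genuine care is the vertex-deletion identity of the third paragraph, namely checking that the purely combinatorial operation of Section~\ref{ss:deleting-a-vertex} reproduces the Ginzburg dg algebra $\Gamma(Q_G,W_G)$ on the nose, i.e.\ that the cyclic derivatives and the $t_i$-differentials transform compatibly; everything else is an assembly of results already established in the paper.
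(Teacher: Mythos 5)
Your proposal is correct and follows essentially the same route as the paper: set $A=J(Q,W)$, $e=e_0$, invoke the relative singularity category machinery (your use of Corollary~\ref{cor:dg-algebra-B-for-NCR} is just the NCR repackaging of Theorem~\ref{t:singularity-category-vs-relative-singularity-category}, which the paper cites directly), use Theorem~\ref{thm:Jacobian-CY==>Ginzburg-formal} to get formality of $\Gamma(Q,W)$, and then Theorem~\ref{t:construction-with-noncomplete-cofibrant-model} to identify $B$ with the vertex-deleted Ginzburg algebra $\Gamma(Q_G,W_G)$. The extra details you supply (the explicit check that deletion of the vertex $0$ reproduces $\Gamma(Q_G,W_G)$, and the verification that $\cc^s(Q_G,W_G)$ is well defined) are correct and merely fill in steps the paper leaves implicit.
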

\begin{proof}
Let $A=J(Q,W)$, $e=e_0$ and $B$ be the dg algebra obtained in Proposition~\ref{p:recollement-from-projective-general-case}. Then $eAe\cong R$ and $A$ has global dimension $3$ by \cite[Corollary 5.3.3]{Ginzburg06}. So by Theorem~\ref{t:singularity-category-vs-relative-singularity-category}, there is a triangle equivalence $\cd_{sg}(R)\simeq \per(B)/\cd_{fg}(B)$ up to direct summands, which is a triangle equivalence if $\cd_{sg}(R)$ is idempotent complete. Since the projection $\Gamma(Q,W)\to J(Q,W)$ is a quasi-isomorphism of dg algebras, it follows by Theorem~\ref{t:construction-with-noncomplete-cofibrant-model} that we can take $B=\Gamma(Q,W)/\Gamma(Q,W)e_0\Gamma(Q,W)$, which is isomorphic to $\Gamma(Q_G,W_G)$.
\end{proof}

Assume that $G$ is a cyclic group with generator $g=\mathrm{diag}(\zeta^{a_1},\zeta^{a_2},\zeta^{a_3})$, where $\zeta$ is a primitive $n$-th root of unity and $a_1,a_2,a_3$ are integers satisfying $0<a_j<n$ and $(n,a_j)=1$ for $j=1,2,3$. According to \cite[Corollary 5.3]{AmiotIyamaReiten15}, there exists a finite-dimensional algebra $\underline{A}$ of global dimension at most $2$ such that $\cd_{sg}(R)$ is triangle equivalent to the generalised $2$-cluster category $\cc_2(\underline{A})$ of $\underline{A}$. By \cite[Theorem 6.12(a)]{Keller11}, there is a quiver with potential $(Q',W')$ such that $\cc_2(\underline{A})$, and hence $\cd_{sg}(R)$, is triangle equivalent to $\cc(Q',W')$. As a consequence of Theorem~\ref{thm:singularity-category-of-quotient-singularity}, we obtain such an equivalence for all isolated $SL_3(\mathbb{C})$-quotient singularities. The `complete' version of this result is a consequence of \cite[Proposition 1.2]{deVolcseyVandenBergh16}.

\begin{corollary} \label{cor:singularity-category-for-isolated-quotient-singularities}
Assume that $R$ has isolated singularity. 
Then there is a triangle equivalence $\cd_{sg}(R)\simeq \cc(Q_G,W_G)$.
\end{corollary}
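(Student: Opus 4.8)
The plan is to combine the main equivalence of Theorem~\ref{thm:singularity-category-of-quotient-singularity} with the collapse of the ``small'' cluster category into the ordinary cluster category under the isolated-singularity hypothesis. By Theorem~\ref{thm:singularity-category-of-quotient-singularity} there is, up to direct summands, a triangle equivalence
\[
\cd_{sg}(R)\simeq\cc^s(Q_G,W_G)=\per(\Gamma(Q_G,W_G))/\cd_{fg}(\Gamma(Q_G,W_G)),
\]
and moreover it is an honest triangle equivalence once $\cd_{sg}(R)$ is known to be idempotent complete. So the proof breaks into two independent tasks: (i) show $\cd_{sg}(R)$ is idempotent complete, so that the equivalence of Theorem~\ref{thm:singularity-category-of-quotient-singularity} is genuine; and (ii) show $\cc^s(Q_G,W_G)=\cc(Q_G,W_G)$, \ie that $\cd_{fg}(\Gamma(Q_G,W_G))=\cd_{fd}(\Gamma(Q_G,W_G))$.

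For (i): $R=\C[x,y,z]^G$ is a Gorenstein $\C$-algebra (it is a normal domain that is a direct summand of the polynomial ring, and $G\subset SL_3$ forces Gorenstein by Watanabe's theorem), and it carries the natural grading inherited from $\C[x,y,z]$ with $R^0=\C$. Since $R$ has isolated singularity by hypothesis, the singularity is isolated at the graded maximal ideal $\m=\bigoplus_{p>0}R^p$. Hence Theorem~\ref{thm:idempotent-completeness-and-Hom-finiteness-of-singularity-category} applies directly and gives that $\cd_{sg}(R)$ is idempotent complete (and Hom-finite). This upgrades the equivalence of Theorem~\ref{thm:singularity-category-of-quotient-singularity} to $\cd_{sg}(R)\simeq\cc^s(Q_G,W_G)$ on the nose.

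For (ii): as noted in the subsection on cluster categories, $\cc^s(Q,W)=\cc(Q,W)$ as soon as $\cd_{fg}(\Gamma(Q,W))=\cd_{fd}(\Gamma(Q,W))$, which by the discussion there holds when $J(Q,W)$ is finite-dimensional. So I would show $J(Q_G,W_G)$ is finite-dimensional. Recall $J(Q,W)\cong\End_R(M)$ for a maximal Cohen--Macaulay module $M$ containing $R$, with $e_0$ corresponding to the $R$-summand, and $Q_G,W_G$ are obtained by deleting the vertex $0$; thus $J(Q_G,W_G)$ is the quotient $J(Q,W)/J(Q,W)e_0J(Q,W)$, which is the stable endomorphism algebra $\ul{\End}_R(M)$ (compare Corollary~\ref{cor:dg-algebra-B-for-NCR}(b)). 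Since $R$ has isolated singularity, $\End_R(M)$ is (a noncommutative crepant-type resolution and in particular) such that $\ul{\End}_R(M)$ is finite-dimensional over $\C$ --- equivalently, $H^0(B)=A/Ae_0A$ is finite-dimensional, which is exactly the hypothesis under which $\per(B)$ is Hom-finite and $\cd_{fg}(B)=\cd_{fd}(B)$. Feeding this back, $\cc^s(Q_G,W_G)=\cc(Q_G,W_G)$, and combining with (i) yields the desired triangle equivalence $\cd_{sg}(R)\simeq\cc(Q_G,W_G)$.

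The main obstacle is task (ii), specifically pinning down cleanly \emph{why} $\ul{\End}_R(M)$ is finite-dimensional when $R$ has isolated singularity: the cleanest route is to observe that over a (complete) local ring with isolated singularity, $\ul{\End}_R(N)$ has finite length for every maximal Cohen--Macaulay module $N$ (a standard Auslander-type fact, \cite{Auslander86}), and that passing to the completion does not change the relevant stable categories; alternatively one invokes that $\cd_{sg}(R)$ being Hom-finite (from Theorem~\ref{thm:idempotent-completeness-and-Hom-finiteness-of-singularity-category}) forces $\End_{\cd_{sg}(R)}$ of every object to be finite-dimensional, hence $\cd_{fg}(\Gamma(Q_G,W_G))$ coincides with $\cd_{fd}$. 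Everything else is bookkeeping with the equivalences already established in the paper.
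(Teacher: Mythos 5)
Your proposal is correct and follows essentially the same route as the paper: use Theorem~\ref{thm:idempotent-completeness-and-Hom-finiteness-of-singularity-category} to get idempotent completeness (and Hom-finiteness) of $\cd_{sg}(R)$, apply Theorem~\ref{thm:singularity-category-of-quotient-singularity} to get $\cd_{sg}(R)\simeq\cc^s(Q_G,W_G)$, and then identify $J(Q_G,W_G)$ with the stable endomorphism algebra $\underline{\End}_R(M)$, whose finite-dimensionality (from the isolated singularity hypothesis, via \cite{Auslander86}) gives $\cc^s(Q_G,W_G)=\cc(Q_G,W_G)$. Your added justification of why $\underline{\End}_R(M)$ is finite-dimensional only fills in a step the paper leaves implicit.
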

\begin{proof}
Assume that $R$ has isolated singularity. Then $\cd_{sg}(R)$ is idempotent complete and Hom-finite by Theorem~\ref{thm:idempotent-completeness-and-Hom-finiteness-of-singularity-category}. Therefore by Theorem~\ref{thm:singularity-category-of-quotient-singularity} there is a triangle equivalence
\[
\cd_{sg}(R)\simeq \cc^s(Q_G,W_G).
\]
Moreover, $J(Q_G,W_G)=J(Q,W)/J(Q,W)e_0J(Q,W)$, the stable endomorphism algebra of $M$, is finite-dimensional, and $\cc^s(Q_G,W_G)=\cc(Q_G,W_G)$. 
\end{proof}

\subsection{Gorenstein affine toric threefolds}

Assume $k=\mathbb{C}$.

\begin{theorem}\label{thm:singularity-category-of-toric-threefold}
Let $X=\mathrm{Spec}R$ be a Gorenstein affine toric threefold. Then there is a quiver with potential $(Q',W')$ such that there is a triangle equivalence $\cd_{sg}(R)\simeq \cc^s(Q',W')$ up to direct summands. If $\cd_{sg}(R)$ is idempotent complete, then this is a triangle equivalence.
\end{theorem}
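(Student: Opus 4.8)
The plan is to run the proof of Theorem~\ref{thm:singularity-category-of-quotient-singularity} essentially verbatim, feeding in the toric analogue of the McKay-quiver construction. First I would record the relevant structure theory: by the theory of consistent dimer models (Broomhead~\cite{Broomhead12}; see also \cite[Section~6]{AmiotIyamaReiten15}), a Gorenstein affine toric threefold $R$ admits a non-commutative crepant resolution $A=J(Q,W)$, the Jacobian algebra of a brane tiling $(Q,W)$, with the following properties: $A$ has global dimension $3$; $A$ is bimodule $3$-Calabi--Yau; $A\cong\End_R(M)$ for a maximal Cohen--Macaulay $R$-module $M$ having $R$ as a direct summand; and, writing $e_0$ for the trivial path at the vertex $0$ of $Q$ corresponding to this summand, $e_0Ae_0\cong R$ under this isomorphism. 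Moreover the potential $W$ is a signed sum of the faces of the tiling, so picking a family of perfect matchings whose union is all of $Q_1$ equips $Q$ with a grading in strictly positive degrees for which $W$ is homogeneous; thus $(Q,W)$ is a positively graded quiver with potential.

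Since $J(Q,W)$ is bimodule $3$-Calabi--Yau, Theorem~\ref{thm:Jacobian-CY==>Ginzburg-formal} then shows that the projection $\Gamma(Q,W)\to J(Q,W)=A$ is a quasi-isomorphism of dg algebras. Next I would take $e=e_0$ and let $B$ be the dg algebra associated with $(A,e_0)$ by Proposition~\ref{p:recollement-from-projective-general-case}. As $k=\mathbb{C}$ is a field, $A$ is flat over $k$; $A$ is module-finite over the noetherian ring $R$, hence right noetherian, and so is $A/Ae_0A\cong\underline{\End}_R(M)$; and $\gldim A<\infty$ forces every finitely generated $A/Ae_0A$-module to have finite projective dimension over $A$. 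Hence Theorem~\ref{t:singularity-category-vs-relative-singularity-category} applies and gives a triangle equivalence up to direct summands
\[
\cd_{sg}(R)=\cd_{sg}(e_0Ae_0)\simeq\per(B)/\cd_{fg}(B),
\]
which is a genuine triangle equivalence as soon as $\cd_{sg}(R)$ is idempotent complete; note also that $\cd_{fg}(B)\subseteq\per(B)$ by Proposition~\ref{p:finitely-generated-are-perfect}, so that the small cluster category appearing below is defined.

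Finally, since $\Gamma(Q,W)\to A$ is a quasi-isomorphism of dg algebras and $e_0$ is the image of a trivial path of $\tilde{Q}$, Theorem~\ref{t:construction-with-noncomplete-cofibrant-model} allows me to choose $B=\Gamma(Q,W)/\Gamma(Q,W)e_0\Gamma(Q,W)$; deleting the vertex $0$ as in Section~\ref{ss:deleting-a-vertex}, this dg algebra is the Ginzburg dg algebra $\Gamma(Q',W')$ of the quiver with potential $(Q',W')$ obtained from $(Q,W)$ by removing the vertex $0$. Consequently
\[
\cd_{sg}(R)\simeq\per(\Gamma(Q',W'))/\cd_{fg}(\Gamma(Q',W'))=\cc^s(Q',W')
\]
up to direct summands, and this is a triangle equivalence whenever $\cd_{sg}(R)$ is idempotent complete. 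The main obstacle is the first step: extracting from the dimer-model literature a single clean statement valid for \emph{every} Gorenstein affine toric threefold, with no isolated-singularity hypothesis, that simultaneously supplies the Jacobian NCCR $A=J(Q,W)$ with $R$ a summand and $e_0Ae_0\cong R$, its bimodule $3$-Calabi--Yau property, and a positive grading on $(Q,W)$. Granting this input, everything else is a formal application of the recollement-and-dg-model machinery of Sections~\ref{s:relative-singularity-category} and~\ref{s:dg-model}, exactly as in the $SL_3(\mathbb{C})$-quotient case treated in Theorem~\ref{thm:singularity-category-of-quotient-singularity}.
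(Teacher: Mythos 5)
Your proposal follows the paper's proof essentially verbatim: the paper likewise extracts from Broomhead's dimer-model results (\cite[Theorems 8.6 and 8.5, Lemma 2.11]{Broomhead12}) a positively graded quiver with potential $(Q,W)$ with $J(Q,W)$ bimodule $3$-Calabi--Yau, $e_iJ(Q,W)e_i\cong R$ and $J(Q,W)\cong\End_R(e_iJ(Q,W))$, and then applies Theorems~\ref{thm:Jacobian-CY==>Ginzburg-formal}, \ref{t:singularity-category-vs-relative-singularity-category} and~\ref{t:construction-with-noncomplete-cofibrant-model} exactly as in the $SL_3(\mathbb{C})$-quotient case. The only discrepancy is your assertion that $e_0J(Q,W)$ is maximal Cohen--Macaulay over $R$: Broomhead (and the paper) only provide reflexivity, and the paper explicitly leaves MCM-ness open in the subsequent remark --- but since this stronger claim is never used in your argument, the proof stands as written.
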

\begin{proof}
Going through the proofs of \cite[Theorems 8.6 and 8.5]{Broomhead12}, we see that there is a quiver with potential $(Q,W)$ such that
\begin{itemize}
\item[-] $J(Q,W)$ is bimodule $3$-Calabi--Yau,
\item[-] for any vertex $i$ of $Q$, there is an isomorphism $e_i J(Q,W)e_i\cong R$, and the $R$-module $e_iJ(Q,W)$ is reflexive such that $J(Q,W)\cong \End_R(e_iJ(Q,W))$.
\end{itemize}
By \cite[Lemma 2.11]{Broomhead12}, there is a grading on $Q$ making $(Q,W)$ a positively graded quiver with potential. So by Theorem~\ref{thm:Jacobian-CY==>Ginzburg-formal}, the projection $\Gamma(Q,W)\to J(Q,W)$ is a quasi-isomorphism of dg algebras. Fix an arbitrary vertex $i$ of $Q$ and let $(Q',W')$ be the quiver with potential obtained from $(Q,W)$ by deleting $i$. The rest of the proof is the same as that of Theorem~\ref{thm:singularity-category-of-quotient-singularity}.
\end{proof}

For certain Gorenstein affine toric threefold $X=\mathrm{Spec}R$ with isolated singularities, it is shown in \cite[Theorem 6.3]{AmiotIyamaReiten15} that $\cd_{sg}(R)$ is triangle equivalent to the generalised 2-cluster category $\cc_2(\underline{A})$ of some finite-dimensional algebra $\underline{A}$ of global dimension at most $2$. By \cite[Theorem 6.12(a)]{Keller11}, there is a quiver with potential $(Q',W')$ such that $\cc_2(\underline{A})$, and hence $\cd_{sg}(R)$, is triangle equivalent to $\cc(Q',W')$. As a consequence of Theorem~\ref{thm:singularity-category-of-toric-threefold}, we obtain such an equivalence for all Gorenstein affine toric threefolds with isolated singularity.
\begin{corollary}\label{cor:singularity-category-for-toric-threefolds-with-isolated-singularity}
Let $X=\mathrm{Spec}R$ be a Gorenstein affine toric threefold with isolated singularity. Then there is a quiver with potential $(Q',W')$ such that there is a triangle equivalence $\cd_{sg}(R)\simeq \cc^s(Q',W')$.
\end{corollary}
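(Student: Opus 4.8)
The plan is to run the same argument as for Corollary~\ref{cor:singularity-category-for-isolated-quotient-singularities}: use the isolated-singularity hypothesis to make $\cd_{sg}(R)$ idempotent complete, so that the ``up to direct summands'' equivalence of Theorem~\ref{thm:singularity-category-of-toric-threefold} becomes an honest triangle equivalence $\cd_{sg}(R)\simeq\cc^s(Q',W')$, and then to observe that here the small cluster category in fact coincides with the ordinary one, $\cc^s(Q',W')=\cc(Q',W')$, matching the discussion preceding the corollary.

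The first step is to check that $R$ satisfies the hypotheses of Theorem~\ref{thm:idempotent-completeness-and-Hom-finiteness-of-singularity-category}. Writing $R=\mathbb{C}[\sigma^\vee\cap M]$ for a rational polyhedral cone $\sigma\subseteq N_{\mathbb{R}}$ with $\mathrm{rank}\,M=3$: if $\sigma$ were not full-dimensional then $\sigma^\vee\cap M$ would contain a non-trivial unit, $X$ would split off a torus factor $\mathbb{G}_m$, and the singular locus of $X$ would be positive-dimensional, contradicting the hypothesis. Hence $\sigma^\vee\cap M$ is a pointed affine semigroup, which equips $R$ with a grading $R=\bigoplus_{p\geq 0}R^p$ having $R^0=\mathbb{C}$, and the unique singular point is the torus-fixed point cut out by $\m=\bigoplus_{p>0}R^p$. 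Since $R$ is Gorenstein, Theorem~\ref{thm:idempotent-completeness-and-Hom-finiteness-of-singularity-category} shows that $\cd_{sg}(R)$ is idempotent complete (and Hom-finite). Combined with the last clause of Theorem~\ref{thm:singularity-category-of-toric-threefold}, this gives a genuine triangle equivalence $\cd_{sg}(R)\simeq\cc^s(Q',W')$, where $(Q',W')$ is obtained from the quiver with potential $(Q,W)$ of the proof of Theorem~\ref{thm:singularity-category-of-toric-threefold} by deleting one (arbitrary) vertex $i$; this already proves the corollary.

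To identify $\cc^s(Q',W')$ with $\cc(Q',W')$ (and so recover the equivalence announced in the paragraph before the corollary) it suffices to see that $J(Q',W')$ is finite-dimensional, for then $\cd_{fg}(\Gamma(Q',W'))=\cd_{fd}(\Gamma(Q',W'))$. Put $T=e_iJ(Q,W)$; by the construction of $(Q,W)$ one has $J(Q,W)\cong\End_R(T)$ with $T$ a finitely generated reflexive $R$-module having $R\cong e_iJ(Q,W)e_i$ as a direct summand, say $T\cong R\oplus M$. Thus $A:=J(Q,W)$ is an NCR of $R$ in the sense of Section~\ref{s:relative-singularity-category} (it has global dimension $3$), with $e:=e_i$ the idempotent corresponding to the summand $R$, and Corollary~\ref{cor:dg-algebra-B-for-NCR}(b) identifies $J(Q',W')=H^0(B)$ with the stable endomorphism algebra $\underline{\End}_R(M)$. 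Since $R$ has an isolated singularity, $R_{\mathfrak p}$ is regular of Krull dimension at most $2$ for every non-maximal prime $\mathfrak p$, so the reflexive module $M_{\mathfrak p}$ is free and $\underline{\End}_{R_{\mathfrak p}}(M_{\mathfrak p})=0$; hence $\underline{\End}_R(M)$ is supported at $\m$, has finite length over $R$, and is therefore finite-dimensional over $\mathbb{C}$.

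The grading bookkeeping and the localisation argument for finite length are routine. The one point requiring genuine care is the input to Theorem~\ref{thm:idempotent-completeness-and-Hom-finiteness-of-singularity-category} — that the isolated-singularity assumption simultaneously forces the semigroup to be pointed (so the connected grading with $R^0=\mathbb{C}$ exists) and places the singular point at the irrelevant maximal ideal — together with recalling from \cite{Broomhead12} that $e_iJ(Q,W)$ is finitely generated and reflexive with $R$ as a direct summand, so that the NCR machinery of Section~\ref{s:relative-singularity-category} indeed applies; both are standard, so I expect no real obstacle.
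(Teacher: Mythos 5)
Your proof of the corollary itself is correct and essentially the paper's: the paper deduces the equivalence from Theorem~\ref{thm:singularity-category-of-toric-threefold} precisely because $\cd_{sg}(R)$ is idempotent complete, and your verification of the hypotheses of Theorem~\ref{thm:idempotent-completeness-and-Hom-finiteness-of-singularity-category} via the pointed semigroup grading just makes explicit what the paper leaves implicit.

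Your last paragraph goes beyond the statement: the paper only \emph{remarks}, without proof, that it believes $J(Q',W')$ to be finite-dimensional (so that $\cc^s(Q',W')=\cc(Q',W')$), noting this would follow if $e_iJ(Q,W)$ were maximal Cohen--Macaulay. Your localisation argument --- at every non-maximal prime $\mathfrak{p}$ the ring $R_{\mathfrak{p}}$ is regular of dimension at most $2$, a reflexive module there is free, and $\underline{\End}_R(M)$ localises to $\underline{\End}_{R_{\mathfrak{p}}}(M_{\mathfrak{p}})=0$ --- looks sound and would settle that remark using only the reflexivity supplied by \cite{Broomhead12}. One imprecision: at maximal ideals $\mathfrak{n}\neq\m$ the local ring is regular of dimension $3$, where reflexivity does not imply freeness, so what you actually obtain is that the support of $\underline{\End}_R(M)$ is zero-dimensional, not that it equals $\{\m\}$; this still gives finite length over $R$ and hence finite-dimensionality over $\mathbb{C}$, so your conclusion stands.
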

\begin{proof}
This follows immediately from Theorem~\ref{thm:singularity-category-of-toric-threefold} because $\cd_{sg}(R)$ is idempotent complete.
\end{proof}

We remark that $J(Q',W')$ is finite-dimensional under the conditions of \cite[Theorem 6.3]{AmiotIyamaReiten15} ($J(Q',W')$ is $B/\langle e_i\rangle$ there), so $\cc^s(Q',W')=\cc(Q',W')$. We believe that this is still true in the more general situation of Corollary~\ref{cor:singularity-category-for-toric-threefolds-with-isolated-singularity}, then Corollary~\ref{cor:singularity-category-for-isolated-quotient-singularities} would be a corollary of Corollary~\ref{cor:singularity-category-for-toric-threefolds-with-isolated-singularity} because all Gorenstein isolated 3D quotient singularities are toric (\cite[Theorem 1.1]{KuranoNishi12}).
If $e_iJ(Q,W)$ is maximal Cohen--Macaulay over $R$, then we can proceed as in the proof of Corollary~\ref{cor:singularity-category-for-isolated-quotient-singularities}.


\def\cprime{$'$}
\providecommand{\bysame}{\leavevmode\hbox to3em{\hrulefill}\thinspace}
\providecommand{\MR}{\relax\ifhmode\unskip\space\fi MR }
\providecommand{\MRhref}[2]{%
  \href{http://www.ams.org/mathscinet-getitem?mr=#1}{#2}
}
\providecommand{\href}[2]{#2}

\end{document}